  \newcommand{\R}{\ensuremath{\mathbb{R}}}%
  \newcommand{\Z}{\ensuremath{\mathbb{Z}}}%
  \newcommand{\N}{\ensuremath{\mathbb{N}}}%
                \newcommand{\Sbb}{\ensuremath{\mathbb{S}}}%
  \newcommand{\Sym}{\ensuremath{\operatorname{Sym}}}%
    \newcommand{\sym}{\ensuremath{\operatorname{Sym}}}%
    \newcommand{\alt}{\ensuremath{\operatorname{Alt}}}%
  \newcommand{\homeo}{\ensuremath{\operatorname{Homeo}}}%
    \newcommand{\aut}{\ensuremath{\operatorname{Aut}}}%
        \newcommand{\psl}{\ensuremath{\operatorname{PSL}}}%
\newcommand{\T}{\mathcal{T}}
	\newcommand{\autT}{\mathrm{Aut}(\T)}
\theoremstyle{definition}
  \newtheorem{defin}{Definition}[section]
  \newtheorem{question}[defin]{Question}
\theoremstyle{plain}
  \newtheorem{thm}[defin]{Theorem}
  \newtheorem{main thm}{Theorem}
  \newtheorem{prop}[defin]{Proposition}
    \newtheorem{prop-def}[defin]{Proposition-Definition}
  \newtheorem{cor}[defin]{Corollary}
   \newtheorem{lem}[defin]{Lemma}
\theoremstyle{remark}
\begin{document}

  \date{\today}	
\title{Triple transitivity and non-free actions in dimension one}

\author{Adrien Le Boudec}
\address{CNRS, UMPA - ENS Lyon, 46 all\'ee d'Italie, 69364 Lyon, France}
\email{adrien.le-boudec@ens-lyon.fr}

\author{Nicol\'as Matte Bon}
\address{CNRS,
	Institut Camille Jordan (ICJ, UMR CNRS 5208),
	Universit\'e de Lyon,
	43 blvd.\ du 11 novembre 1918,	69622 Villeurbanne,	France}
\email{mattebon@math.univ-lyon1.fr}

\thanks{The authors were supported by ANR-14-CE25-0004 GAMME and ALB was supported by a PEPS grant from CNRS}

\maketitle

%\begin{abstract}
%We show that if a group $G$ of circle homeomorphisms acts on the circle in a sufficiently non-free fashion, then every faithful and 3-transitive action of $G$ on any set must be conjugate to its action on an orbit in the circle.  As a corollary, the transitivity degree of $G$ is at most 3, and if $G$ is orientation preserving then the transitivity degree of $G$ is at most 2. Analogous results are shown for groups of automorphisms of trees whose action on the boundary is not topologically free.  This provides  exact computations of the transitivity degree of a class of groups acting on the circle and on trees, including Thompson's group $T$ and groups acting on trees with almost prescribed local action. These results give new examples of infinite groups whose transitivity degree is known, which are outside certain natural algebraically defined classes. 
%\end{abstract}

\begin{abstract}
	The transitivity degree of a group $G$ is the supremum of all integers $k$ such that $G$ admits a faithful $k$-transitive action. Few obstructions are known to impose an upper bound on the transitivity degree for infinite groups. The results of this article provide two new classes of groups whose transitivity degree can be computed, as a corollary of a classification of all $3$-transitive actions of these groups. More precisely, suppose that $G$ is a subgroup of the homeomorphism group of the circle $\homeo(\Sbb^1)$ or the automorphism group of a tree $\aut(\T)$. Under natural assumptions on the  stabilizers of the action of $G$ on $\Sbb^1$ or $\partial T$, we use the dynamics of this action to show that every faithful action of $G$ on a set that is at least $3$-transitive must be conjugate to the  action of $G$ on one of its orbits in $\Sbb^1$ or $\partial \T$.
\end{abstract}

%	\red{In the Appendix we show that if a group  satisfies a non-trivial mixed identity, then either it contains a normal subgroup isomorphic to a finitary alternating group, or  it has finite transitivity degree.}

\setcounter{tocdepth}{2}

%\tableofcontents

\section{Introduction}

An infinite permutation group $G \leq \Sym(\Omega)$ acting on a set $\Omega$ is \textbf{highly transitive} if $G$ is $k$-transitive for all $k$. Recall that \textbf{$k$-transitive} means that $G$ acts transitively on the set of ordered $k$-tuples of distinct elements of $\Omega$. If $\Sym(\Omega)$ is endowed with the pointwise convergence topology, $G$ is highly transitive if and only if $G$ is a dense subgroup of $\Sym(\Omega)$. We say that an abstract group is highly transitive if it admits a faithful action on a set $\Omega$ that is highly transitive; equivalently if it admits a dense embedding into $\Sym(\Omega)$.

The class of countable highly transitive groups is not as restricted as one could expect at first sight. Hull and Osin  showed that every countable acylindrically hyperbolic group admits a highly transitive action with finite kernel \cite{Hull-Osin}. This generalized a long list of previous results, notably about finitely generated free groups \cite{McDonough-free,Dixon-free,Olshanskii-free}, surface groups \cite{Kitroser-surface}, hyperbolic groups \cite{Chaynikov}, free products \cite{Hickin-freeprod,Gunhouse-freeprod,Moon-Stalder} or outer automorphism groups of free groups \cite{Garion-Glasner}. In a similar direction the existence of a sufficiently rich geometric or dynamical action was also exploited by Gelander and Glasner in connection with the problem of the existence of faithful primitive actions \cite{Gel-Glas-prim}.

A quite different source of examples of highly transitive groups  are topological full groups of minimal \'etale groupoids over the Cantor set. This class of groups give rise to highly transitive groups that are finitely generated and simple  \cite{Mat-SFT, Nek-simple}, including some which are amenable \cite{Ju-Mo} or have intermediate growth \cite{Nek-frag}, as well as non-amenable ones (such as Thompson's group $V$).

The \textbf{transitivity degree} $td(G)$ of an abstract group $G$ is the supremum of all $k$ such that $G$ admits a faithful $k$-transitive action. The transitivity degree is always at least $1$, and is clearly an invariant of the group. The study of multiply transitive finite permutation groups has a long history, a treatment of which can be consulted in \cite{Wielandt-book,Cameron-book,Dixon-Mortimer}. The classification of the finite simple groups led to a classification of the multiply transitive finite permutation groups, so that transitivity degrees of finite groups are completely determined (see for instance \cite{Cameron-book,Dixon-Mortimer}).

The case of infinite groups is quite different\footnote[2]{Although not directly related to the problems considered here, let us also mention that the study of sharply $k$-transitive groups is also very different for finite and infinite groups (recall that $G \leq \Sym(\Omega)$ is sharply $k$-transitive if $G$ acts freely transitively on the set of ordered $k$-tuples of distinct elements of $\Omega$). We refer to \cite[Vol. I]{Tits-oeuvres} and also to \cite{Glas-Gulk-lin,RST-2trans,Tent-3trans,Rips-Tent-2trans} for more recent developments on this topic.}. Highly transitive groups clearly have infinite transitivity degree. In general there are few obstructions that are known to impose an upper bound on the transitivity degree, and these are mostly of algebraic nature. One obstruction is the existence of non-trivial commuting normal subgroups $M,N \lhd G$. This includes for instance groups having a non-trivial abelian normal subgroup. The relevance of this property in this setting comes from the following classical result: if $G$ admits a primitive faithful action on a set $\Omega$, the actions of $M,N$ on $\Omega$ must be conjugate to  the left and right regular representation of the same group, and $M,N$ are the only minimal normal subgroups of $G$ (see e.g.\ \cite[Theorem 4.4]{Cameron-book}). This implies that $G$ has transitivity degree at most $3$ \cite[Theorem 7.2.A]{Dixon-Mortimer}, and that if $G$ has transitivity degree at least $2$ then $G$ has minimal normal subgroups. So for instance infinite residually finite solvable groups have transitivity degree $1$.  A second obstruction comes from the observation that in a $k$-transitive permutation group, the setwise stabilizer of any $k$-tuple surjects onto $\sym(k)$. It follows that $td(G) < k$ if $G$ does not admit $\sym(k)$ as a subquotient (this is used repeatedly in \cite{Hull-Osin}).  This implies for instance that any torsion group $G$ containing no element of order $k$ has $td(G)<k$, or that any group $G$ satisfying  a non-trivial law verifies $td(G)<\infty$ (since no non-trivial law is satisfied by all finite symmetric groups). Recall that mixed identities in groups are generalizations of laws (see below for definitions). It was proven in \cite{Hull-Osin} that groups satisfying  a non-trivial mixed identity cannot be highly transitive, unless they contain a group of finitary alternating permutations as a normal subgroup. We show in the appendix that actually such groups have bounded transitivity degree, where the bound depends only on the length of the mixed identity (see Proposition \ref{prop-mixid-td}).

For infinite countable groups, the aforementioned classes of groups seem to be the only ones for which an upper bound on the transitivity degree is known. The results of this paper provide two new classes of groups whose transitivity degree can be computed, via an approach of dynamical nature. Both classes are defined in terms of a one-dimensional action, namely an action either on the circle or on a simplicial tree. According to the references mentioned earlier, many groups acting on the circle or on a tree are known to be highly transitive. A crucial point  in our approach is that in both settings we require the  action  to be \emph{non-topologically free} (see below for definitions). For these two classes of groups, we obtain that the transitivity degree is always at most $3$. This bound is derived from a much more precise result, as we actually provide a complete description of all 3-transitive actions of the groups under consideration.

\subsection{Groups acting on the circle}

We denote by $\homeo(\Sbb^1)$ the group of homeomorphisms of the circle, and by $\homeo^+(\Sbb^1)$ the subgroup of index $2$ that preserves the orientation. More generally for a subgroup $G \leq \homeo(\Sbb^1)$, we write $G^+ := G \cap \homeo^+(\Sbb^1)$.

Group actions on the circle can be classified into a finite number of types according to their dynamics; see Section \ref{sec-circle} for background and definitions. The case of minimal and proximal actions is in some sense the \enquote{generic} situation. Many countable subgroups $G\le \homeo(\Sbb^1)$ whose action on $\Sbb^1$ is minimal and proximal are known to admit faithful highly transitive actions by the results mentioned above (for instance finitely generated free subgroups, lattices in $\psl(2, \R)$ acting on $\Sbb^1=\mathbb{P}^1(\R)$ by projective transformations). A common feature of these examples is that their action on $\Sbb^1$ is topologically free. Recall that an action of a group $G$ on a topological space $X$ is \textbf{topologically free} if the set of fixed points of every non-trivial element $g\in G$ has empty interior. When $G$ is a discrete group, $X$ is a compact space and the action of $G$ on $X$ is minimal, this is equivalent to saying that the stabilizer URS \cite{Gla-Wei} associated to the action of $G$ on $X$ is trivial.

We focus instead on groups whose action on $\Sbb^1$ is \emph{not} topologically free. This is equivalent to the existence of  $g \in G$, $g \neq 1$, and an open interval $I \subset \Sbb^1$ such that $g$ fixes $I$ pointwise. This assumption is obviously satisfied by various \enquote{large}  groups such as $\homeo(\Sbb^1)$, $\mathrm{Diff}^k(\Sbb^1)$ or $\mathrm{Diff}^\omega(\Sbb^1)$; but  also by some well-studied countable groups. Examples are Thompson's group $T$ (which is simple and finitely presented) and its many generalizations, or groups of piecewise projective homeomorphisms considered in \cite{Monod-Pw}. 

Our first result says that for many groups $G \leq \homeo(\Sbb^1)$ with a non topologically free action, one can completely describe all $3$-transitive faithful actions of $G$. Two actions of a group $G$ on $\Omega$ and $\Omega'$  are \textbf{conjugate} if there exists a bijective $G$-equivariant map $\Omega \to \Omega'$.

\begin{thm} \label{thm-intro-circle}
Let $G \leq \homeo(\Sbb^1)$ be a group of homeomorphisms of $\Sbb^1$ whose action on $\Sbb^1$ is minimal, proximal, and not topologically free. Assume that distinct points in $\Sbb^1$ have distinct stabilizers in $G^+$. Then for every faithful, 3-transitive action of $G$ on a set $\Omega$,  there exists a $G$-orbit $\mathcal{O}\subset \Sbb^1$ such that the action of $G$ on $\Omega$ is conjugate to the action on $\mathcal{O}$.
\end{thm}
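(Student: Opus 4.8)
The plan is to reduce the whole statement to a single assertion about the point stabilizer, and to concentrate the dynamical content there. Fix a faithful $3$-transitive action $G \acts \Omega$, a base point $\omega_0 \in \Omega$, and set $H := \stab_G(\omega_0)$. Since a $3$-transitive action is in particular primitive, $H$ is a \emph{maximal} subgroup of $G$, and distinct points of $\Omega$ have distinct stabilizers (a subgroup fixing two points cannot be transitive on the complement of one of them, and here $\Omega$ is infinite because $G$ is). I claim it then suffices to prove that $H$ fixes a point of $\Sbb^1$. Indeed, if $H \subseteq \stab_G(x_0)$ for some $x_0 \in \Sbb^1$, then since the $G$-action on $\Sbb^1$ is minimal and $\Sbb^1$ is infinite we have $\stab_G(x_0) \neq G$, so maximality of $H$ forces $H = \stab_G(x_0)$. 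The map $g\omega_0 \mapsto g x_0$ is then a well-defined $G$-equivariant bijection from $\Omega$ onto the orbit $\O := G x_0$: it is well defined and injective precisely because $g^{-1}g' \in H = \stab_G(x_0)$ is equivalent to $g x_0 = g' x_0$, and it is onto $\O$ by construction. This is exactly a conjugacy between the two actions, so everything reduces to producing an $H$-fixed point on the circle.

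The heart of the argument is thus to show that the maximal subgroup $H$ fixes a point of $\Sbb^1$, and this is where the hypotheses enter. Because the action on $\Sbb^1$ is not topologically free, there is a nontrivial $g$ fixing an open arc pointwise; using minimality and proximality to conjugate and shrink its support, the rigid stabilizers $\rist_G(I)$ of small arcs $I$ are nontrivial, and the normal subgroup $N \lhd G$ generated by all arc-supported elements is nontrivial. As a nontrivial normal subgroup of a primitive permutation group, $N$ acts transitively on $\Omega$; in particular $N \not\subseteq H$. The idea is to use these micro-supported elements as intrinsic probes. On the circle, two elements with supports in disjoint arcs commute, and $\{a,c\}$ separates $\{b,d\}$ exactly when $b$ and $d$ lie in different components of $\Sbb^1 \setminus \{a,c\}$; recording, for four distinct points of $\Omega$, whether the relevant two-point stabilizers and the arc-rigid-stabilizers they contain commute or fix the remaining points, I would reconstruct a $G$-invariant separation relation on $\Omega$ as the combinatorial shadow of this picture. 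Here $3$-transitivity (rather than mere $2$-transitivity) is what makes this quaternary relation reconstructible and homogeneous, which is also why the resulting bound on the transitivity degree is exactly $3$.

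Having equipped $\Omega$ with an abstract separation relation, the next step is to match it with the genuine circle. Using minimality and proximality of the $G$-action (proximality supplies the contractions needed to identify orbits and to exclude invariant Cantor sets or finite $H$-orbits), together with the hypothesis that distinct points of $\Sbb^1$ have distinct stabilizers in $G^+$ (which guarantees that a point of $\Sbb^1$ is determined by its stabilizer, so the reconstruction lands on single points rather than on fibers), one produces a $G$-equivariant map from $\Omega$ into $\Sbb^1$. Evaluated at $\omega_0$ this yields the desired $x_0$ with $H \subseteq \stab_G(x_0)$, and the reduction above then closes the proof.

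The main obstacle is precisely this transfer from the abstract permutation combinatorics of $\Omega$ to the dynamics on $\Sbb^1$: one must show that the group-theoretically reconstructed separation data genuinely agree with the circle's, i.e.\ that $H$ has an honest fixed point and not merely an invariant finite set or minimal Cantor set. I expect the delicate points to be (i) characterizing the arc-rigid-stabilizers $\rist_G(I)$ intrinsically inside $G$ (a Rubin-type reconstruction issue), so that the separation relation is canonical and $G$-invariant, and (ii) ruling out the degenerate possibilities for the $H$-action on $\Sbb^1$, which is exactly where proximality and the distinct-stabilizers hypothesis become indispensable. The clean reduction in the first paragraph means that all of the real difficulty is concentrated in this single fixed-point existence statement.
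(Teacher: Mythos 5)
Your first paragraph is correct and matches the paper's starting point: by primitivity the point stabilizer $H = G_{\omega_0}$ is maximal, so if $H$ fixes some $x_0 \in \Sbb^1$ then $H = \stab_G(x_0)$ (the latter is proper by minimality) and the orbit map $g\omega_0 \mapsto gx_0$ is the desired conjugacy. But from that point on the proposal is a plan, not a proof, and the gap is exactly the theorem itself. The claim that $H$ fixes a point of $\Sbb^1$ is never established: the ``$G$-invariant separation relation'' on $\Omega$ is never defined, no lemma about it is stated or proved, and the passage from it to an equivariant map $\Omega \to \Sbb^1$ is asserted in a single sentence (``one produces\dots''). You yourself flag the two missing ingredients --- an intrinsic (Rubin-type) characterization of the arc-rigid stabilizers, and the exclusion of finite invariant sets and invariant Cantor sets for the $H$-action --- and these are precisely the dynamical content that the theorem demands; deferring them means the core of the argument is absent. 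It is also not clear the proposed route is viable as stated: a Rubin-type reconstruction typically needs hypotheses on rigid stabilizers (e.g.\ that they act without global fixed points on the arcs, or some local transitivity) that are not available here, and nothing in the proposal substitutes for them.

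For comparison, the paper does not reconstruct the circle from $\Omega$; it runs a case analysis on the dynamical type of the stabilizers $G_\omega$ and $G_\Delta$ acting on $\Sbb^1$. First (Proposition \ref{prop-S1}) one shows that if the action on $\Omega$ is not already conjugate to an orbit action, then every $G_\omega$ acts minimally on $\Sbb^1$; the tool here is the Margulis--Ghys theorem (Theorem \ref{t-Margulis}), which detects failure of proximality through the centralizer in $\homeo^+(\Sbb^1)$. Next, the distinct-stabilizers hypothesis enters only in Proposition \ref{prop-st2-min-ptfix}, to show that each pair stabilizer $G_\Delta$ then has a unique fixed point $z(\Delta)$ and acts minimally on its complement, with $G_\omega$ proximal. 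Finally --- and this is the real heart, which your sketch misjudges by locating the difficulty in excluding Cantor sets and finite orbits --- even this remaining configuration must be killed: Proposition \ref{prop-st2-ptfix-contr} derives a contradiction by studying the fibers of $\Delta \mapsto z(\Delta)$, which form a $G$-invariant partition of $\Omega$ into blocks of size $2$, and by playing elements supported in disjoint arcs (which commute, Lemma \ref{lem-commutations-S1}) against $3$-transitivity. None of these steps, nor any substitute for them, appears in your proposal, so what you have is a correct reduction plus an honest description of an open problem rather than a proof.
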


That distinct points of the circle have distinct stabilizers in $G^+$ means that the map on $\Sbb^1$ defined by $z \mapsto G_z^+$ is injective. For non topologically free actions, this assumption does not seem to be very restrictive. It is satisfied in all the examples of groups mentioned earlier (and we actually do not know if it follows automatically from the other assumptions in the theorem).

Recall that no group $G \leq \homeo(\Sbb^1)$ can act $4$-transitively on an orbit in $\Sbb^1$, and if $G$ preserves the orientation then $G$ cannot act $3$-transitively on any of its orbits in $\Sbb^1$. Hence, since any $k$-transitive action with $k \geq 3$ is obviously $3$-transitive, it is an immediate corollary of Theorem \ref{thm-intro-circle} that every group $G$ as in the statement satisfies $td(G)\leq 3$. It turns out that, in the course of the proof, we can actually obtain this result without the assumption on point stabilizers.

\begin{thm} \label{thm-bis-intro-circle}
Let $G \leq \homeo(\Sbb^1)$ be a group of homeomorphisms of $\Sbb^1$ such that the action of $G$ on $\Sbb^1$ is minimal, proximal, and not topologically free. Then the transitivity degree of $G$ is at most $3$; and if $G \leq \homeo^+(\Sbb^1)$, then the transitivity degree of $G$ is at most $2$.
\end{thm}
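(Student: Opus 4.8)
The plan is to argue by contradiction, reducing both statements to the construction of a single non-constant $G$-equivariant map to the circle. Suppose that $G$ admits a faithful $4$-transitive action on a set $\Omega$ (in the general case), or that $G\le\homeo^+(\Sbb^1)$ admits a faithful $3$-transitive action on $\Omega$ (in the orientation-preserving case); I seek a contradiction. In either case the action is at least $2$-transitive, hence primitive, so for a base point $\omega\in\Omega$ the stabilizer $H:=G_\omega$ is a maximal subgroup and $\Omega\cong G/H$ as $G$-sets. The elementary but decisive remark is that $G$-equivariant maps $G/H\to\Sbb^1$ correspond bijectively to fixed points of $H$, the point $z_0\in\fix_{\Sbb^1}(H)$ giving the map $\phi\colon gH\mapsto gz_0$. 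Thus everything reduces to showing that $H$ fixes a point of $\Sbb^1$.

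Granting such a fixed point $z_0$, here is the endgame. Since $G$ acts minimally on $\Sbb^1$ it has no global fixed point, so the associated map $\phi\colon\Omega\to\Sbb^1$ is non-constant. A non-constant $G$-equivariant map out of a $2$-transitive action is automatically injective: were $\phi(\alpha)=\phi(\beta)$ for some $\alpha\ne\beta$, then $2$-transitivity would carry this pair onto an arbitrary pair, forcing $\phi$ to be constant. Hence $\phi$ is injective and $\phi(\Omega)$ is an infinite $G$-invariant subset of $\Sbb^1$. Now pull back the cyclic structure. In the general case, the predicate on ordered $4$-tuples asserting that $\{a,c\}$ and $\{b,d\}$ separate each other on $\Sbb^1$ is preserved by all of $\homeo(\Sbb^1)$ and takes both truth values on $\phi(\Omega)$; via injectivity of $\phi$ it pulls back to a non-trivial $G$-invariant relation on $4$-tuples of $\Omega$, contradicting $4$-transitivity, so $td(G)\le 3$. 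In the orientation-preserving case, the cyclic orientation of ordered triples of distinct points takes both values and is preserved by $\homeo^+(\Sbb^1)$; the same pull-back contradicts $3$-transitivity, so $td(G)\le 2$. This recovers the facts recalled just before the statement, that no orbit in $\Sbb^1$ supports a $4$-transitive (resp.\ orientation-preserving $3$-transitive) action.

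The crux, and the step I expect to be the main obstacle, is the production of the fixed point $z_0$; this is exactly where the dynamical hypotheses enter, and it coincides with the first part of the proof of Theorem \ref{thm-intro-circle} (which uses $3$-transitivity, available in both cases). Non-topological freeness supplies non-trivial elements supported in proper arcs, that is, non-trivial rigid stabilizers $\rist_G(I)$ and germ subgroups; minimality and proximality allow one to position such supports inside arbitrarily small arcs anywhere on $\Sbb^1$, which is what makes the stabilizer URS of the action on $\Sbb^1$ well-behaved. I would then compare the URS defined by the conjugacy class of the maximal subgroup $H$ with this stabilizer URS: maximality prevents $H$ from absorbing too many rigid stabilizers without contradicting faithfulness, and proximality excludes the remaining configurations, so that $H$ is forced to localize at a single point.

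Finally, regarding the extra hypothesis of Theorem \ref{thm-intro-circle}: that assumption is used only to guarantee that the localization takes place in $\Sbb^1$ itself, in which case the injectivity argument above even yields $H=G_{z_0}$ and hence the full conjugacy. For the weaker conclusion sought here it suffices to produce a non-constant $G$-equivariant map from $\Omega$ to \emph{some} circle on which $G$ acts by homeomorphisms, namely the $G$-invariant quotient of $\Sbb^1$ collapsing the closed relation $z\sim w\iff G_z^+=G_w^+$; the localization argument furnishes a fixed point of $H$ there without any assumption, and since $G$ still acts minimally by homeomorphisms on this quotient circle the separation and orientation invariants used in the endgame remain available verbatim. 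The bounds on $td(G)$ therefore follow with no hypothesis on point stabilizers.
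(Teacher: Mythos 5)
Your endgame is sound: if the point stabilizer $H=G_\omega$ fixes some $z_0\in\Sbb^1$, the orbit map is non-constant by minimality, injective by $2$-transitivity, and pulling back the separation relation on $4$-tuples (resp.\ the cyclic orientation on triples, when $G\leq\homeo^+(\Sbb^1)$) contradicts $4$-transitivity (resp.\ $3$-transitivity). But this is exactly the elementary observation recalled in the paper before the statement; all the content has been pushed into your ``crux'', and there you give no argument — only the assertion that a comparison of the URS of $H$ with the stabilizer URS, plus ``maximality prevents $H$ from absorbing too many rigid stabilizers'', forces $H$ to localize at a point. That step is the entire theorem: in the paper it occupies essentially the whole of Section \ref{sec-circle} (Proposition \ref{prop-S1} via the Margulis--Ghys Theorem \ref{t-Margulis}, Lemmas \ref{lem-stab-NTF} and \ref{l-Gdelta-proximal}, the commutation Lemma \ref{lem-commutations-S1}, and Proposition \ref{p-Gdelta-not-minimal}). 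Moreover your intermediate goal points in the wrong direction: for orientation-preserving $G$, Proposition \ref{prop-S1} shows that under the assumed $3$-transitive action \emph{every} point stabilizer acts minimally on $\Sbb^1$, hence fixes nothing, and the contradiction is reached not by localizing $H$ but by applying the commutation identity to two-point stabilizers (Proposition \ref{p-Gdelta-not-minimal}). So ``$H$ fixes a point'' is not an intermediate statement one can establish; it could only be deduced after the contradiction has already been obtained by other means.

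The device you propose for removing the distinct-stabilizer hypothesis — quotienting $\Sbb^1$ by the relation $z\sim w\iff G_z^+=G_w^+$ — is also unjustified. The stabilizer map $z\mapsto G_z^+$ is not continuous, so this relation need not be closed; even if it were, its classes need not be points and arcs forming a monotone decomposition, so the quotient need not be a topological circle; and even granting a circle quotient with a minimal $G$-action, two distinct classes may still have equal stabilizers for the quotient action, so the hypothesis you set out to remove has not been recovered. Note that the paper explicitly says it is unknown whether that hypothesis follows from the other assumptions, and its proof of Theorem \ref{thm-bis-intro-circle} is engineered precisely to avoid producing a fixed point: the orientation-preserving case follows from the sharper final statement of Proposition \ref{prop-S1} together with Proposition \ref{p-Gdelta-not-minimal}, and the general case reduces to it by observing that if $G$ were $4$-transitive on $\Omega$, then $G_\Delta$ acts primitively on $\Omega\setminus\Delta$, so its non-trivial normal subgroup $G_\Delta^+$ is transitive there, whence $G^+$ is $3$-transitive.
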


In several motivating examples, the group $G$ admits an orbit in $\Sbb^1$ on which it acts $2$ or $3$-transitively. In these situations Theorem \ref{thm-bis-intro-circle} allows to determine exactly the transitivity degree of $G$.  This is the case, for instance, for the entire group of homeomorphisms or diffeomorphisms of $\Sbb^1$. An example where $G$ is finitely generated is given by the following:

\begin{cor} \label{cor-intro-S1-T}
The transitivity degree of Thompson's group $T$ is equal to $2$, and the transitivity degree of  Thompson's group with flips  $T^{\pm}$ is equal to $3$.
\end{cor}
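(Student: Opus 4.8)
The plan is to derive both equalities by pairing the upper bounds furnished by Theorem \ref{thm-bis-intro-circle} with explicit faithful multiply transitive actions realizing the matching lower bounds. I would first recall the standard model in which $T$ is the group of orientation-preserving dyadic piecewise-linear homeomorphisms of $\Sbb^1 = \R/\Z$. In this model the action of $T$ on $\Sbb^1$ is minimal and proximal---these are classical facts about its dynamical type, the point being that arcs with dyadic endpoints form a basis and can be carried inside any prescribed nonempty open arc---and it is not topologically free, since $T$ contains nontrivial elements supported on a proper dyadic subinterval, which fix the complementary arc pointwise. Because $T^{\pm}$ contains $T$, its action on $\Sbb^1$ automatically inherits minimality, proximality, and non-topological freeness.

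With the hypotheses verified, Theorem \ref{thm-bis-intro-circle} applies and gives the two upper bounds at once: since $T \leq \homeo^+(\Sbb^1)$ we get $td(T) \leq 2$, and since $T^{\pm} \leq \homeo(\Sbb^1)$ we get $td(T^{\pm}) \leq 3$. For the lower bounds I would work with the faithful action of each group on the dense orbit $\O = \Z[1/2]/\Z$ of dyadic points---faithful because a homeomorphism fixing a dense set is the identity. The one genuinely needed combinatorial input is that $T$ acts transitively on positively cyclically-ordered triples of distinct dyadic points: two such triples cut $\Sbb^1$ into corresponding arcs with dyadic endpoints, and one interpolates arc-by-arc by orientation-preserving dyadic piecewise-linear maps to assemble the required element of $T$. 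Completing any two prescribed ordered pairs to positively-oriented triples, this already shows that $T$ is $2$-transitive on $\O$, so $td(T) \geq 2$ and hence $td(T) = 2$.

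To finish, I would upgrade this to full $3$-transitivity of $T^{\pm}$ using the orientation-reversing elements. Given two triples of distinct dyadic points of opposite cyclic orientation, I would apply a dyadic orientation-reversing flip $\sigma \in T^{\pm}$ (for instance $x \mapsto 1-x$, which preserves $\O$) to one of them so that the two triples acquire the same orientation, and then match them by an element of $T$ using the transitivity above; composing yields the desired element of $T^{\pm}$. Thus $T^{\pm}$ is $3$-transitive on $\O$, giving $td(T^{\pm}) \geq 3$ and therefore $td(T^{\pm}) = 3$. The substantive work, beyond invoking Theorem \ref{thm-bis-intro-circle}, lies entirely in this transitivity analysis on $\O$; I expect the transitivity of $T$ on same-oriented triples to be the main step, with the flip trick then being a short formal addition, while everything else is either classical dynamics of Thompson's group or an immediate consequence.
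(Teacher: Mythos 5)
Your proposal is correct and follows essentially the same route as the paper: the upper bounds come from Theorem \ref{thm-bis-intro-circle} applied to the minimal, proximal, non-topologically free actions on $\Sbb^1$, and the lower bounds come from the $2$-transitive (resp.\ $3$-transitive) actions of $T$ (resp.\ $T^{\pm}$) on the dyadic rationals. The only difference is that you spell out the transitivity-on-dyadic-triples argument and the flip trick, which the paper dismisses as ``well-known and easy to see.''
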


We refer to \cite{CFP} for an introduction to the Thompson groups $F,T$ and $V$. Recall the group $V$ is a group of homeomorphisms of the Cantor set, and $V$ acts highly transitively on its orbits in the Cantor set. In the above statement $T^{\pm}$ is the subgroup of $\homeo(\Sbb^1)$ that admits the same description as the group $T$ (see \S \ref{subsec:ThompsonT}), except that $T^{\pm}$ is allowed to reverse the orientation. It contains the group $T$ as a subgroup of index $2$.

In Section \ref{sec-line} we consider the case of groups acting on the real line. We show that if $G \leq \homeo^+(\mathbb{R})$ acts on $\mathbb{R}$ with no fixed points and $G$ contains non-trivial elements with compact support, then the transitivity degree of $G$ is at most $2$ (Proposition \ref{prop-line-td2}). This bound is a priori not as optimal as the one from Theorem \ref{thm-bis-intro-circle}, and we do not know whether this could be improved from $2$ to $1$. An example to which Proposition \ref{prop-line-td2} applies is Thompson's group $F$ (Corollary \ref{cor-line-F}).

\subsection{Groups acting on trees}

The second class of groups under consideration are defined in terms of an action on a tree. In the sequel $\T$ is a simplicial tree, and we denote by $\aut(\T)$ its group of automorphisms, and by $\aut(\T)^+$ the subgroup of $\aut(\T)$ that preserves the natural bipartition of the vertex set of $\T$. The group $\aut(\T)^+$ has index at most $2$ in $\aut(\T)$.

The class of groups acting on trees $G \leq \autT$ is also known to be a source of highly transitive groups; see \cite{Fim-Moon-Stal} and the references mentioned above about free groups and free products. In all these examples the action of $G$ on the boundary of the tree $\partial T$ is topologically free. For free groups and free products this is clear since the stabilizer of an edge in the associated tree is trivial, and for the examples from \cite{Fim-Moon-Stal} this follows from the condition that is imposed there on stabilizers of edges \cite[Def.\ 1.1]{Fim-Moon-Stal}.

As in the case of the circle,  we focus instead on  subgroups $G \leq \autT$ whose action on $\partial \T$ is \emph{not} topologically free. In this setting this is equivalent to  the existence of  a half-tree $A \subset \T$ (see Section \ref{sec-trees} for the terminology) and $g \in G$, $g \neq 1$, such that $g$ acts trivially on $A$. Uncountable groups with this property include the group $\aut(\T_d)$, or the Burger-Mozes universal group $U(F)$ with local action prescribed by a finite permutation group $F \leq \Sym(d)$ acting non-freely on $d$ letters \cite{BM-IHES}. Countable examples, and actually finitely generated, include the groups $G(F,F')$ with almost prescribed local action studied in  \cite{LB-ae}, where $F \lneq F' \leq \Sym(d)$ and the permutation group $F$ acts freely on $d$ letters. Other countable examples can be obtained as piecewise prescribed groups as in \cite[Section 4]{LB-cs}.

The second main result of the article says that in this setting we have the exact same phenomena as in Theorem \ref{thm-intro-circle}:

\begin{thm} \label{thm-intro-tree}
Suppose that the action of $G \leq \autT$ on $\T$ is minimal and of general type, and that the action of $G$ on $\partial \T$ is not topologically free. Assume that $G$ acts faithfully and 3-transitively on a set $\Omega$. Then there exists a $G$-orbit $\mathcal{O}\subset \partial \T$ such that the actions of $G$ on $\Omega$ and on $\mathcal{O}$ are conjugate.
\end{thm}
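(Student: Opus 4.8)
Since $\Omega$ is a transitive $G$-set, to prove the theorem it is enough to exhibit an end $\xi \in \partial\T$ with $G_\omega = G_\xi$ for some $\omega \in \Omega$: then $g\omega \mapsto g\xi$ is a well-defined $G$-equivariant bijection of $\Omega$ onto the orbit $\mathcal{O} = G\xi \subseteq \partial\T$. Fix $\omega \in \Omega$ and put $H = G_\omega$. By $2$-transitivity the action on $\Omega$ is primitive, so $H$ is a maximal subgroup; by faithfulness $H$ is core-free. Since the action on $\T$ is of general type, $G$ fixes no end, so $G_\xi \lneq G$ for every $\xi \in \partial\T$; hence if $H \leq G_\xi$, maximality of $H$ forces $H = G_\xi$. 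The entire statement therefore reduces to showing that the point stabilizer $H$ fixes some end of $\T$.

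First I would upgrade non-topological freeness to micro-support. The hypothesis provides a half-tree $A_0$ and a nontrivial element supported in $A_0$, that is, $\rist_G(A_0) \neq 1$. Choosing a hyperbolic element $t \in G$ (which exists as the action is of general type) whose attracting end lies in the shadow of $A_0$ and conjugating by high powers, $\rist_G(t^n A_0 t^{-n}) = t^n \rist_G(A_0) t^{-n}$ are nontrivial rigid stabilizers of arbitrarily small half-trees around that end. Minimality of $G \acts \partial\T$ makes the orbit of this end dense, and translating by $G$ yields $\rist_G(B) \neq 1$ for every half-tree $B$. In particular the subgroup $N = \langle \rist_G(B) : B \text{ a half-tree} \rangle$ is nontrivial, and it is normal since $g\,\rist_G(B)\,g^{-1} = \rist_G(gB)$.

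The heart of the proof is to locate the end fixed by $H$. I would consider the family $\mathcal{A}_H = \{ B \text{ half-tree} : \rist_G(B) \not\leq H \}$ and show that the shadows of its members intersect in a single end $\xi$. Three points must be established. (a) Non-emptiness: if $\mathcal{A}_H = \varnothing$ then $\rist_G(B) \leq H$ for all $B$, so $N \leq H$; but then the nontrivial normal subgroup $N$ lies in the core of $H$, contradicting core-freeness. (b) Unidirectionality: if $B_1, B_2$ are disjoint half-trees, they are not both in $\mathcal{A}_H$. Here $\rist_G(B_1)$ and $\rist_G(B_2)$ commute, and combining this with the maximality of $H$ and a double-commutator computation inside the rigid stabilizers one shows that $\rist_G(B_1) \not\leq H$ forces $\rist_G(B_2) \leq H$; this is the step where the full strength of $3$-transitivity, together with the proximal, general-type dynamics that contract half-trees toward a single end, enters. (c) Single point: by (b) the shadows of members of $\mathcal{A}_H$ are pairwise non-disjoint, so by compactness of $\partial\T$ they have a common end, and if they had two distinct common ends one could separate them by disjoint half-trees lying in $\mathcal{A}_H$, contradicting (b); hence the intersection is a single end $\xi$.

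Finally, for $h \in H$ one has $B \in \mathcal{A}_H \iff h\,\rist_G(B)\,h^{-1} = \rist_G(hB) \not\leq hHh^{-1} = H \iff hB \in \mathcal{A}_H$, so $H$ permutes $\mathcal{A}_H$ and therefore fixes its unique common end $\xi$; thus $H \leq G_\xi$, and by the reduction above $H = G_\xi$, which completes the proof. The main obstacle is step (b): promoting a single non-containment $\rist_G(B_1) \not\leq H$ to containment of all rigid stabilizers of half-trees disjoint from $B_1$, thereby forcing $\mathcal{A}_H$ to shrink to one end rather than to a larger closed set. This is precisely where $3$-transitivity and the general-type assumption must be combined, presumably through the commuting structure of the rigid stabilizers and the compression of half-trees by hyperbolic elements.
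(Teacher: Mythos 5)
Your global skeleton (reduce to showing that $H=G_\omega$ fixes an end, via maximality and core-freeness; step (a); the final equivariance argument) is sound and matches the endgame of the paper's proof, but the proposal has a fatal gap at step (b), and (b) is not a technical step: it is the whole theorem. Nothing in the hypotheses rules out a priori that $G_\omega$ acts on $\T$ with general type; excluding that possibility is precisely what the body of the paper's proof does. The paper first shows (Lemma \ref{lem-class-ntrans-type}) that $G_\omega$ acts minimally on $\T$ and is either focal or of general type, and then spends Lemma \ref{lem-trees-GDelta-gen} and Proposition \ref{prop-trees-GDelta-focal} excluding the general-type alternative by analyzing the two-point stabilizers $G_\Delta$. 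Moreover, if $G_\omega$ were of general type, your set $\mathcal{A}_H$ would consist of \emph{all} half-trees: by the paper's Lemma \ref{lem-stab-orbits-size3}, under that hypothesis every rigid stabilizer $\rist_G(B)=G_B$ has all its orbits on $\Omega$ of cardinality at least $3$, so $\rist_G(B)\not\leq H$ for every half-tree $B$, and (b) fails outright. Hence any proof of (b) must already contain a proof that point stabilizers cannot be of general type, and in the paper this requires two genuinely different arguments: the commutation identity of Lemma \ref{lem-mix-id}, applied to three disjointly supported elements moving a triple $\omega_1,\omega_2,\omega_3$, to rule out $G_\Delta$ of general type; and the block-partition/germ analysis of Proposition \ref{prop-trees-GDelta-focal} to rule out $G_\Delta$ focal. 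Your gesture at \enquote{a double-commutator computation inside the rigid stabilizers} engages with neither; in particular the focal case, in which two-point stabilizers fix ends while point stabilizers do not, is invisible to the $\mathcal{A}_H$ formalism, and it is where most of the paper's work lies.

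Secondarily, step (c) is incorrect as a deduction even if (b) were granted. What (b) gives is that members of $\mathcal{A}_H$ pairwise intersect \emph{as subtrees}, which does not imply that their shadows intersect: two half-trees can overlap in a finite band and have disjoint sets of ends. Even pairwise-intersecting shadows need not share an end, since shadows of half-trees fail the Helly property (at a vertex of degree $3$, the three half-trees pointing toward the vertex have pairwise intersecting shadows but empty triple intersection), and compactness of $\partial\T$ is unavailable because $\T$ is not assumed locally finite. So the topological step would need an actual argument rather than a citation of compactness. These defects might be repairable, but only after the real work --- excluding general-type point stabilizers --- has been carried out, and that work is absent from the proposal.
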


After the first version of this article appeared, a converse of this theorem was proven in \cite{FLMMS}, namely that if $G\le \aut(\T)$ is minimal and of general type and the action of $G$ on $\partial \T$ \emph{is}  topologically free, then the group $G$ is highly transitive.

Regular trees share with the circle $\Sbb^1$ the property of being a source of examples of actions that are either $3$-transitive but not $4$-transitive, or $2$-transitive but not $3$-transitive. Indeed, it is well-known that for $d \geq 3$ the group $\aut(\T_d)$ acts $3$-transitively on $\partial \T_d$, and that no subgroup $G \leq \aut(\T_d)$ can act $4$-transitively on any of its orbits in $\partial \T_d$. Similarly $\aut(\T_d)^+$ acts $2$-transitively on $\partial \T_d$, and no subgroup $G \leq \aut(\T_d)^+$ acting minimally on $T_d$ can act $3$-transitively on some orbit in $\partial \T_d$. So as in the case of the circle, the following is a direct consequence of the theorem:

\begin{cor} \label{cor-intro-tree}
Let $G \leq \autT$ as in Theorem \ref{thm-intro-tree}. Then the transitivity degree of $G$ is at most $3$. If in addition $\T = \T_d$ is a regular tree and $G \leq \aut(\T_d)^+$, then the transitivity degree of $G$ is at most $2$.
\end{cor}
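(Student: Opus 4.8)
The plan is to deduce both bounds from Theorem~\ref{thm-intro-tree} by showing that the action of $G$ on any orbit $\mathcal{O}\subset\partial\T$ is never $4$-transitive in general, and never $3$-transitive when $\T=\T_d$ and $G\le\aut(\T_d)^+$. Indeed, if $G$ admitted a faithful $k$-transitive action with $k\ge 3$, that action would in particular be $3$-transitive, hence by Theorem~\ref{thm-intro-tree} conjugate to the action on some orbit $\mathcal{O}\subset\partial\T$; so it suffices to bound the transitivity of boundary-orbit actions. A preliminary observation, used throughout, is that since the action of $G$ on $\T$ is minimal and of general type, the induced action on $\partial\T$ is minimal, so every orbit $\mathcal{O}$ is dense in $\partial\T$.

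For the first bound I would use the canonical combinatorial type of a quadruple of distinct ends. Given four distinct boundary points, their convex hull is a finite subtree with four ends and at most two branch vertices; in the generic (non-degenerate) case there are two distinct branch vertices joined by a central segment, and the four ends split into two pairs according to which branch vertex they hang from. This pairing is manifestly invariant under all of $\aut(\T)$. Since $\mathcal{O}$ is dense and the set of quadruples realizing a non-degenerate configuration is open and non-empty in $(\partial\T)^4$, the orbit $\mathcal{O}$ contains such a quadruple $\{a,b,c,d\}$ with pairing $\{\{a,b\},\{c,d\}\}$. No element of $G$ can then carry the ordered tuple $(a,b,c,d)$ to $(a,c,b,d)$, as this would force it to send the canonical pairing of $\{a,b,c,d\}$ to $\{\{a,c\},\{b,d\}\}$ while fixing the set setwise. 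Hence the action on $\mathcal{O}$ is not $4$-transitive, and $td(G)\le 3$.

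For the refined bound I would exploit the bipartition preserved by $\aut(\T_d)^+$. Any three distinct ends determine a unique median vertex (the centre of the tripod they span), and since $G\le\aut(\T_d)^+$ preserves the two-colouring of the vertex set, the colour of this median vertex is a $G$-invariant of (unordered) triples of ends. Because $d\ge 3$, any prescribed vertex $v$ arises as the median of three ends lying in three distinct subtrees hanging from $v$, so both colours occur among medians; by density of $\mathcal{O}$ both colours are realised by triples drawn from $\mathcal{O}$. Triples with medians of different colours therefore lie in distinct $G$-orbits, so the action on $\mathcal{O}$ is not $3$-transitive, and combined with Theorem~\ref{thm-intro-tree} this yields $td(G)\le 2$.

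The conjugacy statement of Theorem~\ref{thm-intro-tree} does the essential work, so the remaining difficulty is purely geometric bookkeeping: verifying that the pairing and median-colour invariants are genuinely $\aut(\T)$-invariant, and, this is the only delicate point, ensuring that a single dense orbit $\mathcal{O}$ simultaneously contains configurations of both relevant types. The latter is where minimality of the boundary action (to obtain density) and the branching $d\ge 3$ (to realise both median colours and a non-degenerate quadruple) are used; the degenerate configurations, in which the convex hull has a single branch vertex, must be avoided, but they form a nowhere dense set and are harmless by the openness arguments above.
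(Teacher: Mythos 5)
Your proposal is correct, and its skeleton (reduce via Theorem~\ref{thm-intro-tree} to showing that the action on a boundary orbit $\mathcal{O}\subset\partial\T$ is never $4$-transitive, and never $3$-transitive in the $\aut(\T_d)^+$ case) is exactly the paper's; what differs are the two geometric obstructions. For $td(G)\le 3$ the paper argues that the stabilizer of three points of $\mathcal{O}$ fixes the centre $v$ of the tripod they span, hence acts by isometries of a visual metric on $\partial\T$ based at $v$, and so cannot be transitive on the remaining points of $\mathcal{O}$; you instead use the canonical pairing attached to a non-degenerate quadruple of ends, a purely combinatorial invariant that distinguishes the ordered tuples $(a,b,c,d)$ and $(a,c,b,d)$ and dispenses with metrics altogether. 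For $td(G)\le 2$ the paper takes the stabilizer of two points of $\mathcal{O}$, notes that it preserves the geodesic $L$ joining them and acts on $L$ by translations of even length, and picks two points of $\mathcal{O}$ projecting to adjacent vertices of $L$; your colour-of-the-median argument is the same bipartition/parity obstruction, packaged as an invariant of unordered triples rather than as a statement about a two-point stabilizer, which is arguably cleaner. Both routes need an existence/density input: the paper invokes it tersely (\enquote{such points exist by the assumptions that the tree is regular and the action is minimal}), whereas you derive it from minimality of the $G$-action on $\partial\T$ together with openness/local constancy of the configuration type and of the median map. That minimality of the boundary action is indeed a consequence of minimality and general type of the action on $\T$: the tree is then the union of the axes of hyperbolic elements, which forces the limit set to be all of $\partial\T$. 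The only points left implicit in your write-up --- this last fact, and the observation that a minimal general-type tree has at least two distinct branch vertices (e.g.\ the extremities of the bridge, or of the overlap, between two axes without common endpoints), so that non-degenerate quadruples exist --- are routine and easily supplied.
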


Corollary \ref{cor-intro-tree} allows to determine the transitivity degree of some of the groups mentioned above, for instance some groups in the families $G(F,F')$ and $G(F,F')^+$. In view of the simplicity results of \cite[Section 4]{LB-ae}, this gives examples of infinite finitely generated simple groups of transitivity degree $2$ (compare with Corollary \ref{cor-intro-S1-T}).

\subsection*{Outline of proofs and organization} 

The proofs of Theorems \ref{thm-intro-tree} and \ref{thm-intro-circle} share a common global structure. Isometric group actions on trees are classified into a finite number of types, according to the existence of hyperbolic elements and of a fixed point or a fixed pair in the boundary (see below). Let $G$ be a group as in Theorem \ref{thm-intro-tree}. Given a  3-transitive action of $G$ on a set $\Omega$ and a finite set of points $F = \left\{ \omega_1,\ldots, \omega_n\right\} \subset \Omega$, we study the type of the action of the stabilizer $G_F$ on the tree. We go through a discussion that considers all possible types for $n=1$ and $n=2$, and we show that the only possibility is that the stabilizer of a point in $\Omega$ fixes a point in $\partial T$. In the case of groups acting on the circle, a similar study is carried out, but the above classification is replaced by the classical trichotomy about minimal invariant subsets, together with the use of a theorem of Margulis \cite{Mar-Tits} and Ghys \cite{Ghys-circ}, which characterizes {proximality} of a group $G\subset\homeo_+(\Sbb^1)$  in terms of its {centralizer} in $\homeo_+(\Sbb^1)$ (see Theorem \ref{t-Margulis}).

In both situations, one stage of the proof makes use of a certain first order formula that is satisfied by $G$, which roughly speaking expresses that certain commutations hold inside $G$. This can be reinterpreted in terms of mixed-identities (see below for the terminology). Although these formulas are not quite the same in the case of the tree and of the circle (compare Lemma \ref{lem-mix-id} and Lemma \ref{lem-commutations-S1}), in both cases they essentially come from the fact that trees and circles are one-dimensional objects. Moreover the information that we extract from them and that is sufficient for our purpose is actually the same in the two situations (see the universal formula (\ref{eq-identity})). Note that the existence of a non-trivial mixed-identity is already enough to ensure that the group has finite transitivity degree (see Proposition \ref{prop-mixid-td}),  but obtaining Theorems 1.1 and 1.4 requires more work.

 Despite having these similarities, the proofs of Theorems \ref{thm-intro-circle} and \ref{thm-intro-tree} locally require quite different arguments, so that we have decided to give them separately instead of trying to provide a simultaneous treatment that would have made the discussion more obscure. The case of groups acting on trees is slightly simpler, so that we do it first (Section \ref{sec-trees}). The case of groups acting on the circle is carried out in Section \ref{sec-circle}, and finally Section \ref{sec-line} deals with groups acting on the real line. %We also set some preliminaries and terminology in Section \ref{sec-terminology}.

\subsection*{Acknowledgments} The authors are very grateful to Yair Glasner, whose talk in Bernoulli Center on related problems was a source of inspiration for this work.

\section{Preliminaries} \label{sec-terminology}

If $\Omega$ is a set, we will denote by $\Omega^{\{ 2\}}$ the set of unordered pairs in $\Omega$. For a permutation group $G \leq \Sym(\Omega)$ and $ \omega \in \Omega$, we will denote by $G_\omega$ the stabilizer of $\omega$ in $G$. For $\Delta \in \Omega^{\{ 2\}}$, $G_\Delta$ will be the pointwise stabilizer of $\Delta$, and $G_{(\Delta)}$ will be the setwise stabilizer of $\Delta$. Observe that when $G$ is transitive on $\Omega$, all the subgroups $G_\omega$ are conjugated in $G$. Similarly when $G$ is $2$-transitive, all the subgroups $G_\Delta$ are conjugated. In particular any property that is invariant by conjugation and that holds for one of these subgroups automatically holds for all of them. This observation will be used implicitly throughout Sections \ref{sec-trees} and \ref{sec-circle}. 

Recall that a permutation group $G \leq \Sym(\Omega)$ is \textbf{primitive} if there is no $G$-invariant partition of $\Omega$ apart from the trivial ones. A transitive permutation group is primitive if and only if point stabilizers are maximal subgroups of $G$. Any $2$-transitive permutation group is primitive. If $N$ is a non-trivial normal subgroup of $G$ and $G$ is primitive, then $N$ is transitive on $\Omega$. 

When $g,h$ are elements of a group $G$, we use the notation $h^g = ghg^{-1}$ for the conjugate of $h$ by $g$, and we use the convention $[g,h] = ghg^{-1}h^{-1}$ for the commutator of $g$ and $h$.

\section{Groups acting on trees} \label{sec-trees}

We recall some some background about group actions trees. Proofs can be found in \cite{Pa-Va,Tits}. An automorphism $g$ of a simplicial tree $\T$  is \textbf{elliptic} if $g$ stabilizes a vertex or an edge, and $g$ is \textbf{hyperbolic} if there exists a bi-infinite geodesic line, called the axis of $g$, along which $g$ acts as a non-trivial translation. In the latter situation we also say that $g$ is a translation. The two ends defined by the axis are called the endpoints of $g$. Any automorphism of $\T$ is either elliptic or hyperbolic. For a group $G$ acting on $\T$, the $G$-action on $\T$ is \textbf{minimal} if there is no proper $G$-invariant subtree. Any group $G$ containing a translation admits a unique minimal invariant subtree, which is the union of the axis of translations of $G$.

If $e$ is an edge and $v$ a vertex of $e$, the subtree of $\T$ spanned by vertices whose projections to $e$ is equal to $v$ is called a \textbf{half-tree}. Note that if $G$ acts minimally on $\T$ and if $\T$ is not a single vertex or a single edge, then every half-tree of $\T$ is infinite.

Group actions on trees enjoy the following classification:

\begin{prop}
Every group $G \leq \autT$ satisfies exactly one the following:
\begin{enumerate}
	\item $G$ stabilizes a vertex or an edge.
	\item $G$ is \textbf{horocyclic}: $G$ contains no translation and $G$ fixes a unique point in $\partial \T$.
	\item $G$ is \textbf{lineal}: $G$ contains translations, and all translations in $G$ share the same axis.
	\item $G$ is \textbf{focal}: $G$ contain translations and $G$ fixes a unique point in $\partial \T$.
	\item $G$ is of \textbf{general type}: there exist two translations of $G$ not sharing any endpoint. 
\end{enumerate}
\end{prop}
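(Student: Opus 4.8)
The plan is to split the analysis according to the two standard facts recalled in \cite{Pa-Va,Tits}: the fixed-point set $\fix(g)$ of an elliptic automorphism is a non-empty subtree, and if $a,b$ are elliptic with $\fix(a)\cap\fix(b)=\varnothing$ then the product $ab$ is hyperbolic; dually, the product of two hyperbolic elements is controlled by the relative position of their axes. The five types will then be separated by a single dichotomy (does $G$ contain a translation?) followed, in the translation case, by a discussion of how the axes overlap.

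First I would treat the case where \emph{every} element of $G$ is elliptic. Then the family of subtrees $\{\fix(g)\}_{g\in G}$ is pairwise intersecting, since two elements with disjoint fixed sets would have a hyperbolic product. Invoking the Helly property of subtrees of a tree, either this family has a common point, in which case $G$ fixes a vertex or stabilizes an edge (inverting it if necessary), giving case (1); or the subtrees have empty total intersection but form a nested family pointing to a common end $\xi\in\partial\T$, giving case (2). For uniqueness of $\xi$ in the horocyclic case, I would note that if $G$ fixed a second end $\eta$ it would preserve the line $(\xi,\eta)$, and as $G$ has no translation it would fix that line pointwise, contradicting the absence of a global fixed vertex.

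It remains to handle the case where $G$ contains a translation $g_0$, with axis $L_0$ and endpoints $\xi_0^{\pm}$. If some translation of $G$ has neither endpoint in $\{\xi_0^+,\xi_0^-\}$ then, after taking suitable powers, the product with $g_0$ is hyperbolic and exhibits two translations without a common endpoint, so $G$ is of general type (5). Assuming instead that every translation shares an endpoint with $g_0$, I would use the combination lemmas to show that two hyperbolic elements whose axes either are disjoint or cross transversally produce a new translation with a fresh endpoint, again forcing case (5). Ruling this out leaves exactly two configurations: either all translations have axis $L_0$, so that $G$ preserves the line $L_0$ and is lineal (3); or all translations have a single common endpoint $\xi$ without sharing a full axis, in which case $G$ fixes $\xi$. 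In the latter situation, since the second endpoints of translations range over infinitely many ends, no other end can be fixed, and the translation-length (Busemann) character toward $\xi$ confirms both that $\xi$ is the unique fixed end and that $G$ contains translations, which is the focal case (4).

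Finally I would record that the alternatives are mutually exclusive, most efficiently through the limit set $\Lambda(G)\subseteq\partial\T$, which is empty in case (1), a single point in case (2), a pair in case (3), and infinite in cases (4) and (5), the last two being distinguished by whether $G$ fixes an end. I expect the genuine work to be concentrated in the translation case: the bookkeeping of the combination lemmas describing the product of two hyperbolic isometries as a function of the overlap and coherence of their axes, the handling of elements inverting a shared axis, and the clean separation of the lineal configuration (a preserved line, hence a fixed pair of ends) from the focal one (a unique fixed end with non-trivial translation character), are the points requiring the most care.
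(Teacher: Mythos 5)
A preliminary remark: the paper does not prove this proposition at all — it is recalled as background, with proofs attributed to \cite{Pa-Va,Tits} — so your sketch can only be compared with the standard argument, whose overall architecture (all-elliptic versus contains-a-translation; Helly-type arguments in the first case; axis configurations in the second) you do reproduce. The elliptic half of your sketch is essentially correct modulo one imprecision: the Helly property yields a common point only for \emph{finite} families of subtrees, and the passage to the whole group requires the directed family of finite intersections $\bigcap_{g\in S}\fix(g)$, $S\subset G$ finite, whose projections from a fixed base vertex march off to a single end when the total intersection is empty; this family is directed, not \enquote{nested}, but the repair is routine, and your uniqueness argument for the fixed end (an elliptic element fixing both ends of a line fixes the line pointwise) is correct.

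The genuine gap is in the translation case. From your two standing assumptions — (a) every translation shares an endpoint with $g_0$, and (b) no two axes are disjoint or cross transversally (note that such a pair of translations already shares no endpoint, so it witnesses case (5) directly; no combination lemma is needed there) — you conclude that \enquote{exactly two configurations} remain: all axes equal to $L_0$, or all translations sharing a single common endpoint. This inference is false: the tripod configuration survives both assumptions. Namely, translations $h_1,h_2$ with axes $(\xi_0^+,\eta)$ and $(\xi_0^-,\eta)$, where $\eta\notin\{\xi_0^{\pm}\}$, together with $g_0$ with axis $(\xi_0^+,\xi_0^-)$: the three axes pairwise share endpoints, are not all equal, and have no common endpoint. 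This configuration must be pushed into case (5), and doing so requires a conjugation argument your sketch omits: $g_0h_1g_0^{-1}$ is a translation with endpoints $\{\xi_0^+,\,g_0(\eta)\}$, and since $g_0$ fixes no end other than $\xi_0^{\pm}$, this translation shares no endpoint with $h_2$. That conjugation trick is the real content of the hyperbolic half of the classification, and without it the case analysis does not close. A smaller omission of the same kind occurs in your focal case: you assert that $G$ fixes $\xi$, but your argument only concerns the \emph{translations} of $G$; to see that an arbitrary (possibly elliptic) $g\in G$ fixes $\xi$, one again conjugates — if $g(\xi)\neq\xi$, then applying $g$ to two translations with distinct second endpoints $\eta_1\neq\eta_2$ forces $\xi=g(\eta_1)=g(\eta_2)$, a contradiction. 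Finally, your limit-set criterion for mutual exclusivity is a good instinct; it implicitly (and correctly) reads \enquote{horocyclic} as requiring unbounded orbits, which is needed for cases (1) and (2) to be disjoint as literally stated (the stabilizer of a vertex and of an end in $\aut(\T_3)$ satisfies both descriptions otherwise).
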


In the sequel we will repeatedly use the following easy fact:

\begin{lem}
Let $G \leq \autT$ be a subgroup such that the action of $G$ on $\T$ is minimal and of general type. Then given two half-trees $A, B \subset \T$, there exists $g \in G$ such that $g(A) \subseteq B$.
\end{lem}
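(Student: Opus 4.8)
The plan is to exploit the defining feature of general type actions, namely the existence of two translations with no common endpoint, to produce translations whose attracting/repelling dynamics can push any half-tree into any other. Recall that a hyperbolic element $g$ with axis having endpoints $\xi_-$ (repelling) and $\xi_+$ (attracting) has \emph{north-south dynamics} on the boundary: for any half-tree not containing $\xi_-$, high powers $g^n$ contract it into arbitrarily small neighborhoods of $\xi_+$. So the first step is to record this contraction property: if $A$ is a half-tree with $\xi_- \notin \overline{A}$, then $g^n(A)$ is eventually contained in any fixed half-tree whose corresponding end is a small neighborhood of $\xi_+$.

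First I would reduce to finding, for the target half-tree $B$, a translation $g \in G$ whose attracting endpoint $\xi_+$ lies in the open end defined by $B$ (so that some neighborhood of $\xi_+$, hence $g^n(A)$ for large $n$, sits inside $B$) and whose repelling endpoint $\xi_-$ avoids the end defined by $A$. Using minimality and general type, the set of endpoints of translations of $G$ is dense in $\partial \T$ (it is a $G$-invariant closed set, and minimality forces it to be all of $\partial \T$ when the action is of general type). Hence I can choose a translation $h$ whose attracting endpoint lies arbitrarily close to the end of $B$, in particular inside the end defined by $B$; after possibly replacing $h$ by a conjugate or by another translation with disjoint axis (here the general type hypothesis is essential, since it guarantees translations with independent axes), I can also arrange that its repelling endpoint lies outside $\overline{A}$.

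With such a $g$ in hand, the second step is the routine estimate: since $\xi_-(g) \notin \overline{A}$, the north-south dynamics gives $g^n(A) \subseteq B$ for all sufficiently large $n$, and I take this $g^n$ as the desired element. The only subtlety is matching up half-trees with the open ends they determine and checking the containment of a shrinking neighborhood of $\xi_+$ inside $B$; this is where one must be slightly careful, because a half-tree is determined by an edge, and I must verify that the attracting end being inside the end of $B$ forces, for large $n$, the full image half-tree $g^n(A)$ (not just its boundary) into $B$.

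The main obstacle I expect is precisely arranging both endpoint conditions simultaneously: that $\xi_+(g)$ falls inside the end of $B$ while $\xi_-(g)$ stays out of the end of $A$. This is exactly the point where the \emph{general type} assumption does the work, since it provides a reservoir of translations with pairwise-independent axes, allowing me to decouple the two endpoints. Concretely, I would fix one translation realizing the attracting condition and, if needed, conjugate it by an element of $G$ moving its repelling endpoint off $\overline{A}$ (minimality ensures the $G$-orbit of any end is dense, so such a conjugating element exists), while keeping the attracting endpoint inside the end of $B$. Once both conditions hold, the contraction estimate finishes the proof.
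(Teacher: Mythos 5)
Your global strategy---push $A$ into $B$ using the dynamics of a translation---is the same idea as the paper's, but your execution has a genuine gap at exactly the step you yourself flag as the main obstacle: producing a \emph{single} translation whose attracting end lies in the end-set $\partial B$ of $B$ while its repelling end avoids $\overline{A}$. Your proposed mechanism is to start from $h$ with $\xi_+(h)\in\partial B$ and conjugate by some $k\in G$ so as to move the repelling end off $\overline{A}$ ``while keeping the attracting endpoint inside the end of $B$,'' citing density of the $G$-orbit of an end. But conjugation moves \emph{both} fixed points: $khk^{-1}$ has ends $k(\xi_-(h))$ and $k(\xi_+(h))$, and the density statement you invoke produces a $k$ placing $k(\xi_-(h))$ where you want while giving no control whatsoever over where $k(\xi_+(h))$ lands; nothing you say excludes that every such $k$ throws the attracting end out of $\partial B$. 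Since coordinating the two endpoints is precisely the difficulty, the argument is incomplete at its key point. (A secondary inaccuracy: your justification of density of hyperbolic endpoints---``it is a $G$-invariant closed set, and minimality forces it to be all of $\partial \T$''---is wrong as stated, because the set of endpoints is not closed and minimality is a hypothesis on the action on $\T$, not on $\partial\T$; the fact is true, but it needs the standard argument that minimality makes $\T$ the union of the axes of translations in $G$.)

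The gap is fixable, but the natural fix is essentially the paper's proof, which is much shorter and needs no limits at all: one first reduces to the case where $A$ and $B$ are disjoint, and then takes a translation $g$ whose \emph{entire axis} is contained in $B$ (for a minimal action of general type, every half-tree contains the axis of some element of $G$---the same standard fact that underlies your density claim). Then both ends of $g$ lie in $\partial B$, which is disjoint from $\partial A$, so your two endpoint conditions hold automatically; better still, no power of $g$ is needed: $A$ is disjoint from the axis, hence projects to a single point $p$ of it, and already $g(A)$ hangs off the point $g(p)\neq p$ inside $B$, which forces $g(A)\subseteq B$. If you insist on staying within your boundary-dynamics framework, the endpoint coordination can be repaired by a ping-pong trick rather than a bare density appeal: pick any translation $u$ whose ends avoid $\xi_-(h)$ (general type provides one), and note that $h^nuh^{-n}$ has both ends $h^n(\xi_\pm(u))\to\xi_+(h)$, hence both inside $\partial B$ for $n$ large---but this, too, requires first arranging $\partial A\cap\partial B=\varnothing$, i.e.\ the same disjointness reduction.
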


\begin{proof}
Without loss of generality we may assume that $A,B$ are disjoint. In this case, any translation $g$ whose axis is contained in $B$ satisfies $g(A) \subseteq B$. 
\end{proof}

\subsection{Preliminaries}

The goal of this section is to establish preliminary results towards the proof of Theorem \ref{thm-intro-tree}.

\begin{lem} \label{lem-class-ntrans-type}
Let $G \leq \autT$ whose action on $\T$ is minimal and of general type. Suppose that $\Omega$ is a set on which $G$ acts faithfully, transitively, and with finitely many orbits on $\Omega \times \Omega$. Then for $\omega \in \Omega$, the subgroup $G_\omega$ must act minimally on $\T$, and $G_\omega$ is either focal or of general type.
\end{lem}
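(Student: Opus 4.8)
The plan is to reduce the whole statement to a single assertion about the limit set of $H:=G_\omega$ in $\partial\T$, namely that $\Lambda(H)=\partial\T$, where $\Lambda(H)$ denotes the set of accumulation points of an $H$-orbit in $\T\cup\partial\T$. This set is canonically attached to $H$, so it transforms equivariantly: $\Lambda(gHg^{-1})=g\Lambda(H)$. I will use the standard dictionary between the five types and the limit set: $H$ is elliptic, horocyclic or lineal exactly when $\Lambda(H)$ has $0$, $1$ or $2$ points, and $H$ is focal or of general type exactly when $\Lambda(H)$ is infinite; in the latter two cases $\Lambda(H)=\partial\T_H$, where $\T_H$ is the unique minimal $H$-invariant subtree, equal to the convex hull of $\Lambda(H)$. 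Since $G$ is minimal, $\T$ is the convex hull of $\Lambda(G)=\partial\T$. Hence once $\Lambda(H)=\partial\T$ is established we get at once that $\Lambda(H)$ is infinite, so $H$ is focal or of general type, and that $\T_H=\T$, so $H$ acts minimally — which is precisely the conclusion. So the entire proof amounts to proving $\Lambda(H)=\partial\T$.

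First I record the combinatorial input. As $G$ is transitive on $\Omega$, the $G$-orbits on $\Omega\times\Omega$ correspond to the $H$-orbits on $\Omega$, so the hypothesis says $H$ has finitely many orbits on $\Omega$, i.e.\ there are $g_0=1,g_1,\dots,g_n\in G$ with $G=\bigcup_{i=0}^n Hg_iH$. Being of general type, $G$ contains a non-abelian free subgroup, so it cannot act faithfully on a finite set and $\Omega$ is infinite; recall moreover that $\partial\T=\Lambda(G)$ is a perfect compact set on which $G$ acts minimally. Next I dispose of the elliptic case $\Lambda(H)=\emptyset$: if $H$ fixes a point $x\in\T$, then $Hx=\{x\}$ gives $Gx=\bigcup_{i=0}^n H(g_ix)$, a finite union of bounded sets, hence a bounded $G$-orbit. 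A group with a bounded orbit fixes the centre of the convex hull of that orbit, so $G$ would be elliptic, contradicting general type. Thus $\Lambda(H)\neq\emptyset$.

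The heart of the matter is to upgrade $\Lambda(H)\neq\emptyset$ to $\Lambda(H)=\partial\T$. Using that $\Lambda(H)$ is $H$-invariant, the decomposition $G=\bigcup_iHg_iH$ gives $\bigcup_{g\in G}g\Lambda(H)=\bigcup_{i=0}^n H\,(g_i\Lambda(H))$, a non-empty $G$-invariant subset of $\partial\T$; by minimality of the $G$-action on $\partial\T$ its closure is all of $\partial\T$, and since the union is finite,
\[
\partial\T=\bigcup_{i=0}^n\overline{H\,(g_i\Lambda(H))}.
\]
Now comes the key dynamical input: the set of accumulation points (the derived set) of each $\overline{H\,(g_i\Lambda(H))}$ is contained in $\Lambda(H)$. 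As the derived set of a finite union is the union of the derived sets, the derived set of $\partial\T$ is contained in $\Lambda(H)$; but $\partial\T$ is perfect, so it equals its own derived set, whence $\partial\T\subseteq\Lambda(H)$, i.e.\ $\Lambda(H)=\partial\T$.

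The step I expect to be the main obstacle is exactly this last dynamical input, controlling the accumulation points of the orbits $H\,(g_i\Lambda(H))$. When $H$ is horocyclic or lineal, $\Lambda(H)$ is a set of one or two points, and the inclusion is an immediate consequence of the convergence dynamics of a single $H$-orbit of a boundary point; together with the perfectness of $\partial\T$ this already excludes the horocyclic and lineal cases. The delicate situation is when $\Lambda(H)$ is already infinite (the focal and general type cases), where one must show that $H$ still accumulates only on $\Lambda(H)$ when applied to the whole compact set $g_i\Lambda(H)$; this relies on the convergence-group behaviour of $H$ on $\partial\T$ and on the $H$-equivariant nearest-point projection onto $\T_H$ having the right properness off the minimal subtree. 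This is the only point where the fine boundary dynamics, rather than the soft double-coset bookkeeping, is genuinely required.
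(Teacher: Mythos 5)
Your reduction of the lemma to the single assertion $\Lambda(G_\omega)=\partial\T$ is sound, and the soft steps are correct: the double-coset bookkeeping, the exclusion of the elliptic case via bounded orbits, and the identity $\partial\T=\bigcup_i\overline{H\,(g_i\Lambda(H))}$ coming from minimality of $G$ on $\partial\T$ (though note $\partial\T$ need not be compact, since $\T$ is not assumed locally finite; only perfectness and minimality are used, so this is harmless). The genuine gap is exactly the step you flag as the ``key dynamical input'': the claim that the accumulation points of $\overline{H\,(g_i\Lambda(H))}$ lie in $\Lambda(H)$, which you assert is ``immediate from convergence dynamics'' when $H$ is horocyclic or lineal. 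This is false, not merely unproven. The limit set $\Lambda(H)$ controls accumulation of $H$-orbits of \emph{vertices}, not of boundary points, and a subgroup $H\le\aut(\T)$ does not act on $\partial\T$ with convergence-group dynamics unless its action on $\T$ is metrically proper --- an assumption nowhere available here, and one that is antithetical to the paper's setting of non-topologically free boundary actions. Concretely, take $\T=\T_d$ and let $H$ be the group of \emph{all} elliptic automorphisms fixing an end $\xi_0$. This $H$ is horocyclic, and $\Lambda(H)=\{\xi_0\}$, because every $h\in H$ preserves the Busemann function of $\xi_0$, so vertex orbits lie on horospheres, whose closures meet $\partial\T_d$ only at $\xi_0$. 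Yet for any $\eta\neq\xi_0$ one has $H\eta=\partial\T_d\setminus\{\xi_0\}$ (given $\eta,\eta'$, fix pointwise the common ray $[p,\xi_0)$ of the geodesics $(\xi_0,\eta)$, $(\xi_0,\eta')$ and swap the branches at $p$), so a single boundary orbit is dense and its accumulation set is all of $\partial\T_d$, not $\Lambda(H)$. Thus your inclusion fails precisely in the horocyclic case --- the very case the lemma must rule out --- and a similar failure occurs for lineal $H$ whose pointwise stabilizer of the axis is large, and in the focal/general-type cases via large pointwise stabilizers of half-trees.

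Any correct completion would have to use the finiteness of $H\backslash G/H$ \emph{inside} the dynamical step itself, and that is where the entire content of the lemma lies; as written, your proposal defers all of it to a false statement. For comparison, the paper never touches boundary dynamics: it proves minimality by noting that if $H$ preserved a proper subtree $A$, then writing $g=\gamma g_i\gamma'$ with $\gamma,\gamma'\in H$ gives $\sup_{g\in G} d(gA,A)=\max_i d(g_iA,A)<\infty$, contradicting that a minimal general-type $G$ has hyperbolic elements with axes in half-trees arbitrarily far from $A$; and it excludes the horocyclic case by extracting $g^m=hg^nh'$ with $m>n$ and $h,h'\in H$ fixing a common vertex $x$ (possible since finitely many elements of a horocyclic group fix vertices near the fixed end), whence $d(g^mx,x)=d(g^nx,x)$ and $mL(g)=nL(g)$, absurd. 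You should either prove your dynamical claim using the double cosets (which essentially amounts to redoing these arguments) or replace it by such direct metric arguments.
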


\begin{proof}
We first show that $G_\omega$ acts minimally on $\T$. We argue by contradiction and assume that $G_\omega$ preserves a proper invariant subtree $A \subseteq T$. Recall that for a transitive permutation group, the number of $G$-orbits in the product $\Omega \times \Omega$ is equal to the number of double cosets $G_\omega \backslash G / G_\omega$. By assumption this quantity is finite, and hence there exist $g_1,\ldots,g_r \in G$ such that $G = \cup G_\omega g_i G_\omega$. Given an element $g \in G$, we consider $d(gA,A) = \inf_{v,w \in A} d(gv,w)$. By assumption there are $\gamma, \gamma' \in G_\omega$ and some $g_i$ such that $g = \gamma g_i \gamma'$, so that $d(gA,A) = d(g_iA,A)$ because $\gamma$ and $\gamma'$ preserve $A$ by assumption. In particular we have $\sup_g d(gA,A) < \infty$. Now for an arbitrary $R > 0$, since $G$ acts minimally on $\T$ we may always find a half-tree $B$ in $\T$ such that $d(A,B) > R$, and some hyperbolic element $g$ of $G$ whose axis is contained inside $B$. For such an element $g$ we have that $g(A)$ is included inside $B$, and in particular $d(gA,A) \geq d(B,A) > R$. Since $R$ is arbitrary, we have obtained our contradiction.

We now shall prove that $G_\omega$ is either focal or of general type. By the previous paragraph it suffices to argue that $G_\omega$ cannot be horocyclic. Again we assume for a contradiction that $G_\omega$ is horocyclic. Let us fix some hyperbolic element $g$ in $G$. Since the double coset space $G_\omega \backslash G / G_\omega$ is finite, we may find $m > n \geq 1$ such that $g^m$ and $g^n$ fall into the same double coset. Let $h, h' \in G_\omega$ such that $g^m = h g^n h'$, and choose a vertex $x$ in $\T$ sufficiently close to the fixed point of $G_\omega$ so that $h,h'$ both fix $x$. Then we have $d(g^mx,x) = d(h g^n h'x,x) = d(g^n x,x)$ because $hx=h'x=x$. Now recall that for a hyperbolic isometry $\gamma$ of a tree and for a vertex $v$, we have the formula $d(\gamma v, v) = 2 d(v,L_\gamma) + L(\gamma)$, where $L_\gamma$ is the axis of $\gamma$ and $L(\gamma)$ is the translation length of $\gamma$ (see \cite[Prop. 3.2]{Tits}). Here since the axis of $g^m$ and $g^n$ is the axis of $g$, it follows that we have $2 d(x,L_g) + L(g^m) = 2 d(x,L_g) + L(g^n)$, i.e.\ $m L(g) = L(g^m)  = L(g^n) = nL(g)$. This is absurd because $L(g) > 0$ and $m > n$ by assumption.
\end{proof}

Given a half-tree $A \subseteq T$ and $G \leq \autT$, we denote by $G_A$ the subgroup of $G$ consisting of elements that fix pointwise the complement of $A$. In other words, $G_A$ consists of the elements of $G$ that are supported inside $A$.

\begin{lem} \label{lem-stab-orbits-size3}
Let $G \leq \autT$ be minimal and of general type, and such that the action of $G$ on $\partial \T$ is not topologically free. Suppose that $G$ acts $2$-transitively on a set $\Omega$ in such a way that  for every $\omega \in \Omega$, the subgroup $G_\omega$ is of general type. Then for every half-tree $A \subseteq T$, all orbits of $G_A$ in $\Omega$ have cardinality at least $3$.
\end{lem}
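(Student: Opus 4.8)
The plan is to show that $G_A$ can have no orbit of cardinality $1$ or $2$. Two geometric facts drive everything. First, since the action on $\partial \T$ is not topologically free there are $g_0\neq 1$ and a half-tree $A_0$ with $g_0$ supported in $A_0$; given any half-tree $B$, the preliminary lemma on moving half-trees provides $g\in G$ with $g(A_0)\subseteq B$, and then $gg_0g^{-1}\in G_B\setminus\{1\}$. Hence $G_B\neq 1$ for \emph{every} half-tree $B$. Second, if $B,B'$ are disjoint half-trees then $G_B$ and $G_{B'}$ have disjoint supports, hence commute; applied to the complementary pair this gives that $G_A$ and $G_{A^c}$ commute, with trivial intersection. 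Moreover, as the action is of general type the minimal subtree is not a line, so branch vertices occur arbitrarily far out; consequently every half-tree contains infinitely many pairwise disjoint sub-half-trees, which in particular exhibits infinitely many commuting nontrivial subgroups $G_{C_i}\le G_A$ and shows that $G_A$ is infinite.

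Next I would record two consequences of the multiple transitivity. Because $G$ permutes half-trees by conjugation, $gG_Bg^{-1}=G_{gB}$, the subgroup $P:=\langle G_B : B \text{ a half-tree}\rangle$ is normal in $G$ and nontrivial; since the $2$-transitive action is primitive, $P$ is transitive on $\Omega$. On the other hand, since $G_{A^c}$ centralises $G_A$ it permutes the $G_A$-orbits and preserves their cardinalities, so the set of points whose $G_A$-orbit has at most two elements is invariant under $\langle G_A,G_{A^c}\rangle$. I will also use the semiregularity principle: if $T\le\Sym(\Omega)$ is transitive, then any permutation commuting with $T$ and fixing a point is trivial.

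Now suppose, for contradiction, that some $G_A$-orbit $\mathcal{O}$ has $|\mathcal{O}|\le 2$. Let $H$ be the kernel of the action of $G_A$ on $\mathcal{O}$; it has index at most $2$ in $G_A$, and since $G_A$ is infinite, $H\neq 1$. By construction $H$ fixes a point $\omega\in\mathcal{O}$, is supported in $A$, and still commutes with $G_{A^c}$ and with every $G_B$ for $B\subseteq A^c$. The set $\fix_\Omega(H)$ is then nonempty and invariant under $C_G(H)\supseteq G_{A^c}$. The strategy is to contradict faithfulness: it suffices to produce a \emph{transitive} subgroup of $G$ centralised by $H$, for then the nontrivial element of $H$ fixing $\omega$ is forced by semiregularity to be the identity; equivalently, one wants $C_G(H)$ to be transitive, whence $\fix_\Omega(H)=\Omega$ and $H=1$. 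The size-$1$ case is this same argument with $H=G_A$, and the size-$2$ case requires first descending to the index-two subgroup $H$ and tracking the pair $\mathcal{O}$.

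The decisive difficulty, which I expect to be the main obstacle, is exactly this transitivity: a single half-tree stabiliser such as $G_{A^c}$ (and a priori $C_G(H)$) need not be transitive, only the global group $P$ generated by \emph{all} half-tree stabilisers is known to be. To close the gap I would bring in the remaining hypothesis, that every point stabiliser $G_\omega$ is of general type and hence, by Lemma~\ref{lem-class-ntrans-type}, minimal on $\T$: working inside $G_\omega$ one can move half-trees freely and transport the relation $H\le G_\omega$ across the tree, so as to enlarge the subgroup simultaneously centralising a conjugate of $H$ and fixing $\omega$ until transitivity on $\Omega$ is reached. Conceptually the whole step is a local incarnation of the classical fact that two nontrivial commuting normal subgroups of a primitive permutation group act as the two regular representations of a common group, and in particular freely, which is incompatible with an orbit of size at most two once the acting group is known to be infinite.
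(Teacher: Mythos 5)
Your preliminary observations are sound: $G_B\neq 1$ for every half-tree $B$ (by non-topological freeness plus the moving lemma), $G_A$ is infinite, the group $P$ generated by all half-tree fixators is normal and hence transitive by primitivity, and the reduction to a subgroup $H\leq G_A$ of index at most $2$ which is non-trivial and fixes a point $\omega$ is the right first step. But the crux of your argument is never carried out, as you yourself acknowledge: you need a transitive subgroup of $\Sym(\Omega)$ centralizing $H$ (or a conjugate of $H$), and nothing in the proposal produces one. The only subgroups of $C_G(H)$ you can exhibit are the fixators $G_B$ with $B$ disjoint from $A$; all of these fix $A$ pointwise, so they generate a subgroup of $G_{A^c}$, which is itself a half-tree fixator --- exactly the kind of group whose orbit structure the lemma is trying to control, so assuming its transitivity is circular. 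The proposed mechanism for closing the gap (``enlarge the subgroup simultaneously centralising a conjugate of $H$ and fixing $\omega$ until transitivity on $\Omega$ is reached'') is not an argument: centralizers of \emph{different} conjugates $gHg^{-1}$, $g\in G_\omega$, cannot be combined into a single group centralizing a single non-trivial group with a fixed point, which is what semiregularity requires. Likewise, the classical fact you invoke about commuting normal subgroups of a primitive group does not apply here, since neither $H$ nor $G_{A^c}$ is normal in $G$, and normality is precisely what that fact needs.

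The gap can be closed, but by a different assembly of your own ingredients, which is what the paper does. Since the $G_A$-orbit $\mathcal{O}$ has size at most $2$, the image of $G_A$ in $\Sym(\mathcal{O})$ is abelian, so $[G_A,G_A]\leq H\leq G_\omega$; moreover $[G_A,G_A]\neq 1$ because $G_A$ is non-abelian (for a non-trivial $f\in G_A$ and a half-tree $B\subset A$ with $f(B)\cap B=\varnothing$, $f$ conjugates $G_B$ onto $G_{f(B)}$ and hence commutes with no non-trivial element of $G_B$; note that your disjoint-supports argument only shows $G_A$ contains many commuting elements, not that it is non-abelian). Now by hypothesis and Lemma~\ref{lem-class-ntrans-type}, $G_\omega$ acts minimally on $\T$ and is of general type, so for every half-tree $B$ there exists $g\in G_\omega$ with $g(B)\subseteq A$; then $[G_B,G_B]=g^{-1}[G_{g(B)},G_{g(B)}]g\leq g^{-1}[G_A,G_A]g\leq g^{-1}G_\omega g=G_\omega$. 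Hence the subgroup $N$ generated by all the $[G_B,G_B]$, which is normal in $G$ (conjugation permutes half-trees) and non-trivial, is contained in $G_\omega$. By primitivity of the $2$-transitive action, $N$ must be transitive on $\Omega$, contradicting $N\leq G_\omega$. This replaces your missing step --- a transitive group centralized by $H$ --- by a non-trivial \emph{normal} subgroup of $G$ contained in a point stabilizer, and requires neither semiregularity nor any transitivity statement for centralizers.
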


\begin{proof}
Assume by contradiction that $G_A$ has an orbit of cardinality at most $2$. Then its commutator subgroup $[G_A,G_A]$ must fix a point $\omega \in \Omega$. (Note that the group $G_A$ is always non-abelian: in fact given any non-trivial  $f\in G_A$ , choose a a half-tree $B\subset A$ such that $f(B) \cap B=\varnothing$, then $fG_Bf^{-1}=G_{f(B)}$ and thus $f$ cannot commute with any element of $G_B$).  Since $G_\omega$ is minimal by Lemma \ref{lem-class-ntrans-type} and $G_\omega$ is of general type by assumption, it follows that if $B$ is an arbitrary half-tree in $\T$, then there is $g \in G_\omega$ such that $g(B) \subseteq A$. Therefore $gG_Bg^{-1} \leq G_A$, and it follows that $G_\omega$ contains $[G_B,G_B]$. All these subgroups $[G_B,G_B]$ generate a non-trivial normal subgroup of $N$ of $G$ when $B$ varies. Therefore $G_\omega$ contains a non-trivial normal subgroup of $G$, and hence is transitive on $\Omega$: this is absurd.
\end{proof}

\begin{lem} \label{lem-tree-moves}
Suppose we are given three disjoint half-trees $A_1,A_2,A_3 \subset \T$, and let $g \in \autT$. Then either there exists $i$ such that $A_i$ and $g(A_i)$ are disjoint, or $g(A_i) = A_i$ for every $i$.
\end{lem}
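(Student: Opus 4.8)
The plan is to prove the contrapositive form of the dichotomy: assuming that $A_i \cap g(A_i) \neq \varnothing$ for every $i$, I will show that $g(A_i) = A_i$ for every $i$. The whole argument is organized around the median of the three half-trees. For each $i$ let $v_i$ denote the vertex of the boundary edge of $A_i$ that lies inside $A_i$ (the ``root'' of $A_i$), and let $m$ be the median of $v_1, v_2, v_3$, i.e.\ the unique vertex lying on each of the three geodesics $[v_i, v_j]$. Since the $A_i$ are pairwise disjoint and half-trees are convex, the geodesic $[v_i,v_j]$ is contained in $A_k^c$ whenever $k \notin \{i,j\}$; hence $m \notin A_1 \cup A_2 \cup A_3$. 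Let $D_i$ be the connected component of $\T \setminus \{m\}$ containing $v_i$; these are pairwise disjoint half-trees (the three geodesics $[m,v_i]$ leave $m$ in distinct directions, by the defining property of the median), and since $m \notin A_i$ one checks that $A_i$ is exactly the set of vertices $x$ with $v_i \in [m,x]$, so that $A_i \subseteq D_i$.

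The crux is the trivial inclusion $A_i \cap g(A_i) \subseteq D_i \cap g(D_i)$, coming from $A_i \subseteq D_i$ and $g(A_i) \subseteq g(D_i)$; it is what lets all three half-trees interact simultaneously. In particular our standing assumption gives $D_i \cap g(D_i) \neq \varnothing$ for every $i$, where the $g(D_i)$ are the pairwise disjoint components of $\T \setminus \{g(m)\}$. I then claim $g(m) = m$. If not, set $m' = g(m) \neq m$. At most one $D_i$ contains $m'$ and at most one $g(D_i)$ contains $m$ (by disjointness), so by pigeonhole over the three indices there is an $i$ with $m' \notin D_i$ and $m \notin g(D_i)$. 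Picking $z \in D_i \cap g(D_i)$: since $m$ separates $z$ from $m'$ we get $m \in [z,m']$, and since $m'$ separates $z$ from $m$ we get $m' \in [z,m]$; adding the two resulting distance identities forces $d(m,m') = 0$, a contradiction. Hence $g(m) = m$.

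With $g(m) = m$ each $g(D_i)$ is again a component of $\T \setminus \{m\}$, and $D_i \cap g(D_i) \neq \varnothing$ forces $g(D_i) = D_i$. As $g$ fixes $m$ it is an isometry with $d(m, g(v_i)) = d(m, v_i)$, so the roots $v_i$ and $g(v_i)$ lie at the same distance from $m$ inside $D_i$. Now $A_i$ (resp.\ $g(A_i)$) is exactly the set of $x$ with $v_i \in [m,x]$ (resp.\ $g(v_i) \in [m,x]$), and two \emph{distinct} vertices equidistant from $m$ cannot both lie on a common geodesic issued from $m$, so the two corresponding sets are disjoint. The assumption $A_i \cap g(A_i) \neq \varnothing$ therefore yields $v_i = g(v_i)$ and hence $A_i = g(A_i)$, as desired.

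I expect the main obstacle to be the step $g(m) = m$. The temptation is to split into cases according to whether $g$ is elliptic or hyperbolic and to track how it moves each individual half-tree; this quickly becomes unwieldy. The point is to avoid that entirely and to exploit only the separation properties of the median together with the pigeonhole count over the three directions at $m$, the inclusion $A_i \cap g(A_i) \subseteq D_i \cap g(D_i)$ being precisely the device that converts the hypothesis on the $A_i$ into the needed hypothesis on the coarser objects $D_i$.
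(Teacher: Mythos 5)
Your proof is correct, and its core is genuinely different from the paper's. Both arguments start from the same geometric object: the paper forms the tripod with center $c$ and extremities $v_1,v_2,v_3$, which are exactly your median $m$ and the roots, together with the same characterization $A_i=\{x : v_i\in[m,x]\}$. But the paper then argues by cases on the type of $g$ (translation, inversion, elliptic with fixed vertices), checking in each case that some $A_i$ is moved off itself, whereas you prove the contrapositive uniformly: coarsening $A_i$ to the component $D_i$ of $\T\setminus\{m\}$, the hypothesis gives $D_i\cap g(D_i)\neq\varnothing$ for all $i$, your pigeonhole/separation argument forces $g(m)=m$, and then $g$ permutes the components of $\T\setminus\{m\}$ and preserves distances to $m$, which pins down $g(v_i)=v_i$ and hence $g(A_i)=A_i$. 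What your route buys: no classification of automorphisms is needed, and it is watertight precisely where the paper's sketch is terse --- in the elliptic case, the assertion that $g(v_i)\neq v_i$ forces $A_i\cap g(A_i)=\varnothing$ can fail when the fixed subtree of $g$ meets $A_i$ (it then lies inside that single $A_i$, and one must pass to a different index to get disjointness), a subtlety your median argument never encounters. What the paper's route buys is brevity and an explicit picture of how each isometry type interacts with half-trees, a picture that recurs elsewhere in that section (e.g.\ translations whose axis lies in a prescribed half-tree).
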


\begin{proof}
We denote by $Y$ the tripod defined by the disjoint half-trees $A_1,A_2,A_3$. It is the finite tripod having a center $c$ and extremities $v_1,v_2,v_3$ so that $A_i$ is exactly the set of points $x$ such that the geodesic $[c,x]$ contains $[c,v_i]$, $i=1,2,3$. Suppose that $g$ is a translation. Then at least one $A_i$ contains no endpoint of $g$, and it follows that $g(A_i)$ is disjoint from $A_i$. Suppose that $g$ is elliptic. If $g$ is an inversion around an edge $e$, then $A_i$ and $g(A_i)$ are disjoint whenever $A_i$ does not contain $e$, and this happens for at least two of the $A_i$. When the set of vertices fixed by $g$ is non-empty, then either there is $i$ such that $g$ does not fix $v_i$, and again $A_i$ and $g(A_i)$ are disjoint; or $g$ fixes pointwise $Y$, and it follows that $g$ stabilizes $A_i$ for all $i$.
\end{proof}
%there exists $i \in \left\{1,2,3\right\}$ such that

\begin{lem} \label{lem-mix-id}
Let $G \leq \autT$, and $g_1,g_2,g_3 \in G$ supported in disjoint half-trees. For $g \in G$, at least one of the following hold:
\begin{enumerate}
	\item \label{item-one-commut} $[g_1^g,g_1]=1 \vee [g_2^g,g_2]=1 \vee [g_3^g,g_3]=1$;
	\item \label{item-all-commut} $[g_1^g,g_2]= [g_1^g,g_3]=  [g_2^g,g_1]=  [g_2^g,g_3]=  [g_3^g,g_1]=  [g_3^g,g_2]=1$.
\end{enumerate}

In particular $G$ satisfies \begin{equation} \label{eq-identity} \forall g \, \, [g_1^g, g_1] = 1 \vee [g_1^g, g_2] = 1 \vee \ldots \vee [g_3^g, g_3] = 1. \tag{$\star$} \end{equation}
\end{lem}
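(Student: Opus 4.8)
The plan is to combine Lemma~\ref{lem-tree-moves} with the observation that disjoint supports force commutation. Fix $g \in G$ and apply Lemma~\ref{lem-tree-moves} to the three disjoint half-trees $A_1, A_2, A_3$ in which $g_1, g_2, g_3$ are respectively supported. The lemma gives a dichotomy: either some $A_i$ satisfies $g(A_i) \cap A_i = \varnothing$, or $g(A_i) = A_i$ for every $i$. I claim the first alternative yields conclusion~\eqref{item-one-commut} and the second yields conclusion~\eqref{item-all-commut}.

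For the first alternative, suppose $g(A_i) \cap A_i = \varnothing$ for some index $i$. Then $g_i^g = g g_i g^{-1}$ is supported in $g(A_i)$, since conjugation moves the support of $g_i$ by $g$; here I would use the elementary fact that if $h$ is supported in a set $S$ then $h^g$ is supported in $g(S)$. Because $g_i$ is supported in $A_i$ and $g_i^g$ is supported in $g(A_i)$, and these two half-trees are disjoint, the two elements act on disjoint regions of $\T$ and hence commute: $[g_i^g, g_i] = 1$. This gives the $i$-th disjunct of~\eqref{item-one-commut}.

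For the second alternative, suppose $g(A_i) = A_i$ for all $i$. Then for each $i$, the element $g_i^g$ is again supported in $g(A_i) = A_i$. Now take any two distinct indices $i \neq j$. The element $g_i^g$ is supported in $A_i$ and $g_j$ is supported in $A_j$, and since $A_i \cap A_j = \varnothing$ (the half-trees are disjoint), these two elements have disjoint supports and therefore commute: $[g_i^g, g_j] = 1$. Ranging over all six ordered pairs $(i,j)$ with $i \neq j$ produces exactly the six commutation relations listed in~\eqref{item-all-commut}. This establishes the main dichotomy, and the universal formula~\eqref{eq-identity} follows immediately: for every $g$, one of the two alternatives holds, so at least one of the nine commutators $[g_i^g, g_j]$ (over all $i,j \in \{1,2,3\}$, including $i=j$) vanishes, which is precisely the disjunction asserted in~\eqref{eq-identity}.

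The only point requiring care is the claim that disjoint supports imply commutation and that $\supp(h^g) = g(\supp(h))$; both are standard but should be stated explicitly, since the whole argument rests on them. I expect no serious obstacle: Lemma~\ref{lem-tree-moves} does the geometric work, and the rest is a bookkeeping translation of support-disjointness into commutator identities. One minor subtlety is that~\eqref{item-all-commut} lists only the off-diagonal commutators $[g_i^g, g_j]$ with $i \neq j$, so in the second alternative I must verify that the first alternative's diagonal relations are genuinely not needed; indeed~\eqref{eq-identity} is weaker than both~\eqref{item-one-commut} and~\eqref{item-all-commut} and is implied by either, so this causes no difficulty.
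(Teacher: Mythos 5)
Your proposal is correct and follows essentially the same route as the paper's own proof: both invoke Lemma~\ref{lem-tree-moves} to get the dichotomy (some $A_i$ disjoint from $g(A_i)$, versus $g(A_i)=A_i$ for all $i$), then translate each alternative into commutation via $\supp(g_i^g)=g(\supp(g_i))$ and the fact that elements with disjoint supports commute. No gap; the paper's argument is just a more compressed version of the same bookkeeping.
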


%In particular if $c_{i,j}(t) := [g_i ^t, g_j] \in G \ast < \! t \!>$, then $G$ satisfies \[ \forall g \, \, c_{1,1}(g) = 1 \vee c_{1,2}(g) = 1 \vee \ldots \vee c_{3,3}(g) = 1. \]

\begin{proof}
Let $A_1,A_2,A_3$ be disjoint half-trees such that $g_i$ is supported in $A_i$. Suppose that there is $i$ such that $A_i$ and $g(A_i)$ are disjoint. Then the elements $g_i^g$ and $g_i$ have disjoint support, and hence commute. So we are in situation (\ref{item-one-commut}). Otherwise then by Lemma \ref{lem-tree-moves} we have that $g$ preserves $A_i$ for all $i$. Therefore the conjugate $g_i^g$ has support inside $A_i$, and hence commutes with $g_j$ for $j \neq i$, and we are in situation (\ref{item-all-commut}). The formula (\ref{eq-identity}) immediately follows from the first statement.
\end{proof}

In the sequel we will use Lemma \ref{lem-mix-id} through (\ref{eq-identity}), but for completeness we note that it has the following reinterpretation. Recall that for $w \in G \ast \mathbb{Z}$, we say that $G$ satisfies the \textbf{mixed-identity} $w=1$ if every homomorphism from $G \ast \mathbb{Z}$ to $G$ that is the indentity on $G$ is trivial on $w$. We say that $G$ satisfies a non-trivial mixed-identity if $w$ is a non-trivial element of $G \ast \mathbb{Z}$. Note that taking $w$ in $G \ast \mathbb{F}_n$ for a free group $\mathbb{F}_n$ of rank $n \geq 2$ would yield an equivalent definition \cite[Remark 5.1]{Hull-Osin}.

\begin{prop} \label{prop--tree-mixed-id}
Suppose that the action of $G \leq \autT$ on $\T$ is minimal and of general type, and that the action of $G$ on $\partial \T$ is not topologically free. Then $G$ satisfies a non-trivial mixed-identity.
\end{prop}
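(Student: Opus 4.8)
The plan is to deduce Proposition \ref{prop--tree-mixed-id} directly from the universal formula (\ref{eq-identity}) established in Lemma \ref{lem-mix-id}, by exhibiting a single nontrivial word in $G \ast \mathbb{Z}$ that the formula forces to be trivial under every $G$-identical homomorphism. First I would use the hypothesis that the action on $\partial \T$ is not topologically free: this produces a nontrivial element $f \in G$ supported in some half-tree. Since the action is minimal and of general type, Lemma 3.3 lets me push half-trees into one another, so I can conjugate $f$ by suitable elements $t_1, t_2 \in G$ to obtain three nontrivial elements $g_1 = f$, $g_2 = t_1 f t_1^{-1}$, $g_3 = t_2 f t_2^{-1}$ that are supported in \emph{pairwise disjoint} half-trees $A_1, A_2, A_3$. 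These are exactly the elements to which Lemma \ref{lem-mix-id} applies.

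Next I would encode the conclusion of (\ref{eq-identity}) as a word. Let $x$ denote the free generator of $G \ast \mathbb{Z}$, and set, for each ordered pair $(i,j)$ with $i,j \in \{1,2,3\}$,
\begin{equation*}
w_{ij} = [x g_i x^{-1}, g_j] = x g_i x^{-1} g_j x g_i^{-1} x^{-1} g_j^{-1},
\end{equation*}
where the $g_i$ are treated as the (fixed) letters coming from $G$. Formula (\ref{eq-identity}) asserts that for every $g \in G$ at least one $w_{ij}$ evaluates to $1$ when $x \mapsto g$. To turn a disjunction of relations into a single mixed-identity, I would take a product that realizes the logical ``or'': a standard device is to form
\begin{equation*}
w = \prod_{(i,j)} c_{ij}\, w_{ij}\, c_{ij}^{-1}
\end{equation*}
interleaved with fresh conjugators, but more simply, since the $w_{ij}$ are commutators, their simultaneous non-vanishing is obstructed; the cleanest route is to build $w$ out of nested commutators of the $w_{ij}$ so that $w$ vanishes whenever any single $w_{ij}$ does, exploiting that $[a,1]=1$.

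The two things I must then check are: (a) the resulting word $w$ is a \emph{nontrivial} element of $G \ast \mathbb{Z}$, and (b) every $G$-identical homomorphism $\varphi : G \ast \mathbb{Z} \to G$ (i.e.\ $\varphi|_G = \mathrm{id}$) kills $w$. Point (b) is immediate from (\ref{eq-identity}): such a $\varphi$ is determined by $g = \varphi(x) \in G$, the formula guarantees some $w_{ij}$ dies, and by the nested-commutator construction $w$ dies too. Point (a) is the nontriviality of $w$ as an abstract element of the free product, which follows by writing $w$ in normal form: because $x$ genuinely appears (it is not cancellable against the fixed letters $g_i \neq 1$, which lie in the $G$-factor), $w$ reduces to a nonempty alternating word. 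I expect point (a) --- certifying that the concrete word we wrote down does not collapse to the identity in the free product --- to be the main obstacle, since it requires a careful normal-form bookkeeping; the correct safeguard is to choose the simplest possible witness, e.g.\ take $w = w_{12} = [x g_1 x^{-1}, g_2]$ after verifying, via disjointness of supports and Lemma \ref{lem-tree-moves}, that at least one fixed pair is never forced into commutation, so that a single $w_{ij}$ already gives a nontrivial mixed identity and no product is needed.
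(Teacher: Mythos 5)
Your core construction is exactly the paper's proof, and it is correct: produce one non-trivial element supported in a half-tree from non-topological freeness, use minimality and general type to conjugate it into three pairwise disjoint half-trees, form the nine words $w_{ij}=[x g_i x^{-1},g_j]\in G\ast\langle x\rangle$, and nest them into an iterated commutator so that the whole word evaluates to $1$ as soon as any single $w_{ij}$ does; then (\ref{eq-identity}) makes this word a mixed identity, and its non-triviality in the free product follows from the normal form theorem. (Your first idea, a product of conjugates of the $w_{ij}$, would not work --- a product does not vanish when one factor does --- but you discarded it yourself in favour of the nesting.)

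Your final ``safeguard,'' however, is a genuine error: no single word $w_{ij}$ is a mixed identity of $G$, so the simplification you recommend cannot be carried out, and the verification you propose (``at least one fixed pair is never forced into commutation'') would fail. The disjunction in Lemma \ref{lem-mix-id} is $g$-dependent --- \emph{which} pair commutes changes with $g$ --- and Lemma \ref{lem-tree-moves} gives nothing stronger. Concretely, fix any pair $(i,j)$. Since $g_j\neq 1$ is supported in $A_j$, one can choose a half-tree $B\subset A_j$ with $g_j(B)\cap B=\varnothing$; as observed in the proof of Lemma \ref{lem-stab-orbits-size3}, $g_j$ then commutes with no non-trivial element of $G_B$, because such an element would be supported in both $B$ and $g_j(B)$. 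Since the action of $G$ on $\T$ is minimal and of general type, there exists $g\in G$ with $g(A_i)\subseteq B$, and then $1\neq g_i^{g}\in G_B$, so $[g_i^{g},g_j]\neq 1$. This kills every pair, diagonal or not: the nesting is not a dispensable technicality but precisely the device that converts the $g$-dependent disjunction into a single identity. Relatedly, your expectation that non-triviality of the nested word is ``the main obstacle'' is misplaced: each $w_{ij}$ is already an alternating word in non-trivial letters (this is where $g_i,g_j\neq 1$ is used), and the iterated commutator is non-trivial by the normal form theorem, which is all the paper invokes.
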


\begin{proof}
The assumptions imply that we can choose non-trivial elements $g_1,g_2,g_3$ of $G$ that are supported in disjoint half-trees. We denote by \[ c_{i,j} = [g_i ^t, g_j] \in G \ast \langle  t \rangle, \] and we let $w$ be the iterated commutator of the nine elements $c_{1,1},\ldots,c_{3,3}$: \[ w = [c_{1,1},[c_{1,2},[ \ldots [c_{3,2}, c_{3,3}]] \ldots ] \] Since $g_1,g_2,g_3$ are non-trivial, the normal form theorem implies that $w$ is a non-trivial element of $G \ast \langle t  \rangle$. That $G$ satisfies the mixed-identity $w = 1$ is a direct consequence of the formula (\ref{eq-identity}) (Lemma  \ref{lem-mix-id}).
\end{proof}

%\begin{lem}
%Let $G \leq \autT$ whose action on $\T$ is minimal and of general type, and let $\rho: G \to \Sym(\Omega)$ be a $4$-transitive permutation representation. Then the action of the stabilizer of two points of $\Omega$ on $\T$ is minimal and of general type.
%\end{lem}

%\begin{lem} \label{lem-no-commu}
%Let $\Omega$ be a set, and $x,y,z \in \Omega$. Suppose we have elements $g_1, g_2,g \in \Sym(\Omega)$ such that $y = g_1(x) = g(x)$ and $z = g_1 (y) = g_2(y) = g(y)$ and $g(z) \neq g_2(z)$. Then $[g_1^g,g_2] \neq 1$.
%\end{lem}

%\begin{proof}
%Indeed we have $g_2 g_1^g (y) = g_2 g g_1 g^{-1}(y) = g_2 g g_1(x) = g_2 g (y) = g_2 (z)$, while $g_1^g g_2 (y) = g g_1 g^{-1} g_2 (y) = g g_1 g^{-1} (z) = g g_1 (y) = g(z)$. Since $g(z) \neq g_2(z)$ by assumption, $g_1^g$ and $g_2$ do not commute.
%\end{proof}

%\begin{thm}
%Let $G \leq \autT$ whose action on $\T$ is minimal and of general type. Suppose that the action of $G$ on $\partial \T$ is not topologically free. Then $G$ does not admit any faithful $3$-primitive permutation representation. In particular the transitivity degree of $G$ is $td(G) \leq 3$. 
%\end{thm}

\subsection{The proof of Theorem \ref{thm-intro-tree}}

In all this section we let $G \leq \autT$ be a subgroup that is minimal and of general type, and such that the action of $G$ on $\partial \T$ is not topologically free.

\begin{lem} \label{lem-trees-GDelta-gen}
There does not exist a set $\Omega$ on which $G$ acts faithfully and $3$-transitively and such that for $\Delta \in \Omega^{\{ 2\}}$, the action of the subgroup $G_\Delta$ on $\T$ is of general type.
\end{lem}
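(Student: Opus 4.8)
The plan is to argue by contradiction, assuming such a faithful $3$-transitive action on $\Omega$ exists, and to extract from it a configuration forbidden by the mixed identity (\ref{eq-identity}).

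First I would upgrade the hypothesis from pairs to points. Since $G$ is $3$-transitive it is in particular $2$-transitive, so all the pointwise stabilizers $G_\Delta$, $\Delta \in \Omega^{\{2\}}$, are conjugate in $G$; hence the assumption that one of them is of general type holds for every one of them. As $G_\Delta \leq G_\omega$ whenever $\omega \in \Delta$, the two translations with no common endpoint witnessing that $G_\Delta$ is of general type already lie in $G_\omega$, so each $G_\omega$ is itself of general type, and by Lemma \ref{lem-class-ntrans-type} it is moreover minimal. Thus the hypotheses of Lemma \ref{lem-stab-orbits-size3} are met, and I obtain the key structural input: for \emph{every} half-tree $A \subseteq \T$, all orbits of $G_A$ on $\Omega$ have cardinality at least $3$. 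Using non-topological freeness together with the general type assumption, I would then fix non-trivial elements $g_1, g_2, g_3 \in G$ supported in pairwise disjoint half-trees $A_1, A_2, A_3$; by Lemma \ref{lem-mix-id} the group $G$ satisfies formula (\ref{eq-identity}), i.e.\ for every $g \in G$ at least one of the nine commutators $[g_i^g, g_j]$ is trivial.

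The strategy is to contradict this always-true identity by producing, from the permutation action alone, a single $g \in G$ for which all nine commutators $[g_i^g, g_j]$, $1 \leq i,j \leq 3$, are non-trivial. To this end I would use the elementary criterion that $[a,b] \neq 1$ as soon as there is a point $\omega \in \Omega$ fixed by $a$ with $b\omega \neq \omega$ and $a(b\omega) \neq b\omega$; indeed then $ab\omega = a(b\omega) \neq b\omega = ba\omega$. Since $g_i^g$ acts on $\Omega$ with support $g \cdot \supp_\Omega(g_i)$, realizing this criterion for a pair $(i,j)$ means positioning, via $g$, the support of $g_i^g$ so that it misses a suitable $\omega$ fixed by $g_j$ yet meets $g_j\omega$. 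Here the conclusion of Lemma \ref{lem-stab-orbits-size3} furnishes exactly the room needed: because every $G_{A_i}$-orbit has at least three points, the generator $g_i$ can be arranged to fix a prescribed point while moving another prescribed point of the same orbit, and $3$-transitivity then lets $g$ reposition the relevant finite configuration of points so that the required fixing/moving relations hold.

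I expect the simultaneous realization of all nine non-commutations to be the main obstacle: $3$-transitivity only controls the images under $g$ of three points at a time, so the delicate part is to organise the choice of the $g_i$ and of the witnessing points so that finitely many controlled points force \emph{every} commutator to be non-trivial, rather than only some of them — this is precisely where the lower bound of $3$ on the orbit sizes (as opposed to merely $2$) is used. Once a single such $g$ is exhibited it contradicts formula (\ref{eq-identity}), so no such $\Omega$ can exist. Should the direct construction prove too rigid, the natural fallback is the mechanism of Lemmas \ref{lem-stab-orbits-size3} and \ref{lem-class-ntrans-type}: use (\ref{eq-identity}) to show that some point stabilizer $G_\omega$ contains a non-trivial normal subgroup of $G$, which by primitivity of the $2$-transitive action must be transitive, giving the same contradiction.
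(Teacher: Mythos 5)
Your setup is fine (conjugacy of the stabilizers, general type passing from $G_\Delta$ to $G_\omega$, minimality via Lemma \ref{lem-class-ntrans-type}, availability of $g_1,g_2,g_3$ supported in disjoint half-trees), but the core of your argument is both missing and aimed in a direction that cannot succeed. You propose to contradict (\ref{eq-identity}) by constructing a single $g\in G$ for which all nine commutators $[g_i^g,g_j]$ are non-trivial. But (\ref{eq-identity}) is \emph{unconditionally} true for every $g\in G$: it follows from Lemma \ref{lem-tree-moves}, which is pure tree geometry and makes no reference to $\Omega$ or to any transitivity hypothesis. For any $g\in\autT$, either some $A_i$ is mapped off itself (so $[g_i^g,g_i]=1$ by disjointness of supports) or $g$ preserves all three half-trees (so all six cross-commutators vanish). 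No amount of positioning via the permutation action can override this, because $3$-transitivity on $\Omega$ places no constraint whatsoever on how $g$ moves the half-trees $A_i$ in $\T$. The obstacle you flag as ``delicate'' --- that $3$-transitivity only controls three points while your nine witness conditions require controlling $g$ on far more --- is therefore not a technical wrinkle but a symptom of the fact that the object you are trying to build does not exist. Your fallback sentence (produce a normal subgroup inside $G_\omega$) is the mechanism of Lemma \ref{lem-stab-orbits-size3}, but you give no indication of how (\ref{eq-identity}) would yield it here, and the paper does not argue that way either.

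The paper's proof uses the identity \emph{positively} rather than trying to violate it, and the contradiction it reaches is with $3$-transitivity, not with (\ref{eq-identity}). Concretely: since $G_A$ is non-abelian, it contains an element $g_1$ of order $\neq 2$, hence $g_1$ has an orbit $\omega_1\mapsto\omega_2\mapsto\omega_3$ of size $3$ in $\Omega$. Set $\Delta=\{\omega_1,\omega_2\}$. Using that $G_\Delta$ is minimal and of general type, conjugate $g_1$ by elements $u_2,u_3\in G_\Delta$ to obtain $g_2,g_3$ supported in half-trees disjoint from each other and from that of $g_1$. The crucial point is that these conjugates inherit the arithmetic on $\Omega$: since $u_i$ fixes $\omega_1,\omega_2$, each $g_i$ sends $\omega_1\mapsto\omega_2$, and since the $g_i$ pairwise commute (disjoint supports in $\T$), each also sends $\omega_2\mapsto\omega_3$. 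Now for \emph{any} $g\in G_\Delta$, formula (\ref{eq-identity}) provides $i,j$ with $[g_i^g,g_j]=1$; then $g_i^g$ sends $\omega_1\mapsto\omega_2$, commutation with $g_j$ forces $g_i^g(\omega_2)=\omega_3$, and unwinding $g_i^g(\omega_2)=g(g_i(\omega_2))=g(\omega_3)$ gives $g(\omega_3)=\omega_3$. Thus the fixator of $\{\omega_1,\omega_2\}$ fixes $\omega_3$, contradicting $3$-transitivity. This configuration --- conjugating a single element by the stabilizer of the pair so that the tree supports become disjoint while the action on the three marked points of $\Omega$ is preserved --- is the idea your proposal lacks.
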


\begin{proof}
Argue by contradiction and assume that $\Omega$ is a set on which $G$ acts faithfully and $3$-transitively, and such that $G_\Delta$ is of general type. For $\omega \in \Omega$, observe that the action of $G_\omega$ on $\T$ is minimal by Lemma \ref{lem-class-ntrans-type}. Since the action of $G_\omega$ on the remaining points in $\Omega$ is $2$-transitive, we may now apply Lemma \ref{lem-class-ntrans-type} to the group $G_\omega$, and we deduce that for $\Delta \in \Omega^{\{ 2\}}$ the action of $G_\Delta$ on $\T$ is minimal.

Since the $G$-action on $\partial \T$ is not topologically free, we may find a non-trivial element $g_1 \in G$ supported inside a half-tree $A$ of $\T$. It is easy to see that the subgroup $G_{A}$ cannot be abelian, and hence cannot have all its elements of order two. Hence we may assume that $g_1$ does not have order two, so that $g_1$ admits an orbit of size at least three in $\Omega$. Let $\omega_1,\omega_2,\omega_3$ be distinct points such that $\omega_2 = g_1(\omega_1)$ and $\omega_3 = g_1 (\omega_2)$, and write $\Delta = \left\{\omega_1,\omega_2\right\}$. Since the action of $G_\Delta$ is minimal and of general type, we may find elements $u_2,u_3 \in G_\Delta$ such that if we let $g_2 := u_2 g_1 u_2^{-1}$ and $g_3 := u_3 g_1 u_3^{-1}$, then $g_1,g_2,g_3$ are supported in disjoint half-trees. Observe that $g_2 (\omega_1) = \omega_2$ because $u_2$ fixes both $\omega_1$ and $\omega_2$, and since $g_2$ and $g_1$ commute (because they act on $\T$ with disjoint supports), we also have $g_2(\omega_2) = g_1(\omega_2) = \omega_3$. For the same reason $g_3 (\omega_1) = \omega_2$ and $g_3(\omega_2) = \omega_3$.

Now let $g \in G$ such that $g(\omega_1) = \omega_1$ and $g(\omega_2) = \omega_2$. According to Lemma \ref{lem-mix-id}, there are $i,j$ such that $[g_i^g,g_j]=1$. The element $g_i^g$ sends $\omega_1$ to $\omega_2$, and commutes with $g_j$, so again $g_i^g$ sends $\omega_2$ to $\omega_3$. But $g_i^g(\omega_2) = g g_i (\omega_2) = g(\omega_3)$, so we must have $\omega_3 = g(\omega_3)$. We have thus shown that the fixator of two points $\omega_1,\omega_2$ in $G$ fixes a third point, which is clearly a contradiction with the assumption that the action is $3$-transitive.
\end{proof}

\begin{prop} \label{prop-trees-GDelta-focal}
There does not exist a set $\Omega$ on which $G$ acts faithfully and $3$-transitively and such that for $\Delta \in \Omega^{\{ 2\}}$, the action of the subgroup $G_\Delta$ on $\T$ is focal.
\end{prop}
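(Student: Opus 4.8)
\emph{Setup and first reductions.} The plan is to mimic the strategy of Lemma~\ref{lem-trees-GDelta-gen}: assuming such an $\Omega$ exists, I will manufacture three pairwise commuting elements all sending a point $\omega_1$ to a point $\omega_2$, and then feed them into the universal formula \eqref{eq-identity} of Lemma~\ref{lem-mix-id} to force $G_\Delta$ to fix a third point, contradicting $3$-transitivity. First I would record that, as in Lemma~\ref{lem-trees-GDelta-gen}, applying Lemma~\ref{lem-class-ntrans-type} twice shows $G_\omega$ and $G_\Delta$ act minimally on $\T$. Next I would dispose of an easy case: if the point stabilizer $G_\omega$ were itself focal, fixing a unique end $\eta_\omega$, then $G_{\{\omega,\omega'\}}=G_\omega\cap G_{\omega'}$ would fix $\eta_\omega$, and focality would force $\xi(\{\omega,\omega'\})=\eta_\omega$ for every $\omega'$; by symmetry $\eta_\omega=\eta_{\omega'}$ for all pairs, so $G$ would fix an end, contradicting general type. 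Hence I may assume every $G_\omega$ is of general type. This buys two things: Lemma~\ref{lem-stab-orbits-size3} now applies (so every orbit of $G_A$ in $\Omega$ has cardinality at least $3$), and for each $\omega$ the map $\Phi_\omega\colon\Omega\setminus\{\omega\}\to\partial\T$, $\alpha\mapsto\xi(\{\omega,\alpha\})$, is $G_\omega$-equivariant into a single orbit; since $G_\omega$ acts $2$-transitively on $\Omega\setminus\{\omega\}$, this map is either constant (impossible, as $G_\omega$ would fix an end) or injective. In particular the three ends $\xi_{12},\xi_{13},\xi_{23}$ attached to any three points are pairwise distinct.

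\emph{The commuting elements.} The crucial difference with Lemma~\ref{lem-trees-GDelta-gen} is that the conjugators producing the commuting family must fix $\omega_1,\omega_2$, i.e.\ lie in $G_\Delta$, which is now \emph{focal} rather than of general type. So the transitivity used to spread a half-tree into disjoint copies is unavailable, and I must replace it by the dynamics of a single translation. Concretely: using Lemma~\ref{lem-stab-orbits-size3} I would choose a non-trivial $g_1\in G_A$ of order $>2$ with a length-$\geq 3$ orbit $\omega_1\to\omega_2\to\omega_3$ in $\Omega$, set $\Delta=\{\omega_1,\omega_2\}$ and let $\xi=\xi(\Delta)$ be the fixed end of $G_\Delta$. \emph{Provided} $\xi\notin\partial A$, minimality of the focal group $G_\Delta$ furnishes a hyperbolic $t\in G_\Delta$ whose repelling end lies in $\partial A^c$; then the axis of $t$ (a geodesic joining two ends of $A^c$) avoids the interior of $A$, so $A$, $tA$, $t^2A$ hang off this axis at three distinct points and are pairwise disjoint. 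Putting $g_2=tg_1t^{-1}$, $g_3=t^2g_1t^{-2}$, these are supported in disjoint half-trees, hence pairwise commute, and since $t\in G_\Delta$ fixes $\omega_1,\omega_2$ each $g_i$ still sends $\omega_1\mapsto\omega_2$ and $\omega_2\mapsto\omega_3$. Applying \eqref{eq-identity} to an arbitrary $g\in G_\Delta$ yields $i,j$ with $[g_i^g,g_j]=1$; exactly as in Lemma~\ref{lem-trees-GDelta-gen} this forces $g(\omega_3)=\omega_3$, so $G_\Delta$ fixes $\omega_3$, the desired contradiction.

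\emph{The obstacle: avoiding the fixed end.} Everything therefore reduces to arranging that the support half-tree $A$ does not contain the end $\xi=\xi(\Delta)$; note this is genuinely necessary, since any element of the focal group $G_\Delta$ fixes $\xi$ and hence cannot move a half-tree containing $\xi$ off itself. Here I would exploit the following dichotomy along the bi-infinite $g_1$-orbit $(\omega_k)$: writing $\xi_k=\xi(\{\omega_k,\omega_{k+1}\})$, equivariance gives $g_1\xi_k=\xi_{k+1}$, while $g_1$ preserves $\partial A$ and fixes $\partial A^c$ pointwise. Thus if some $\xi_k\in\partial A^c$ then it is $g_1$-fixed, all the $\xi_k$ coincide with it, and this common end lies off $A$ — the good case, in which the construction above applies verbatim. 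The remaining (bad) case is that \emph{all} $\xi_k\in\partial A$. Resolving this is the step I expect to be the main obstacle. My intended approach is to show that the required transposition-move can always be realised by an element supported away from $\xi$: letting $P_\xi\le G$ denote the subgroup of elements whose support is contained in some half-tree missing $\xi$, it suffices to prove $\omega_2\in P_\xi\,\omega_1$, for then a finite product realising this lies in a single half-tree $B$ with $\xi\notin\partial B$, and we may run the argument of the previous paragraph with $B$ in place of $A$. Establishing $\omega_2\in P_\xi\,\omega_1$ is the delicate point: one knows that $M=\langle G_A:\ A\ \text{a half-tree}\rangle$ is a non-trivial normal subgroup, hence transitive by primitivity of the $2$-transitive action, and that $P_\xi$ is normalised by $G_\xi$ (in particular by $G_\Delta$ and by the involution swapping $\omega_1,\omega_2$); the goal is to leverage this together with the focal/germ structure at $\xi$ to rule out $\omega_1,\omega_2$ lying in distinct $P_\xi$-orbits. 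I would expect the bad case to require the most care, and it is the part of the argument where the one-dimensionality of the tree and the fine structure of the focal stabilizer must be used most delicately.
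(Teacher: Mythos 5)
Your preliminary reductions are sound: ruling out focal point stabilizers (so that every $G_\omega$ is of general type and Lemma~\ref{lem-stab-orbits-size3} applies) is correct, and your \emph{good case} can indeed be made to work. If the support half-tree $A$ of $g_1$ satisfies $\xi(\Delta)\notin\partial A$, then minimality of the focal group $G_\Delta$ does yield a hyperbolic $t\in G_\Delta$ whose axis avoids $A$ (otherwise every hyperbolic element of $G_\Delta$ would have its second endpoint in $\partial A$, and the union of their axes would be a proper invariant subtree), so $A$, $tA$, $t^2A$ are pairwise disjoint and the triple $g_1,\,tg_1t^{-1},\,t^2g_1t^{-2}$ fed into \eqref{eq-identity} produces the contradiction exactly as in Lemma~\ref{lem-trees-GDelta-gen}.

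The genuine gap is the \emph{bad case}, which you yourself flag as unresolved: when the fixed end $\xi(\Delta)$ lies in $\partial A$ for the available supports, you need to realize the move $\omega_1\mapsto\omega_2$ by an element supported in a half-tree missing $\xi(\Delta)$, i.e.\ to prove $\omega_2\in P_\xi\,\omega_1$, and you offer only a plan (``leverage normality and primitivity''), not an argument. This is precisely the crux of the proposition, and it is where the paper invests essentially all of its effort: it introduces the stabilizer $S_\Delta$ of $\xi(\Delta)$ and proves in sequence that $S_\Delta\neq G_{(\Delta)}$ and acts transitively on $\Omega$ (Lemma~\ref{lem-tree-Strans}), that the fiber $\pi^{-1}(\xi(\Delta))$ is a partition of $\Omega$ into blocks of size $2$ on which $S_\Delta$ acts $2$-transitively (Lemma~\ref{lem-tree-SDelta-2tran}), and the germ decomposition $S_\Delta=S_\Delta^0G_\Delta$ (Lemma~\ref{lem-tree-S-germs}); only with all of this in hand does it obtain the local statement (Lemma~\ref{lem-tree-loc-exch}) that an element exchanging the two points of $\Delta$ exists in $G_A$ for \emph{every} half-tree $A$ --- which is the content of your $P_\xi$-claim. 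Note moreover that even granting that claim, your endgame would still need repair: the element so produced may simply swap $\omega_1$ and $\omega_2$, hence have an orbit of size $2$ rather than the size-$\geq 3$ orbit your application of \eqref{eq-identity} requires; the paper avoids this by a different endgame, playing a $3$-cycle on blocks (Lemma~\ref{lem-tree-loc-3cyc}) against a commuting exchange supported in a disjoint half-tree. As it stands, the proposal does not prove the proposition.
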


\begin{proof}
Again we argue by contradiction. Since the subgroup $G_{(\Delta)}$ contains $G_{\Delta}$ as a subgroup of index $2$, the action of $G_{(\Delta)}$ on $\T$ is also focal. We denote by $\xi(\Delta) \in \partial \T$ the unique end of $\T$ that is fixed by $G_{(\Delta)}$. Note that the map \[ \pi: \Omega^{\{ 2\}} \to \partial \T, \, \, \Delta \mapsto \xi(\Delta), \] is $G$-equivariant. For simplicity we will denote by $S_{\Delta}$ the stabilizer of $\xi(\Delta)$ in $G$. By definition $G_{(\Delta)} \leq S_{\Delta}$.

The proof of the proposition will consist in a series of lemmas that will lead us to a contradiction.

\begin{lem} \label{lem-tree-Strans}
Let $\Delta \in \Omega^{\{ 2\}}$. We have $G_{(\Delta)} \neq S_{\Delta}$, and $S_{\Delta}$ acts transitively on $\Omega$.
\end{lem}

\begin{proof}
Suppose that $S_{\Delta}$ is equal to $G_{(\Delta)}$. For every half-tree $A$ not containing the point $\xi(\Delta)$, the subgroup $G_A$ fixes $\xi(\Delta)$, and hence $G_A \leq G_{(\Delta)}$. So $G_A$ has an orbit in $\Omega$ of size at most $2$, which contradicts Lemma \ref{lem-stab-orbits-size3}. So $G_{(\Delta)} \neq S_{\Delta}$. The fact that $S_{\Delta}$ acts transitively on $\Omega$ follows immediately because $G_{(\Delta)}$ acts transitively on the complement of $\Delta$ in $\Omega$ by $3$-transitivity of the $G$-action on $\Omega$.
\end{proof}

\begin{lem} \label{lem-tree-SDelta-2tran}
For every $\Delta \in \Omega^{\{ 2\}}$, the fiber $\pi^{-1}(\xi(\Delta))$ forms a partition $\mathcal{P}_\Delta$ of $\Omega$ into blocks of size $2$, and $S_{\Delta}$ preserves $\mathcal{P}_\Delta$ and acts $2$-transitively on its blocks.
\end{lem}

\begin{proof}
Fix $\Delta \in \Omega^{\{ 2\}}$. We first argue that two distinct $\Delta', \Delta'' \in \pi^{-1}(\xi(\Delta))$ are always disjoint. Assume this is not the case. Then $\Delta' \cap \Delta''$ is a singleton, say $\Delta' \cap \Delta'' = \left\{w\right\}$. The action of $G_w$ on $\Omega \setminus \left\{w\right\}$ being $2$-transitive, the subgroup generated by $G_{\Delta'}$ and $G_{\Delta''}$ is equal to $G_w$. Since $G_{\Delta'}$ and $G_{\Delta''}$ both fix $\xi(\Delta)$ by definition of $\Delta', \Delta''$, it follows that the subgroup $G_w$ also fixes $\xi(\Delta)$. This contradicts our assumption that the action of $G_w$ on $\T$ is of general type. 

In order to see that $\pi^{-1}(\xi(\Delta))$ indeed defines a partition of $\Omega$, it is therefore enough to see that the pairs in this fiber cover $\Omega$. The union $\Sigma$ of these pairs defines a subset of $\Omega$ that is $S_{\Delta}$-invariant by equivariance of the map $\pi$. But the subgroup $S_{\Delta}$ is transitive on $\Omega$ by Lemma \ref{lem-tree-Strans}, so we must have  $\Sigma = \Omega$, as desired.

That the subgroup $S_{\Delta}$ preserves $\mathcal{P}_\Delta$ is clear. The action of $S_{\Delta}$ on the blocks of $\mathcal{P}_\Delta$ is transitive because the action of $S_{\Delta}$ on $\Omega$ already is, so in order to show that this action is $2$-transitive we need to show that $G_{(\Delta)}$ acts transitively on $\mathcal{P}_\Delta \setminus \left\{ \Delta\right\}$. Let $\Delta', \Delta'' \in \mathcal{P}_\Delta$ which are distinct from $\Delta$. Choose $\omega' \in \Delta'$ and $\omega'' \in \Delta''$. Since $\Delta', \Delta''$ are disjoint from $\Delta$ we have $\omega', \omega'' \in \Omega \setminus \left\{ \Delta\right\}$. Since $G_{\Delta}$ acts transitively on $\Omega \setminus \Delta$, there exists $g \in G_{\Delta}$ such that $g(\omega') = \omega''$. Since $G_{\Delta}$ preserves the blocks, it follows that we must have $g(\Delta') = \Delta''$, and hence $G_{\Delta}$ acts transitively on $\mathcal{P}_\Delta \setminus \left\{ \Delta\right\}$. A fortiori the same is true for $G_{(\Delta)}$.
\end{proof}

In the sequel we will denote by $S_{\Delta}^0$ the subgroup of $S_{\Delta}$ consisting of elements having trivial germs around the point $\xi(\Delta)$, i.e.\ elements $g \in S_{\Delta}$ such that there exists a half-tree $A \subset \T$ such that $\xi(\Delta) \in \partial A$ and $g$ acts trivially on $A$. Observe that $S_{\Delta}^0$ is a normal subgroup of $S_{\Delta}$, which is non-trivial by the assumption that the $G$-action on $\partial \T$ is not topologically free.

\begin{lem} \label{lem-tree-S-germs}
For every $\Delta \in \Omega^{\{ 2\}}$, we have $S_{\Delta}^0 G_{\Delta} = S_{\Delta}$.
\end{lem}

\begin{proof}
According to Lemma \ref{lem-tree-SDelta-2tran}, the action of $S_{\Delta}$ on the blocks of the partition $\mathcal{P}_\Delta$ is $2$-transitive. In particular this action is primitive. Since $S_{\Delta}^0$ is a normal subgroup of $S_{\Delta}$, it follows that the action of $S_{\Delta}^0$ on these blocks is either trivial or transitive. If this action was trivial then $S_{\Delta}^0$ would be an elementary abelian $2$-group since all blocks have size $2$, which is absurd. So this action is transitive, and it follows that $S_{\Delta}^0$ has at most $2$ orbits in $\Omega$ and each orbit intersects each block of $\mathcal{P}_\Delta$. We claim that this implies that the subgroup $S_{\Delta}^0 G_{\Delta}$ acts transitively on $\Omega$ (note that $S_{\Delta}^0 G_{\Delta}$ is indeed a subgroup because $G_{\Delta}$ normalizes $S_{\Delta}^0$). If $S_{\Delta}^0$ has only one orbit then this is clear. If $S_{\Delta}^0$ has two orbits, then the subgroup $G_{\Delta}$ clearly does not preserve the partition of $\Omega$ into these two orbits because $G_{\Delta}$ acts transitively on $\Omega \setminus \Delta$, so it follows that in any case $S_{\Delta}^0 G_{\Delta}$ has only one orbit. Since for $w \in \Delta$ we have equality $G_{w} \cap S_{\Delta} = G_{\Delta}$, it follows that $S_{\Delta}^0 G_{\Delta}$ is a subgroup of $S_{\Delta}$ that is transitive on $\Omega$ and that contains the stabilizer of a point, so finally $S_{\Delta}^0 G_{\Delta} = S_{\Delta}$.
\end{proof}

\begin{lem} \label{lem-tree-loc-exch}
Fix $\Delta \in \Omega^{\{ 2\}}$. Then for every half-tree $A \subset \T$ there exists $g \in G_A$ such $g$ exchanges the two elements of $\Delta$.
\end{lem}

\begin{proof}
Choose an element $g \in G$ such that $g$ exchanges the two elements of $\Delta$. According to Lemma \ref{lem-tree-S-germs}, there exist $h \in S_{\Delta}^0$ and $h' \in G_{\Delta}$ such that $g = h h'$. So the element $h$ also exchanges the two elements of $\Delta$, and since $h \in S_{\Delta}^0$ there exists a half-tree $B$  such that $h$ is trivial outside $B$ and $\xi(\Delta) \notin \partial B$.

Now let $A$ be an arbitrary half-tree. Without loss of generality we may assume that $A$ is disjoint from $B$ and  $\xi(\Delta) \notin \partial A$. Since the action of $G_{\Delta}$ on $\T$ is minimal and focal, it is possible to find a translation $\gamma \in G_{\Delta}$ such that the axis $L_\gamma$ of $\gamma$ intersects $A$ along an infinite geodesic ray, and $L_\gamma \cap B = \emptyset$. It follows that, upon passing to a suitable power of $\gamma$, we may assume that $\gamma(B) \subset A$, so that the element $\gamma h \gamma^{-1}$ is trivial outside $A$. Note that this element also exchanges the two elements of $\Delta$ since $\gamma \in G_{\Delta}$, so we are done.
\end{proof}

\begin{lem} \label{lem-tree-loc-3cyc}
Fix $\Delta \in \Omega^{\{ 2\}}$ and $\Delta' \in \mathcal{P}_\Delta$ distinct from $\Delta$. Then there exists a half-tree $A$ such that $\xi(\Delta) \notin \partial A$ and $s \in G_A$ such that $s(\Delta) = \Delta'$ and $s(\Delta') \neq \Delta$.
\end{lem}

\begin{proof}
Since the group $S_{\Delta}^0$ is not an extension of two elementary abelian $2$-groups, there exists an element of $S_{\Delta}^0 $ having a cycle of length at least $3$ for the action of $S_{\Delta}^0$ on $\mathcal{P}_\Delta$, i.e.\ there exist an element $s_0$ and distinct $\Delta_1, \Delta_2, \Delta_3 \in \mathcal{P}_\Delta$  such that $s_0(\Delta_i) = \Delta_{i+1}$, $i=1,2$. Note that $s_0$ is indeed supported inside a half-tree that does not contain $\xi(\Delta)$. Now since the $S_{\Delta}$-action on $\mathcal{P}_\Delta$ is $2$-transitive by Lemma \ref{lem-tree-SDelta-2tran}, we may find $g \in S_{\Delta}$ such that $g(\Delta_1) = \Delta$ and $g(\Delta_2) = \Delta'$, and it follows that $s := g s_0 g^{-1}$ satisfies the conclusion.
\end{proof}

We shall now terminate the proof of the proposition. Let $\Delta, \Delta'$ and $s$ be as in Lemma \ref{lem-tree-loc-3cyc}, and let $\Lambda \in \Omega^{\{ 2\}}$ such that $ \Lambda \cap \Delta$ and $ \Lambda \cap \Delta'$ are non-empty. Choose a half-tree $B$ that is disjoint from $A$ and such that $\partial B$ contains neither $\xi(\Delta)$ nor $\xi(\Lambda)$. We apply Lemma \ref{lem-tree-loc-exch} to the point $\xi(\Lambda)$ and find $t$ supported in $B$ such that $t$ exchanges the two elements of $\Lambda$. By construction $t$ fixes $\xi(\Delta)$, and hence $t$ must preserve the partition $\mathcal{P}_\Delta$. Since $t$ exchanges two elements of $\Delta$ and $\Delta'$, it follows that $t$ actually exchanges the blocks $\Delta$ and $\Delta'$. But $t$ and $s$ are supported in disjoint half-trees, and hence commute, and we have a contradiction with $s(\Delta') \neq \Delta$.
\end{proof}

\begin{proof}[Proof of Theorem \ref{thm-intro-tree}]
Assume that $\Omega$ is a set on which $G$ acts faithfully and 3-transitively.  According to Lemma \ref{lem-class-ntrans-type}, for $w \in \Omega$, the action of $G_w$ on $\T$ is minimal, and is either focal or of general type. Assume that $G_w$ is of general type. Since $G_w$ acts $2$-transitively on $\Omega \setminus \left\{w\right\}$, applying Lemma \ref{lem-class-ntrans-type} again we see that for $\Delta \in \Omega^{\{ 2\}}$, the action of $G_\Delta$ on $\T$ is focal or of general type. But we have shown that none of these are possible, respectively in Lemma \ref{lem-trees-GDelta-gen} and in Proposition \ref{prop-trees-GDelta-focal}. Therefore we have reached a contradiction, and it follows that the action of $G_w$ on $\T$ must be focal. If $\xi(w)$ is the unique point of $\partial \T$ that is fixed by $G_w$, then the map $\pi: \Omega \to \partial \T$, $w \mapsto \xi(w)$, is injective and $G$-equivariant, and hence the $G$-action on $\Omega$ is conjugate to the action on $\mathcal{O} = \pi(\Omega)$. This terminates the proof.
\end{proof}

\begin{proof}[Proof of Corollary \ref{cor-intro-tree}]
Suppose that the action of $G \leq \autT$ on $\T$ is minimal and of general type, and that the action of $G$ on $\partial \T$ is not topologically free. Assume that there exists a set $\Omega$ on which $G$ acts faithfully and $4$-transitively. Then this action is in particular $3$-transitive, so it follows from Theorem \ref{thm-intro-tree} that the action of $G$ on $\Omega$ is conjugate to the action of $G$ on one of its orbits $\mathcal{O}$ in $\partial \T$. But then the stabilizer of three distinct points in $\mathcal{O}$ must fix a vertex $v$ of $\T$ (the center of the tripod defined by the three ends), and hence preserves any visual metric in $\partial \T$ associated to $v$. This clearly implies that this subgroup cannot act transitively on the remaining points of $\mathcal{O}$, which is a contradiction with $4$-transitivity.

Assume now that $\T$ is a regular tree and $G \leq \aut(\T_d)^+$. As before any faithful $3$-transitive action of $G$ must be conjugate to the action on one orbit $\mathcal{O}$ in $\partial \T$. Now the stabilizer of two distinct points of $\mathcal{O}$ preserves a bi-infinite geodesic line $L$, and all its elements have even translation length since $G \leq \aut(\T_d)^+$. Hence it follows that if $\xi,\xi'$ are two distinct points of $\mathcal{O}$ whose projections on $L$ are adjacent vertices (such points exist by the assumptions that the tree is regular and the action is minimal), then no element of $G$ can send $\xi$ to $\xi'$ while stabilizing $L$. So the $G$-action on $\mathcal{O}$ is not $3$-transitive.
\end{proof}

\section{Groups acting on the circle} \label{sec-circle}

\subsection{Preliminaries on group actions on the circle} \label{s-circle-prel}

We let $\homeo(\Sbb^1)$ be the group of homeomorphisms of the circle, and $\homeo^+(\Sbb^1)$ its subgroup of index 2 consisting of orientation preserving ones. Similarly for group $G \leq \homeo(\Sbb^1)$, we write $G^+ := G \cap \homeo^+(\Sbb^1)$. Recall that for $G\leq \homeo(\Sbb^1)$, the following trichotomy holds: either $G$ has a finite orbit in $\Sbb^1$; or $G$ acts minimally of $\mathbb{S}^1$, or there exists a unique closed non-empty minimal $G$-invariant subset $K\subset \Sbb^1$, homeomorphic to a Cantor set (see e.g. \cite[Prop. 5.6]{Ghys-circ}). In the latter situation the set $K$ is called an \textbf{exceptional minimal set}. In the third case, the action is \textbf{semi-conjugate} to a minimal action, meaning that there exists a homomorphism $\varphi\colon G\to \homeo(\Sbb^1)$ whose image acts minimally, and a continuous, surjective degree 1 map $h\colon \Sbb^1\to \Sbb^1$ such that $h\circ g=\varphi(g)\circ h$ for every $g\in G$. Note that the type of $G$ (finite orbit, minimal action, exceptional minimal set) is the same as the one of $G^+$.

The case of a minimal action further splits in three subcases as follows. Recall that a minimal action of $G$ on the circle is \textbf{proximal} (or contracting) if for all open intervals $I, J \subsetneq \Sbb^1$, $J \neq \emptyset$, there exists $g\in G$ such that $g(I)\subset J$. The following result is  a reinterpretation due to Ghys \cite{Ghys-circ} of a result of Margulis \cite{Mar-Tits} (see \S 5.2 in \cite{Ghys-circ}). For $G\le \homeo^+ (\Sbb^1)$, we denote by $\aut_{G}(\Sbb^1)$ the centralizer of $G$ in $\homeo^+(\Sbb^1)$.

\begin{thm} \label{t-Margulis}
Assume that $G\le \homeo^+ (\Sbb^1)$ acts minimally on $\Sbb^1$. Then one of the following holds:

\begin{enumerate}
\item The group $\aut_{G}(\Sbb^1)$ is infinite and  $G$ is abelian and conjugate to a subgroup of the group of rotations.
%\item The group $\aut_{G}(\Sbb^1)$ is trivial and the action of $G$ on $\Sbb^1$ is proximal.
\item The group $\aut_{G}(\Sbb^1)$ is finite cyclic, and the action of $G$ on the topological circle  $\aut_{G}(\Sbb^1) \backslash \Sbb^1$ is proximal.
\end{enumerate}
In particular the action of $G$ on $\Sbb^1$ is proximal if and only if $\aut_{G}(\Sbb^1)$ is trivial.
\end{thm}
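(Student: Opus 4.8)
The plan is to analyse the centralizer $Z := \aut_{G}(\Sbb^1)$ directly and to reduce the whole statement to two classical inputs: H\"older's theorem on free actions on the circle, and Margulis' theorem asserting that a minimal action on $\Sbb^1$ either preserves a probability measure or is proximal. The first move is to observe that $Z$ acts \emph{freely}. Indeed, for $z\in Z$ the set $\fix(z)$ is closed, and since $z$ commutes with every element of $G$ it is $G$-invariant; by minimality of the $G$-action it is empty or all of $\Sbb^1$, so any $z\neq 1$ has no fixed point. By H\"older's theorem a subgroup of $\homeo^+(\Sbb^1)$ acting freely is abelian and its action is semiconjugate, through a monotone degree-one map, to an action by rotations. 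The rotation-number map $\mathrm{rot}\colon Z\to \R/\Z$ is then a homomorphism, and it is injective since $\mathrm{rot}(z)=0$ forces a fixed point, hence $z=1$ by freeness. Thus $Z$ embeds into $\R/\Z$ and is therefore either finite cyclic or infinite; in the infinite case $\mathrm{rot}(Z)$ is dense.

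For the infinite case I would first upgrade the $Z$-action to a minimal one. Being free and infinite, $Z$ has no finite orbit, so by the trichotomy recalled in \S\ref{s-circle-prel} it either acts minimally or admits a \emph{unique} exceptional minimal set $K$. In the latter situation, since each $g\in G$ centralizes $Z$, the set $g(K)$ is again a $Z$-invariant exceptional minimal set, whence $g(K)=K$ by uniqueness; then $K$ is closed, nonempty and $G$-invariant, so $K=\Sbb^1$ by minimality of $G$, a contradiction. Hence $Z$ is minimal, and a minimal action semiconjugate to the (minimal) dense rotation action is in fact conjugate to it. After conjugation $Z$ is a dense group of rotations, and $G$ lies in its centralizer in $\homeo^+(\Sbb^1)$; as any homeomorphism commuting with a dense set of rotations commutes with all of them and is itself a rotation, we get $G\le \mathrm{SO}(2)$. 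So $G$ is abelian and conjugate to a group of rotations, which is case (1). Conversely a nontrivial group of rotations has all of $\mathrm{SO}(2)$ in its centralizer, so $Z$ is infinite exactly when $G$ is conjugate to rotations, equivalently when $G$ preserves a probability measure.

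If instead $Z$ is finite cyclic, the previous paragraph shows $G$ preserves no probability measure. After conjugation $Z=\langle R_{1/n}\rangle$, and the quotient $\bar\Sbb^1:=Z\backslash\Sbb^1$ is again a circle on which $G$ acts (it centralizes $Z$), minimally since minimality passes to quotients. A $\bar G$-invariant probability measure would pull back to a $G$-invariant one, so $\bar G$ preserves none either; by Margulis' theorem the $\bar G$-action is proximal, which is case (2). The main obstacle here is precisely this invocation of Margulis' theorem (minimal and measure-free $\Rightarrow$ proximal): all the surrounding steps are soft, but this implication is the genuinely deep one and is what I would import rather than reprove.

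Finally, for the ``in particular'' equivalence I would argue that a proximal action has trivial centralizer. Fix $z\in Z$ with $z\neq 1$ and a point $x$ with $z(x)\neq x$, and choose a large open interval $I$ containing both $x$ and $z(x)$. Proximality gives $g_n\in G$ with $g_n(I)\subset J_n$ for intervals $J_n$ shrinking to a point $p$, so $g_n(x)\to p$ and $g_n(z(x))\to p$. Using $z g_n = g_n z$ and continuity of $z$, $z(p)=\lim z(g_n(x))=\lim g_n(z(x))=p$, contradicting freeness of $Z$; hence $Z$ is trivial. Conversely, if $Z$ is trivial then by the dichotomy above $G$ preserves no measure, so it is proximal on $Z\backslash\Sbb^1=\Sbb^1$. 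This yields the stated equivalence and completes the plan.
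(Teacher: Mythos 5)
Most of your reduction is sound, and it is worth noting that the paper itself does not prove this theorem: it imports it as Ghys's reinterpretation (\S 5.2 of \cite{Ghys-circ}) of Margulis's result, so there is no internal proof to compare against. Your preliminary steps are correct: the centralizer $Z=\aut_{G}(\Sbb^1)$ acts freely by minimality of $G$, H\"older's theorem plus injectivity of the rotation number shows $Z$ is abelian and either finite cyclic or infinite with dense rotation numbers, the entire infinite-centralizer case (upgrading $Z$ to a minimal, hence conjugate-to-rotations, action and concluding $G\le \mathrm{SO}(2)$ after conjugation) is correct, and so is your argument that a proximal action has trivial centralizer.

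The gap is in the finite-cyclic case, and it is fatal as written: the statement you import as ``Margulis' theorem'' --- that a minimal action on $\Sbb^1$ either preserves a probability measure or is proximal --- is \emph{false}. A counterexample is the lift of $\PSL(2,\R)$, acting on $\mathbb{P}^1(\R)\cong\Sbb^1$, to the double cover of the circle: that action is minimal and preserves no probability measure (an invariant measure would descend to one for $\PSL(2,\R)$), yet it is not proximal, because the deck rotation $\tau$ of order $2$ commutes with the whole group, so $g(\tau x)=\tau(g(x))$ stays antipodal to $g(x)$ for every $g$, and the pair $(x,\tau x)$ can never be contracted. What Margulis actually proved is the Tits alternative for $\homeo(\Sbb^1)$ (invariant measure or free subgroup); the correct trichotomy for minimal actions --- rotations, or proximal, or a \emph{finite cover} of a proximal action --- is exactly Ghys's reinterpretation, i.e.\ the theorem you are trying to prove. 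Consequently your step ``$\bar G$ is minimal and measure-free on $Z\backslash\Sbb^1$, hence proximal'' either invokes a false statement or is circular: applying the true theorem to $\bar G$ would only tell you that the quotient of $\bar\Sbb^1$ by the centralizer of $\bar G$ is proximal, and you would still owe an argument that this second centralizer is trivial (which is not obvious, since a centralizing element downstairs lifts only to a map commuting with $G$ \emph{up to deck transformations}). The missing content is the genuine contraction argument: for a minimal action not conjugate to rotations, one shows that contractible intervals exist and cover $\Sbb^1$, defines for each $x$ the endpoint $\sigma(x)$ of the maximal contractible arc issuing from $x$, checks that $\sigma$ is a monotone map commuting with $G$ and hence (by minimality and your dichotomy on $Z$) a finite-order element of $Z$, and deduces that every proper arc of $Z\backslash\Sbb^1$ is contractible. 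Without this step, or an explicit citation of Ghys's statement as in the paper, case (2) and the reverse implication of the ``in particular'' clause remain unproven.
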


In the course of the proof, we will also use an analogue of Theorem \ref{t-Margulis} for groups of homeomorphisms of the real line. A minimal action of a group $G$ on $\R$ is said to be proximal if for all relatively compact open intervals $I, J\subset \R$, $J \neq \emptyset$, there exists $g\in G$ such that $g(I)\subset J$.  The following result is  \cite[Th.1]{Mal-class} (see the \enquote{Remark on centralizers} for this formulation). As before we denote by $\aut_G(\R)$ the centralizer of $G$ in $\homeo^+(\R)$.

\begin{thm} \label{t-Mar-line}
Let $G\le \homeo^+(\R)$ be a group of orientation preserving homeomorphisms of the real line. Assume that $G$ acts minimally on $\R$. Then exactly one of the following holds:
\begin{enumerate}
\item The group $G$ is abelian and conjugate to a subgroup of the group of translations.
\item $\aut_G(\R)$ is cyclic and generated by an element conjugate to a translation, and the action of $G$ on the topological circle $\aut_G(\R)\backslash \R$ is proximal.
\item The action of $G$ on $\R$ is proximal.

\end{enumerate}
\end{thm}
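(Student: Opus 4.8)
The plan is to read off the three cases from the structure of the centralizer $Z:=\aut_G(\R)$. First I would record that $Z$ acts freely: if $z\in Z$ fixes a point $x$, then $z$ fixes the whole orbit $Gx$, which is dense by minimality, so $z=\mathrm{id}$. A group acting freely on $\R$ by orientation-preserving homeomorphisms is abelian and order-isomorphic to a subgroup of $(\R,+)$ (H\"older's theorem), and each nontrivial element is fixed-point-free with every orbit escaping to $\pm\infty$. Thus $Z$ is trivial, infinite cyclic acting cocompactly, or non-discrete. The three conclusions of the theorem will correspond exactly to these three possibilities, and exclusivity is immediate: a group of translations contracts no interval (so case~(1) is never proximal) and has centralizer the full translation group $\cong\R$, while case~(2) fails to be proximal because an interval projecting onto all of $\aut_G(\R)\backslash\R$ cannot be contracted.

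Next I would dispose of the non-discrete case, showing it gives case~(1). If $Z$ has a non-discrete orbit, then $Z$ acts minimally (no wandering intervals survive a dense orbit), so by the topological H\"older theorem $Z$ is conjugate to a dense group of translations; since $G$ centralizes $Z$, a continuity argument identifies the centralizer of a dense translation group with the whole translation group, so $G$ itself is conjugate to a group of translations. Equivalently, and more robustly, whenever $G$ preserves a Radon measure $\mu$ I would argue directly: by minimality $\mu$ is atomless, of full support, and of infinite mass on each half-line, so $x\mapsto\mu([x_0,x])$ is a homeomorphism of $\R$ conjugating $G$ into the group of Lebesgue-preserving orientation-preserving maps, i.e.\ into translations. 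In either form this is case~(1), with $\aut_G(\R)\cong\R$.

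For the infinite-cyclic case I would pass to the circle and invoke Theorem~\ref{t-Margulis}. Here $Z=\langle z\rangle$ with $z$ conjugate to a translation, the quotient $C:=Z\backslash\R$ is a topological circle, and $G$ descends to a minimal action of $\bar G\le\homeo^+(C)$. Theorem~\ref{t-Margulis} offers two options for $\bar G$. The first (abelian, conjugate to rotations) produces a $\bar G$-invariant probability measure on $C$, which pulls back to a $G$-invariant Radon measure on $\R$ and hence to case~(1), contradicting $Z\cong\Z$. In the second option $\aut_{\bar G}(C)$ is finite cyclic; I would rule out any nontrivial $w\in\aut_{\bar G}(C)$ by a short cocycle computation: a lift $\tilde w$ satisfies $\tilde w\tilde g\tilde w^{-1}=z^{k(g)}\tilde g$ for a homomorphism $k\colon G\to\Z$, and applying this to $\tilde w^{\,n}\in Z$ (with $n$ the order of $w$), which must centralize $G$, forces $nk(g)=0$, hence $k\equiv0$, hence $\tilde w\in Z$ and $w=\mathrm{id}$. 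Therefore $\aut_{\bar G}(C)$ is trivial and $\bar G$ is proximal on $C=\aut_G(\R)\backslash\R$, which is exactly case~(2).

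It remains to treat $Z$ trivial, where I must show the action is proximal (case~(3)); this is the technical heart and the step I expect to be the main obstacle. I would prove the contrapositive: a minimal, non-proximal action produces a nontrivial element of $Z$. The route is the Margulis--Ghys proximal-quotient construction, collapsing the maximal intervals that resist contraction to obtain a $G$-invariant monotone quotient carrying an induced minimal proximal action; when the original action is not proximal this quotient is nontrivial, and minimality forces all fibers to be order-isomorphic, yielding a fiberwise homeomorphism that commutes with $G$, i.e.\ a nontrivial centralizing element. The delicate point, and the reason the line case needs its own treatment rather than a reduction to Theorem~\ref{t-Margulis}, is that the non-compactness of $\R$ (in contrast with $\Sbb^1$) obstructs the naive definition of contractibility and of the collapsing relation, and must be handled through the two-ended compactification, where a compactness argument in the space of measures splits non-proximality into the invariant-measure alternative (already seen to give case~(1), with nontrivial centralizer) or the genuine vertical symmetry of the quotient. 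Verifying that this relation is convex, $G$-invariant, and yields a proximal quotient is the crux of the argument.
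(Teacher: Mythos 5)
The paper itself contains no proof of this statement: it is imported verbatim from Malyutin's classification theorem (the citation \cite[Th.~1]{Mal-class}), so your attempt can only be measured against the literature. Your organization around the centralizer $Z=\aut_G(\R)$ is reasonable, and several pieces are correct and complete: freeness of $Z$ (hence H\"older and the trivial/cyclic/non-discrete trichotomy), mutual exclusivity of the three cases, and especially the infinite-cyclic case, where your lifting argument --- $\tilde w g \tilde w^{-1}=z^{k(g)}g$ with $k\colon G\to\Z$ a homomorphism because $z$ is central, and $\tilde w^{\,n}\in Z$ forcing $k\equiv 0$ --- correctly upgrades the conclusion of Theorem~\ref{t-Margulis} from proximality on $\aut_{\bar G}(C)\backslash C$ to proximality on $C$ itself. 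That part I would accept as written.

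The two remaining implications are where the content of the theorem lives, and neither is established. First, in the non-discrete case the inference ``a non-discrete orbit of $Z$ implies $Z$ acts minimally'' is unjustified, and for a free action considered by itself it is false: a non-discrete orbit is not a dense orbit, and Denjoy-type blow-ups of a dense translation group (e.g.\ $\Z+\alpha\Z$) produce free actions of non-cyclic groups on $\R$ whose orbits are non-discrete yet accumulate only on an invariant Cantor set. Any repair must genuinely use $G$ --- for instance: show the $Z$-minimal set is unique, observe it is $G$-invariant because $Z(gx)=g(Zx)$, and conclude it equals $\R$ by minimality of $G$ --- or must construct a $G$-invariant Radon measure; your ``more robust'' variant assumes such a measure rather than producing one from the non-discreteness hypothesis. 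Second, the implication $Z=1\Rightarrow$ proximal, which you yourself flag as the crux, is only a plan, and the plan misidentifies the mechanism: in the Margulis--Ghys scheme the centralizing element produced from a minimal non-proximal action is not a ``fiberwise homeomorphism'' of a collapsed quotient, but the map $\theta(x)=\sup\{y\geq x:\ [x,y]\ \text{contractible}\}$, where contractibility must mean that some sequence $g_n([x,y])$ converges to a point \emph{of} $\R$ (otherwise shrinking intervals may escape to infinity, which is exactly the non-compactness issue you mention); one must then prove $\theta$ is continuous, strictly increasing, surjective, fixed-point-free and commutes with $G$. Moreover, in the complementary subcase where no interval is contractible one still has to manufacture the invariant Radon measure (e.g.\ from the superadditive $G$-invariant gauge $d(x,y)=\inf_{g\in G}|g([x,y])|$ by taking suprema over partitions) in order to land in case (1). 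As it stands, the proposal is a correct skeleton whose load-bearing steps are missing.
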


\subsection{Proofs of Theorems \ref{thm-intro-circle} and \ref{thm-bis-intro-circle}}
 We fix $G \leq \homeo(\Sbb^1)$ and consider a faithful action of $G$ on a set $\Omega$. In order to avoid any confusion between the action of $G$ on $\Sbb^1$ and the action on $\Omega$, points of $\Sbb^1$ will be denoted by $x, y, z, ...$, while points in $\Omega$ will be denoted $\omega, \omega_1, \omega_2, ...$. The notation $G_x$ and $G_\omega$ refer to the stabilizer in $G$ with respect to the corresponding action.

%\red{ This should not be confused with the notation $G_\Delta$ for the pointwise stabiliser of a pair $\Delta \in \Omega^{\{2\}}$ by the different context.}

Given an interval $I\subset \Sbb^1$, we denote by $G_I$ the subgroup of $G$ consisting of elements that fix pointwise the complement of $I$. Note that the action of $G$ on $\Sbb^1$ is not topologically free if and only if there exists a closed interval $I \subsetneq \Sbb^1$ such that $G_I$ is non-trivial. If in addition $G$ is minimal and proximal, this is equivalent to the fact that $G_J$ is non-trivial for every non-empty open interval $J$. 

%since  $gG_Ig^{-1}=G_{g(I)}$

%The set $\Omega^{\{ 2\}}$ is the set of pairs of distinct elements of $\Omega$.  Given a subset $\Delta\subset \Omega$, the notation $G_\Delta$ refers to its point-wise fixator, and the notation $G_{(\Delta)}$ to its set-wise fixator.  Given an interval $I\subset \Sbb^1$ we denote by $G_I$ the subgroup of $G$ consisting of elements whose support is contained in $I$. 

%\red{
%As in the case of trees, we will proceed by proving several statements that analyse  the type of  dynamics of the action on the circle of the groups $G_\omega$ and $G_\Delta, \Delta \in \Omega^{\{2\}}$, but this time according to the various behaviors of groups of circle homeomorphisms recalled in \S \ref{s-circle-prel}. 
%}

%We note that as soon as the action of $G$ on $\Omega$ is transitive the groups $G_\omega$  are conjugate, and therefore the type of dynamics does not depend on $\omega$.  The same observation holds for the groups $G_\Delta$ as soon as the action is 2-transitive. 

We begin with the following first classification.

 \begin{prop} \label{prop-S1}
Let $G \leq \homeo(\Sbb^1)$ acting minimally and proximally on $\Sbb^1$. Assume that $G$ acts faithfully and 3-transitively on a set $\Omega$. Then exactly one of the following holds:

\begin{enumerate}
\item \label{i-conjugate} For every $\omega\in \Omega$, the group $G_\omega$ fixes a unique point $z(\omega)\in \Sbb^1$, and the map $\omega \mapsto z(\omega)$ conjugates the $G$-action on $\Omega$ to its action on an orbit in $\Sbb^1$. 
\item \label{i-minimal-not-proximal} For every $\omega \in \Omega$, the group $G_\omega$ acts minimally, but not proximally on $\Sbb^1$. Moreover in this case for every $\Delta\in \Omega^{\{ 2\}}$ the action of $G_\Delta$ on $\Sbb^1$ is not minimal.
\item \label{i-minimal-proximal} For every $\omega\in \Omega$, the group $G_\omega$ acts minimally and proximally on $\Sbb^1$.

\end{enumerate}
Moreover if $G$ preserves the orientation then \eqref{i-minimal-proximal} always holds, and for every $\Delta\in \Omega^{\{ 2\}}$ the action of $G_\Delta$ on $\Sbb^1$ is minimal.

%\begin{enumerate}
%\item \label{i-stab} For every $\omega\in \Omega$ the group $G_\omega$ acts minimally and proximally on $\Sbb^1$.
%\item \label{i-2stab} For every $\Delta\in \Omega^{\{ 2\}}$ the group $G_{\Delta}$ acts minimally on $\Sbb^1$. 
%\end{enumerate}
\end{prop}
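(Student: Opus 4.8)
The plan is to take a faithful $3$-transitive action of $G$ on $\Omega$ and analyze, for each point $\omega$, the dynamical type of the subaction of $G_\omega$ on $\Sbb^1$. Since $G$ acts minimally and proximally on $\Sbb^1$ and $3$-transitively on $\Omega$, the point stabilizer $G_\omega$ acts on $\Omega\setminus\{\omega\}$ $2$-transitively, hence with finitely many orbits on products; this is the circle analogue of the hypothesis feeding Lemma~\ref{lem-class-ntrans-type}. First I would establish a rigidity statement playing the role of that lemma: if $G_\omega$ had a finite orbit on $\Sbb^1$ or preserved an exceptional minimal set, one derives a contradiction with the $2$-transitivity of $G_\omega$ on $\Omega\setminus\{\omega\}$, by an argument bounding how far the (semi-conjugacy image of the) minimal set can be moved under double cosets, just as the translation-length computation rules out horocyclic type in the tree case. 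This forces $G_\omega$ to act minimally on $\Sbb^1$ for every $\omega$.

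\smallskip

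Once minimality of each $G_\omega$ is in hand, the trichotomy to be proved is exactly the Margulis--Ghys dichotomy (Theorem~\ref{t-Margulis}) applied to $G_\omega^+$. The plan is: pass to the orientation-preserving part $G_\omega^+$, which still acts minimally, and invoke Theorem~\ref{t-Margulis}. Case~(1) of that theorem says $G_\omega^+$ is abelian, conjugate into the rotation group; I would rule this out because an abelian minimal group on $\Sbb^1$ is topologically free, whereas $G_\omega\supseteq [G_J,G_J]$ for suitable intervals $J$ (using non-topological-freeness of $G$ together with minimality and proximality to push supports into position, as in Lemma~\ref{lem-stab-orbits-size3}), giving non-abelian content inside $G_\omega$ and hence a contradiction — or more directly, an abelian $G_\omega$ would force, via normal-subgroup/primitivity considerations, transitivity on $\Omega$, which is absurd. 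This leaves either $\aut_{G_\omega^+}(\Sbb^1)$ trivial (so $G_\omega$ proximal, case~\eqref{i-minimal-proximal}) or $\aut_{G_\omega^+}(\Sbb^1)$ a non-trivial finite cyclic group (so $G_\omega$ minimal-not-proximal, case~\eqref{i-minimal-not-proximal}).

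\smallskip

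For case~\eqref{i-conjugate}, which occurs precisely when $G_\omega$ is \emph{not} minimal, the non-minimal alternative surviving the first step must be the fixed-point situation: $G_\omega$ fixes a point $z(\omega)\in\Sbb^1$. Here I would use proximality of $G$ to see that $G_\omega$ can fix at most one point (if it fixed two points it would preserve the pair of complementary arcs, and proximality of the ambient $G$ combined with $2$-transitivity of $G_\omega$ on $\Omega\setminus\{\omega\}$ would be violated), so $z(\omega)$ is well-defined; the map $\omega\mapsto z(\omega)$ is then $G$-equivariant, and faithfulness plus $3$-transitivity force it to be injective onto a single orbit, conjugating the two actions. The clauses about $G_\Delta$ follow by applying the same analysis one level down: in case~\eqref{i-minimal-not-proximal}, $G_\omega$ minimal non-proximal forces the pointwise stabilizer $G_\Delta$ to break minimality (the non-trivial cyclic centralizer produces an invariant quotient circle on which a further point-fixing occurs), while the final sentence, when $G$ preserves orientation, records that case~\eqref{i-minimal-proximal} always holds — indeed an orientation-preserving $G$ cannot realize case~\eqref{i-conjugate} since $G_\omega$ fixing a point of $\Sbb^1$ while acting minimally-with-orientation is impossible, and the minimal-non-proximal case is excluded because the cyclic centralizer would be forced trivial — so each $G_\omega^+$, hence $G_\Delta$, remains minimal and in fact proximal.

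\smallskip

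The main obstacle I anticipate is the first step: ruling out exceptional minimal sets and finite orbits for $G_\omega$ cleanly. On a tree the translation-length identity gives an immediate numerical contradiction, but on the circle one must argue through the semi-conjugacy $h$ to a minimal model and control the displacement of the (unique) exceptional minimal set $K$ under the finitely many double-coset representatives; the delicate point is that the ambient $G$ is only proximal, while $G_\omega$ a priori need not be, so one cannot directly reuse proximality of $G_\omega$ and must instead exploit that $G$ itself moves arcs into arbitrarily small ones to contradict invariance of a \emph{bounded family} of positions of $K$. Getting this boundedness argument to interact correctly with the finiteness of $G_\omega\backslash G/G_\omega$ is where the real work lies.
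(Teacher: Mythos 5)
Your overall skeleton (fixed point / minimal non-proximal / minimal proximal, via Theorem~\ref{t-Margulis}) is the same as the paper's, but the two places where you would need an actual mechanism are missing. First, your opening step is internally inconsistent: you claim that ruling out finite orbits and exceptional minimal sets ``forces $G_\omega$ to act minimally for every $\omega$'', yet a fixed point \emph{is} a finite orbit, and case~\eqref{i-conjugate} is exactly the situation where one occurs; the statement you propose to prove is false as stated. More seriously, the method you propose for it --- bounding the displacement of the invariant set over double-coset representatives, in analogy with the translation-length computation of Lemma~\ref{lem-class-ntrans-type} --- has no counterpart on the circle: $\Sbb^1$ is compact, $G_\omega$ preserves no metric, and there is no quantity playing the role of $d(gA,A)$ that could become unbounded. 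The paper's argument is of a different nature and does not use double cosets at all: since the action on $\Omega$ is primitive, $G_\omega$ is \emph{maximal}, hence equals the full setwise stabilizer of any proper closed invariant set $K$ with at least two points; the action on $\Omega$ is then conjugate to the action on the $G$-orbit of $K$, and proximality of $G$ produces configurations of translates (two translates $g(K),h(K)$ inside one complementary component $I$ and one translate $f(K)$ inside another $J$; or $g(K)\subset I$ and $h(K)\subsetneq K$ when $K$ is an interval) that no element of $G_\omega$ can realize, because an element preserving $K$ permutes the complementary components of $K$. You yourself flag this step as ``where the real work lies'', which is an accurate admission that it is not done.

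Second, the ``moreover'' clauses --- which occupy most of the paper's proof --- are asserted rather than proved. (i) In case~\eqref{i-minimal-not-proximal}, non-minimality of $G_\Delta$ does not come from an ``invariant quotient circle'': the paper takes the finite-order element $c$ centralizing $G_\omega^+$, a conjugate $c^g$ centralizing $G_{g(\omega)}^+$, with $g$ chosen by proximality so that $c^gc$ maps a fundamental interval of $c$ strictly inside itself; then $c^gc$ centralizes $G_\Delta^+$ and has non-empty fixed-point set, which $G_\Delta^+$ must preserve. (ii) When $G$ preserves orientation, excluding case~\eqref{i-minimal-not-proximal} is not the one-liner ``the cyclic centralizer would be forced trivial'': by maximality $G_\omega$ is precisely the centralizer of $c$ in $G$, the action on $\Omega$ becomes the conjugation action on the conjugacy class of $c$, and one needs the delicate cyclic-order analysis showing that no element centralizing $c$ can send $(c,c^g,c^h)$ to $(c,c^h,c^g)$, contradicting $3$-transitivity. (iii) Minimality of $G_\Delta$ in the orientation-preserving case does not follow ``hence'' from proximality of $G_\omega$ (note you cannot induct: $G_\omega$ only acts $2$-transitively on $\Omega\setminus\{\omega\}$); the paper proves it via the equivariant map $\Phi$ assigning to each ordered triple of $\Omega$ the triple of invariant closed sets $K(\omega_i,\omega_j)$, showing these sets nest inside complementary components of one another and hence inherit a $G$-invariant cyclic order, which an element fixing $\omega_1$ and exchanging $\omega_2,\omega_3$ would have to reverse. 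Finally, your appeals to non-topological freeness (e.g.\ to $G_J\neq 1$ and Lemma~\ref{lem-stab-orbits-size3}) are not legitimate here, since Proposition~\ref{prop-S1} does not assume the action of $G$ on $\Sbb^1$ is non-topologically free; only your alternative argument --- that a group with an abelian subgroup of index at most $2$ cannot act faithfully and $2$-transitively on an infinite set --- is usable to rule out the rotation case of Theorem~\ref{t-Margulis}.
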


\begin{proof}
The action of $G$ on $\Omega$ is 2-transitive and hence primitive, so $G_\omega$ is a maximal subgroup of $G$. Assume that $G_\omega$ has  unique fixed point $z(\omega)\in \Sbb^1$. It must then be equal to the stabilizer of $z(\omega)$, and we deduce that the $G$-action on $\Omega$ is conjugate to the action on the orbit of $z(\omega)$, and we are in case \eqref{i-conjugate}.

Assume now that case \eqref{i-conjugate} does not hold. Let us first show that $G_\omega$ must be minimal. Assume by contradiction that $K\subset \Sbb^1$ is a proper closed $G_\omega$-invariant subset. Since we are not in case \eqref{i-conjugate}, we can assume that $K$ contains more than one point. Again by maximality, $G_\omega$  must be equal to the setwise stabilizer of $K$ in $G$. It follows that the $G$-action on $\Omega$ is conjugate to the $G$-action on the orbit of $K$. 
In particular, $G_\omega$ must act 2-transitively on the collection $\{g(K)\colon g\in G\}\setminus \{K\}$.  
Assume first that the complement of $K$ contains at least two distinct connected components $I, J$. By proximality of the $G$-action we can find $g, h, f\in G$ such that $g(K), h(K)$ are distinct and contained in $I$, while $f(K)$ is contained in $J$. Since $G_\omega$ preserves $K$, it cannot map the pair $(g(K), h(K))$ to  $(g(K), f(K))$, reaching a contradiction. If the complement of $K$ has a unique connected component $I$, then $K$ is itself a compact interval of non-empty interior. Thus we can find $g, h\in G$ such that $g(K)\subset I$ and $h(K) \subsetneq K$. Again, no element of $G_\omega$ can map $g(K)$ to $h(K)$.  This shows that $G_\omega$ is minimal.

If $G_\omega$ is proximal, then we are in case \eqref{i-minimal-proximal}. Henceforth we assume that $G_\omega$ is not proximal, and we will find $\Delta\in \Omega^{\{ 2\}}$ such  that $G_\Delta$ does not act minimally on the circle (since the groups $G_\Delta$ are pairwise conjugate, this conclusion will automatically hold for every $\Delta$). Since $G_\omega$ admits a 2-transitive faithful action, its index 2 subgroup $G_\omega^+$ is not abelian and thus cannot be conjugate to a group of rotations. By Theorem \ref{t-Margulis} we deduce that $G^+_\omega$ must centralize  a non-trivial element  $c\in \homeo(\Sbb^1)$ of finite order. Upon replacing $c$ with a power we  assume that $c$ is a conjugate to a rotation of an angle $2\pi/n$ for some $n\in \N$.  Let $x_0\in \Sbb^1$ and $ x_i=c^i(x_0)$ for $i=1,\ldots, n-1$. Note that $x_0,\ldots , x_{n-1}$ are cyclically ordered and that each interval $[x_i, x_{i+1})$ ($i$ mod $n$) is a fundamental domain for $c$. By proximality we can choose $g\in G$ such that the points $x_i':=g(x_i)$ all belong to the interval $[x_0, x_1)$ and are cyclically ordered as $x_0< x'_0<\cdots<x'_{n-1}<x_1$. 
Note that $c^g$ centralizes $G_{g(\omega)}^+$ and therefore both $c$ and $c^g$ centralize $G_\Delta^+$ for $\Delta:=\{\omega, g(\omega)\}$. 
In particular, so does their product $c^g c$. Now note that $c^g c([x_0, x_1) )=c^g([x_1, x_2)) \subset c^g([x'_{n-1}, x'_0))= [x'_0, x'_1)$ is a strict subinterval of $[x_0, x_1)$. In particular $c^gc$ must admit fixed points in this interval. It follows that $G^+_\Delta$ must preserve the (closed) set of fixed points of $c^gc$ and thus does not act minimally on $\Sbb^1$. Therefore $G_\Delta$ does not act minimally either, and we are in case \eqref{i-minimal-not-proximal}. This concludes the proof of the first part of the proposition.

Assume now that $G$ is orientation preserving. Then case \eqref{i-conjugate} cannot arise, since the action of $G$ on any of its orbit in $\Sbb^1$ preserves the cyclic order and thus is never $3$-transitive. Let us show that $G_\omega$ is necessarily proximal. By contradiction, let again $c\in \homeo_+(\Sbb^1)$ be a finite order element which centralizes $G_\omega$ and is conjugate to a rotation of angle $2\pi/n$, and $x_i$ be points as above. Since $G$ is proximal, it does not centralize $c$, and thus by maximality  we deduce that $G_\omega$ is precisely the centralizer of $c$ in $G$. We therefore have an injective equivariant map $\Omega\simeq G/G_\omega\to \homeo(\Sbb^1), gG_\omega\mapsto gcg^{-1}$, and we deduce that the conjugation action of $G$ on the conjugacy class of $c$ is 3-transitive. Using proximality and minimality of the action of $G$  we can find $g, h\in G$  such that the points $x'_i=g(x_i)$ and $x''_i=h(x_i)$ lie in $[x_0, x_1)$ and are ordered cyclically as  \[x_0< x'_0<\cdots<x'_{n-1}< x''_0<\cdots < x''_{n-1}< x_1.\] Set $c'=c^g$ and $c''=c^h$, so that the points $x'_i$ form a $c'$-orbit and the points $x''_i$ form an $c''$-orbit. We claim that $G$ cannot map $(c, c', c'')$ to $(c, c'', c')$. To see this, we assume that $k\in G$ centralizes $c$ and satisfies $(c')^k=c'', (c'')^k=c'$, and we analyse the relative position of the points \[k(x_0)< k(x'_0)<\cdots< k(x'_{n-1})< k(x''_0)<\cdots< k(x''_{n-1})< k(x_1)\] with respect to the points \[x_0< x'_0<\cdots<x'_{n-1}< x''_0<\cdots < x''_{n-1}< x_1.\]  We have $c'(k(x''_i))=k(x''_{i+1})$ and  $c''(k(x'_i))=k(x'_{i+1})$  (mod $n$).
 Since $[x'_0, x'_1)$ is a fundamental domain for $c'$, we deduce that there exists $i$ such that $k(x_i'')\in [x'_0, x'_1)$, and the same argument applied to $c''$ shows that there exists $j$ such that $k(x_j')\in [x_0'', x_1'')$. Permuting cyclically the inequality $k(x_0)< k(x'_j)<k(x''_i)< k(x_1)$  we see that this implies that 
\[x_0<x''_0\leq k(x''_i)<k(x_1)<k(x_0)<k(x''_i)< x'_1<x_1. \] This is impossible, since $ck(x_0)=k(x_1)$, and thus $k(x_0)$ and $k(x_1)$ cannot both lie in the fundamental domain $[x_0, x_1)$. This is the desired contradiction, and thus $G_\omega$ must be proximal.

It remains to be shown that for $\Delta=\{\omega_1, \omega_2\} \in \Omega^{\{ 2\}}$ the group $G_\Delta$ acts minimally. Assume that this is not the case. Then either it has an exceptional minimal set, or it admits periodic orbits. We let $K(\omega_1, \omega_2)$ be the exceptional minimal set in the first case, and the set of periodic orbits in the second case. In both cases $K(\omega_1, \omega_2)$ is a $G_\Delta$-invariant closed proper subset of $\Sbb^1$ (in the finite orbit case, this follows from the observation that all finite orbits must have the same cardinality, and that $G_\Delta$ is transitive on $\Omega\setminus \Delta$ and hence infinite). If we denote by $\Omega^{(3)}$ the set of ordered triples of distinct elements of $\Omega$ and by $\mathcal{C}(\Sbb^1)$ the set of closed subsets of $\Sbb^1$, we have a $G$-equivariant map \[ \Phi\colon \Omega^{(3)}\to \mathcal{C}(\Sbb^1)^3, \, \, (\omega_1, \omega_2, \omega_3)\mapsto \left( K(\omega_1,\omega_2), K(\omega_2, \omega_3), K(\omega_1, \omega_3) \right). \] By $3$-transitivity, the action of $G$ on the image of $\Phi$ is transitive. We claim that whenever $(K_1, K_2, K_3)$ is in the image of $\Phi$, each  $K_i$ must be entirely contained in a connected component of  the complement of $K_j$ for every choice of $i\neq j$. By transitivity of the action of $G$ on $\operatorname{Im}(\Phi)$, it is enough to find $(K_1, K_2, K_3)\in \operatorname{Im}(\Phi)$ with this property. Given $\omega, \omega' \in \Omega$, by minimality and proximality of $G_\omega$ we can choose $g\in G_\omega$ such that $g(K(\omega, \omega'))$ is entirely contained in a connected component of the complement of $K(\omega, \omega')$. Setting $\omega''=g(\omega')$, we see that the triple $(K_1, K_2, K_3)=(K(\omega, \omega'), K(\omega, \omega''), K(\omega', \omega''))$ satisfies the claim for $i=1, j=2$ and for $i=2, j=1$. But any $h\in G$ which permutes cyclically the points $\omega, \omega', \omega''$ must permute cyclically the sets $K_1, K_2, K_3$, which implies the same property for all $i, j$. This proves the claim. 

%can be automatically strengthened as follows:

We now observe that this implies that for every $(K_1, K_2, K_3)\in \operatorname{Im}(\Phi)$, each $K_i$ is entirely contained in a connected component of the complement of the union of the other two. To see this, note that if $K_2, K_3$ lie in the same connected component of the complement of $K_1$, this property holds for $i=1$, and if not it holds for $i=2$. Using again that $ \operatorname{Im}(\Phi)$ is stable under cyclic permutations, the property must hold for every $i$.

We deduce that the cyclic order on the circle induces a well-defined cyclic order on triples $(K_1, K_2, K_3)\in \operatorname{Im}(\Phi)$. Now chose $\omega_1, \omega_2, \omega_3\in \Omega$ and $g\in G$ such that $g$ fixes $\omega_1$ and $g$ exchanges $\omega_2$ and $\omega_3$. This element $g$ sends $(K_1, K_2, K_3):=\left(K(\omega_1, \omega_2), K(\omega_2, \omega_3), K(\omega_1, \omega_3)\right)$ to $(K_3, K_2, K_1)$, and thus cannot preserve the cyclic orders the sets $K_i$, reaching a contradiction. Thus $G_\Delta$ must act minimally on $\Sbb^1$, and the proof is complete.    \qedhere
\end{proof}

Note that in Proposition \ref{prop-S1} we did not require that the action of $G$ in $\Sbb^1$ is not topologically free. However in the sequel we restrict to this situation, and we establish preliminary results for the proofs of Theorems \ref{thm-intro-circle} and \ref{thm-bis-intro-circle}. Our first goal is to exclude the existence of a 3-transitive faithful action of $G$ on $\Omega$ with the property that for  $\Delta \in \Omega^{\{2\}}$, the group $G_\Delta$ acts minimally on $\Sbb^1$. This is the purpose of the following statements until Proposition \ref{p-Gdelta-not-minimal}.

%From now on, we assume that the action of $G$ in $\Sbb^1$ is not topologically free. Our first goal is to exclude the existence of a 3-transitive faithful action of $G$ on $\Omega$ with the property that for  $\Delta \in \Omega^{\{2\}}$, the group $G_\Delta$ acts minimally on $\Sbb^1$. This is the purpose of next few statements until Proposition \ref{p-Gdelta-not-minimal} below. 

\begin{lem} \label{lem-stab-NTF}
Let $G \leq \homeo(\Sbb^1)$ whose action on $\Sbb^1$ is minimal and not topologically free. Suppose that $G$ acts faithfully on a set $\Omega$ such that for any  $\Delta \in \Omega^{\{ 2\}}$, the action of $G_\Delta$ on $\Sbb^1$ is minimal. Then there exists $\omega \in \Omega$ such that the action of $G_{\omega}$ on $\Sbb^1$ is not topologically free.
\end{lem}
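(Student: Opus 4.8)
The plan is to exhibit, for a suitable $\omega \in \Omega$, a single nontrivial element of $G_\omega$ that fixes some open arc of $\Sbb^1$ pointwise; this is exactly what non-topological-freeness of $G_\omega$ means. Since the $G$-action on $\Sbb^1$ is not topologically free, I fix once and for all a nontrivial $f \in G$ together with a connected component $I = (p,q)$ of the interior of $\fix(f)$, so that $f$ is the identity on the open arc $I$ while no strictly larger arc is fixed pointwise (in particular $p \notin I$). If $f$ already fixes some point $\omega \in \Omega$, then $f \in G_\omega$ is a witness that $G_\omega$ is not topologically free and we are done. So I may assume $f$ acts without fixed points on $\Omega$; then $\omega_2 := f(\omega_1) \neq \omega_1$ for any chosen $\omega_1 \in \Omega$, and I set $\Delta = \{\omega_1,\omega_2\} \in \Omega^{\{2\}}$.

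The heart of the argument is a commutator construction. For any $\gamma \in G_\Delta$ I consider $h := f^{-1} f^\gamma = [f^{-1},\gamma]$. Since $\gamma$ fixes both $\omega_1$ and $\omega_2$ and $f(\omega_1)=\omega_2$, a direct computation gives $h(\omega_1)=\omega_1$, so that $h \in G_{\omega_1}$ for \emph{every} choice of $\gamma$. Moreover $f$ is the identity on $I$ and $f^\gamma$ is the identity on $\gamma(I)$, so $h$ fixes $I \cap \gamma(I)$ pointwise. Thus $h$ will witness the non-topological-freeness of $G_{\omega_1}$ as soon as $\gamma$ is chosen so that simultaneously $I \cap \gamma(I) \neq \varnothing$ (guaranteeing that $h$ fixes a nonempty open arc) and $h \neq 1$.

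This is where I use the hypothesis that $G_\Delta$ acts minimally on $\Sbb^1$. Minimality makes the $G_\Delta$-orbit of the endpoint $p$ dense, so I may pick $\gamma \in G_\Delta$ with $\gamma(p) \in I$. Then $\gamma(I)$ is an open arc one of whose endpoints, $\gamma(p)$, lies in the interior of $I$; consequently $\gamma(I) \cap I \neq \varnothing$, while $\gamma(I) \neq I$ since $\gamma(p)$ is not an endpoint of $I$ (as $p \notin I$). The first property supplies the nonempty fixed arc for $h$. For the second—that $h \neq 1$, i.e.\ that $\gamma$ does not commute with $f$—I observe that any element commuting with $f$ preserves $\fix(f)$ and hence permutes the connected components of its interior; since $I$ is such a component and $\gamma(I)$ meets $I$ without equaling it, $\gamma$ cannot permute components and so cannot commute with $f$. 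Therefore $h = [f^{-1},\gamma] \neq 1$, and $h$ is the desired nontrivial element of $G_{\omega_1}$ fixing the open arc $I \cap \gamma(I)$ pointwise.

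I expect the only genuinely delicate point to be this simultaneous requirement on $\gamma$: one must keep $\gamma(I)$ overlapping $I$ (so that $h$ retains a fixed open arc, rather than being spread over all of $\Sbb^1$) while still forcing non-commutation with $f$ (so that $h$ is nontrivial). Choosing $I$ to be a \emph{maximal} fixed arc—a full connected component of the interior of $\fix(f)$—is exactly what reconciles the two demands, since it converts non-commutation with $f$ into the purely combinatorial statement that $\gamma$ fails to permute the components of the interior of $\fix(f)$.
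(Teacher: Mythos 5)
Your proof is correct, and it shares the paper's skeleton: both produce the witness as a commutator of a nontrivial element acting as the identity on an arc $I$ with an element of $G_\Delta$, where $\Delta=\{\omega_1,\omega_2\}$ and $\omega_2$ is the image of $\omega_1$ under that locally trivial element; such a commutator automatically fixes a point of $\Delta$ and acts trivially on the overlap of $I$ with its $G_\Delta$-translate. The genuine difference is how nontriviality of the commutator is forced. The paper argues via generation: by minimality of $G_\Delta$ and connectedness of $\Sbb^1$, the set $X=\{g\in G_\Delta : gI\cap I\neq\varnothing\}$ generates $G_\Delta$ (citing Bridson--Haefliger), so the locally trivial element cannot commute with every element of $X$ --- otherwise it would centralize all of $G_\Delta$, and then $G_\Delta$ would preserve the proper closed nonempty set $\fix(\gamma)$, contradicting minimality --- and any non-commuting $g\in X$ serves. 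You argue via maximality: taking $I$ to be a full connected component of the interior of $\fix(f)$ and using density of the $G_\Delta$-orbit of its endpoint $p$, you find $\gamma\in G_\Delta$ with $\gamma(p)\in I$, so that $\gamma(I)$ meets $I$ without equaling it; commutation with $f$ is then impossible, since a commuting homeomorphism must permute the connected components of the interior of $\fix(f)$. Your route is more self-contained (no appeal to a generation theorem) and exhibits the non-commuting element explicitly; the paper's route needs no special choice of $I$ but leans on the cited result. Your preliminary case split (whether $f$ fixes a point of $\Omega$) replaces the paper's use of faithfulness to pick a point moved by the locally trivial element, and is equally harmless.
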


\begin{proof}
Choose a non-empty open interval $I$ and a non-trivial $\gamma \in G$ that is supported outside $I$. Let $\omega_1 \in \Omega$ be such that $\omega_2 := \gamma(\omega_1)\neq \omega_1$, and  $\Delta=\{\omega_1, \omega_2\}$. Let $X$ be the set of $g \in G_\Delta$ such that $g I \cap I \neq \varnothing$. By minimality of the action of $G_\Delta$ on $\Sbb^1$, we have $G_\Delta I = \Sbb^1$. So by connectedness, the subgroup $G_\Delta$ is generated by $X$ \cite[Th. 8.10]{Bri-Hae}, and hence $\gamma$ cannot commute with all the elements of $X$. If $g \in G_\Delta$ is such that $J := g I \cap I \neq \varnothing$ and $g' := [g,\gamma]$ is non-trivial, then the element $g'$ acts trivially on the open interval $J$, and $g'$ fixes the point $\omega_2$ in $\Omega$. 
\end{proof}

\begin{lem}\label{l-Gdelta-proximal}
Let $G\le \homeo(\Sbb^1)$ whose action on $\Sbb^1$ is minimal, proximal, and not topologically free. Suppose that $G$ acts faithfully and 3-transitively on a set $\Omega$ such that for  $\Delta\in \Omega^{\{ 2\}}$, the action of $G_\Delta$ on $\Sbb^1$ is minimal. Then the action of $G_\Delta$ on $\Sbb^1$ is proximal.
\end{lem}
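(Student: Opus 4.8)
The plan is to place ourselves in the third alternative of Proposition~\ref{prop-S1} and then to show that $G_\Delta^+$ has trivial centralizer in $\homeo^+(\Sbb^1)$, which by the \enquote{in particular} clause of Theorem~\ref{t-Margulis} is exactly proximality. First I would record that the hypotheses force case \eqref{i-minimal-proximal} of Proposition~\ref{prop-S1}: case \eqref{i-conjugate} cannot occur, since there $G_\omega$ fixes a point of $\Sbb^1$ and hence so does $G_\Delta$, contradicting the minimality of $G_\Delta$; and case \eqref{i-minimal-not-proximal} explicitly asserts that $G_\Delta$ does not act minimally, again contrary to our assumption. Thus for every $\omega$ the group $G_\omega$ is minimal and proximal, so $\aut_{G_\omega^+}(\Sbb^1)$ is trivial. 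Since $G_\Delta$ is minimal so is its index-$2$ subgroup $G_\Delta^+$, and it suffices to prove $\aut_{G_\Delta^+}(\Sbb^1)=1$; I argue by contradiction and fix a nontrivial $c\in\aut_{G_\Delta^+}(\Sbb^1)$.

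Next I would make the centralizer canonical and equivariant. Applying Theorem~\ref{t-Margulis} to the minimal group $G_\Delta^+$ leaves two possibilities for $\aut_{G_\Delta^+}(\Sbb^1)$: either it is finite cyclic, or $G_\Delta^+$ is abelian and conjugate to a group of rotations. The latter I would exclude using non-topological-freeness: a group conjugate to rotations acts \emph{freely} on $\Sbb^1$, whereas the assumption that the $G$-action is not topologically free, together with Lemma~\ref{lem-stab-NTF} and the minimality of every $G_\Delta$ on $\Sbb^1$, should produce a nontrivial element of $G_\Delta$ fixing an arc of $\Sbb^1$ pointwise; being supported in an interval this element is orientation-preserving, hence lies in $G_\Delta^+$ and has fixed points, contradicting freeness. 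Granting this, $\aut_{G_\Delta^+}(\Sbb^1)=\langle c_\Delta\rangle$ is finite cyclic with $c_\Delta$ conjugate to a rotation of angle $2\pi/n$; as all pair-stabilizers are conjugate in $G$ the integer $n$ is independent of $\Delta$, and since conjugation carries a centralizer to a centralizer, the assignment $\Delta\mapsto\langle c_\Delta\rangle$ is $G$-equivariant.

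The heart of the proof would then be a cyclic-order argument parallel to the final paragraphs of the proof of Proposition~\ref{prop-S1}, but carried out at the level of pairs. Fixing distinct $\omega_1,\omega_2,\omega_3\in\Omega$ and writing $c_{ij}$ for a generator of $\langle c_{\{\omega_i,\omega_j\}}\rangle$, I would choose a $c_{12}$-orbit $x_0,\dots,x_{n-1}$ and, using minimality and proximality of $G$, push the corresponding orbits of $c_{13}$ and $c_{23}$ into a single fundamental domain $[x_0,x_1)$ of $c_{12}$, recording their cyclic order. By $3$-transitivity there is an element of $G$ realizing the $3$-cycle $(\omega_1\,\omega_2\,\omega_3)$, which by equivariance permutes $\langle c_{12}\rangle,\langle c_{23}\rangle,\langle c_{13}\rangle$ cyclically, and an element realizing a transposition of two of the $\omega_i$, which reverses the order of the corresponding two cyclic groups (inverting the relevant $c_{ij}$ if it is orientation-reversing). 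Transporting the fundamental-domain pattern by these elements and comparing the relative positions of the points exactly as in Proposition~\ref{prop-S1} would yield an inequality that no homeomorphism of $\Sbb^1$ can satisfy, the desired contradiction.

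The step I expect to be the main obstacle is this last cyclic-order bookkeeping: tracking the fundamental domains of the three groups $\langle c_{ij}\rangle$ and the action of the orientation-reversing elements of $G$ simultaneously. A secondary difficulty is the clean exclusion of the abelian case, namely proving that some pair-stabilizer $G_\Delta$ is itself not topologically free, for which the assumption that every $G_\Delta$ acts minimally on $\Sbb^1$ is essential.
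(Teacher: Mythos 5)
Your first paragraph (forcing case \eqref{i-minimal-proximal} of Proposition \ref{prop-S1}, so that every $G_\omega$ is minimal and proximal) is correct and agrees with the paper. The genuine gap is in your third paragraph, which is the heart of your plan. The step \enquote{using minimality and proximality of $G$, push the corresponding orbits of $c_{13}$ and $c_{23}$ into a single fundamental domain of $c_{12}$} is not coherent: by the very equivariance you set up, conjugating by $g\in G$ replaces $c_\Delta$ by $c_{g\Delta}$, so \enquote{pushing} changes the pairs, and after the push the three cyclic groups are no longer the ones attached to the three pairs formed by $\omega_1,\omega_2,\omega_3$. This is exactly where your situation differs from the proof of Proposition \ref{prop-S1}: there the three homeomorphisms are $c$, $c^g$, $c^h$, attached to the three \emph{points} $\omega$, $g(\omega)$, $h(\omega)$, so the configuration is built by choosing $g,h$; here the triple $(c_{12},c_{13},c_{23})$ is rigid once the $\omega_i$ are fixed. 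At best you may take $\omega_3=g(\omega_2)$ with $g\in G_{\omega_1}$, which controls $c_{13}=c_{12}^g$, but $c_{23}$ remains completely uncontrolled. Moreover, the contradiction in Proposition \ref{prop-S1} leans crucially on the fact that the swapping element \emph{centralizes} $c$ (hence permutes the fundamental domains of $c$ compatibly); in your pair-level version the element realizing $(\omega_2\,\omega_3)$ only \emph{normalizes} $\langle c_{23}\rangle$, about which nothing is known, and a direct transplant of the bookkeeping would require fixing two points and swapping two others, i.e.\ $4$-transitivity. Worse, the data your plan extracts (three conjugate finite cyclic groups permuted by an $S_3$ of conjugators, with partial position control) carries no contradiction at all: three elliptic involutions of $\mathbb{P}^1(\R)$ whose fixed points form an equilateral triangle in the hyperbolic plane are permuted in exactly this way by isometries. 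Any contradiction must therefore use the non-topologically-free hypothesis inside the bookkeeping, and your plan never does.

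Your \enquote{secondary difficulty} is also a real gap as written, since Lemma \ref{lem-stab-NTF} only produces a non-trivial element of a \emph{point} stabilizer $G_\omega$ fixing an arc, not of a pair stabilizer. But resolving it is the paper's key move, and once done it finishes the proof immediately, making your entire cyclic-order program unnecessary. Namely: take a non-trivial $g\in G_I$ for some open interval $I$, take $\omega$ with $g(\omega)\neq\omega$, and set $\Delta=\{\omega,g(\omega)\}$. If $G_\Delta$ is not proximal, Theorem \ref{t-Margulis} (in either of its two cases) yields a non-trivial $f\in\homeo^+(\Sbb^1)$ centralizing $G_\Delta^{+}$ and conjugate to a rotation; since $f$ has no fixed point one can choose a non-empty open interval $J$ disjoint from $I$ with $f(J)\cap J=\varnothing$. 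Because $G_\omega$ is minimal, proximal and not topologically free, there is a non-trivial $\gamma\in G_\omega$ supported in $J$; as $\gamma$ and $g$ have disjoint supports they commute, so $\gamma(g(\omega))=g(\gamma(\omega))=g(\omega)$, hence $\gamma\in G_\Delta^{+}$. But $f\gamma f^{-1}$ is supported in $f(J)$, disjoint from $J$, so $f$ fails to centralize $\gamma$, a contradiction. This disjoint-support trick (elements supported in small intervals land in $G_\Delta$ automatically) is what you should develop in place of the equivariant assignment $\Delta\mapsto\langle c_\Delta\rangle$ and the cyclic-order analysis.
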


\begin{proof}

Consider the three cases in Proposition \ref{prop-S1}. Since $G_\Delta\le G_\omega$ for $\omega \in \Delta$, case \eqref{i-conjugate} cannot happen and case \eqref{i-minimal-not-proximal} cannot happen either. Thus, we deduce that $G_\omega$ acts minimally and proximally on $\Sbb^1$ for every $\omega\in \Omega$.

Let $g$ be a non-trivial element of $G_I$ for some open interval $I \subsetneq \Sbb^1$. Let $\omega\in \Omega$ be a point such that $g(\omega)\neq \omega$, and let $\Delta=\{\omega, g(\omega)\}$. Assume by contradiction that the group $G_\Delta$ is not proximal.  By Theorem \ref{t-Margulis}, there exists a non-trivial $f\in \aut_{\Sbb^1}(G^{+}_{\Delta})$ which is conjugate to a rotation. Choose a non-empty open interval $J$ disjoint from $I$ and small enough so that $f(J) \cap J=\varnothing$. Since  $G_\omega$ is proximal and $G_\omega$ is not topologically free by Lemma \ref{lem-stab-NTF}, there exists a non-trivial element $\gamma\in G_\omega$ supported in $J$. The elements $\gamma$ and $g$ have disjoint support in the circle, and thus commute. We deduce that $\gamma g(\omega)=g(\omega)$, and thus $\gamma\in G_\Delta$. But since $f(J)\cap J=\varnothing$, the element $\gamma$ cannot commute with $f$, reaching a contradiction. \qedhere

\end{proof}

Lemma \ref{lem-commutations-S1} and Proposition \ref{prop--S1-mixed-id} below should be compared respectively with Lemma \ref{lem-mix-id} and Proposition \ref{prop--tree-mixed-id}.

\begin{lem} \label{lem-commutations-S1}
Let $G \leq \homeo(\Sbb^1)$, let $I_1,I_2,I_3$ three disjoint intervals, and let $g_1,g_2,g_3 \in G$ such that $g_i$ is supported inside $I_i$. For $g \in G$, at least one of the following hold:
\begin{enumerate}
	\item \label{item-one-commut-S1} $[g_1^g,g_1]=1 \vee [g_1^g,g_2]=1 \vee [g_1^g,g_3]=1$;
	\item \label{item-two-commut-S1} $[g_2^g,g_1] = [g_3^g,g_1] =1 \vee [g_2^g,g_2] = [g_3^g,g_2] =1 \vee [g_2^g,g_3] = [g_3^g,g_3] =1$.
\end{enumerate}
In particular $G$ satisfies \begin{equation} \label{eq-identity} \forall g \, \, [g_1^g, g_1] = 1 \vee [g_1^g, g_2] = 1 \vee \ldots \vee [g_3^g, g_3] = 1. \tag{$\star$} \end{equation}
\end{lem}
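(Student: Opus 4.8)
The plan is to translate everything into statements about the three arcs $J_i := g(I_i)$ and their intersections with the $I_j$, and then to run a short topological dichotomy on the single arc $J_1$, playing the role that Lemma \ref{lem-tree-moves} plays in the tree case. The bookkeeping is as follows. Since $g_i$ fixes $\Sbb^1 \setminus I_i$ pointwise, the conjugate $g_i^g = g g_i g^{-1}$ fixes $g(\Sbb^1 \setminus I_i) = \Sbb^1 \setminus J_i$ pointwise, i.e. $g_i^g$ is supported in $J_i$; and two homeomorphisms of $\Sbb^1$ with disjoint supports commute. Hence $J_i \cap I_j = \varnothing$ forces $[g_i^g, g_j] = 1$. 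With this, conclusion \eqref{item-one-commut-S1} is implied by the statement \enquote{$J_1$ is disjoint from at least one of $I_1, I_2, I_3$}, while the $j$-th disjunct of \eqref{item-two-commut-S1} is implied by \enquote{$I_j$ is disjoint from both $J_2$ and $J_3$}. So it suffices to prove the purely topological dichotomy: either $J_1$ misses some $I_j$, or some $I_i$ is contained in $J_1$.

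First I would dispose of the easy alternative: if $J_1 \cap I_j = \varnothing$ for some $j$, we are immediately in case \eqref{item-one-commut-S1} and there is nothing more to do.

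The heart of the matter is the remaining case, where $J_1$ meets all three of $I_1, I_2, I_3$; here I would show that $J_1$ must then contain one of the $I_i$ entirely. Since $I_1$ is a proper sub-arc of the circle, so is $J_1 = g(I_1)$, and therefore its topological boundary $\partial J_1$ consists of exactly two points. If no $I_i$ were contained in $J_1$, then every $I_i$ would meet both $J_1$ and its complement; being connected, each $I_i$ would then contain a point of $\partial J_1$. As $I_1, I_2, I_3$ are pairwise disjoint, this would require three distinct boundary points, contradicting $|\partial J_1| = 2$. Hence some $I_i \subseteq J_1$. Since $J_1, J_2, J_3$ are pairwise disjoint (being homeomorphic images of the disjoint $I_i$), this $I_i$ is disjoint from both $J_2$ and $J_3$, which is exactly the $j=i$ disjunct of \eqref{item-two-commut-S1}.

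The only genuine obstacle is this endpoint-counting step, together with the care needed around open versus closed arcs (the two complementary arcs determined by $J_1$, and the fact that $I_i$ meets the common boundary rather than just one side); everything else is routine support bookkeeping. Finally, the formula $(\star)$ is immediate, since in either alternative at least one of the nine commutators $[g_i^g, g_j]$ is trivial, which is precisely the disjunction $(\star)$.
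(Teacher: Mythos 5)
Your proof is correct and follows essentially the same route as the paper's: first dispose of the case where $g(I_1)$ misses some $I_j$, and otherwise argue that $g(I_1)$ must contain some $I_i$, which is then disjoint from $g(I_2)$ and $g(I_3)$, yielding case \eqref{item-two-commut-S1}. The only difference is that you spell out the boundary-point-counting justification of the containment step, which the paper leaves implicit (your \enquote{exactly two} boundary points should be \enquote{at most two} to cover degenerate arcs, but the pigeonhole contradiction is unaffected).
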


\begin{proof}
If there is $i$ such that $I_i \cap g(I_1) = \varnothing$, then (\ref{item-one-commut-S1}) holds. If there is no such $i$ then $g(I_1)$ intersects the three intervals $I_1,I_2,I_3$, and it follows that $g(I_1)$ must contain one of them. If $i$ is such that $I_i \subset g(I_1)$, then $I_i$ is disjoint from $g(I_2)$ and $g(I_3)$, so that $[g_2^g,g_i] = [g_3^g,g_i] =1$. Hence (\ref{item-two-commut-S1}) holds.
\end{proof}

\begin{prop} \label{prop--S1-mixed-id}
Let $G \leq \homeo(\Sbb^1)$ whose action on $\Sbb^1$ is minimal, proximal and not topologically free. Then $G$ satisfies a non-trivial mixed-identity.
\end{prop}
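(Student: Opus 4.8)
The plan is to mimic exactly the argument of Proposition~\ref{prop--tree-mixed-id}, replacing the half-tree combinatorics by the interval combinatorics of Lemma~\ref{lem-commutations-S1}. First I would use the hypothesis that the action of $G$ on $\Sbb^1$ is not topologically free to produce three non-trivial elements $g_1,g_2,g_3 \in G$ that are supported in three pairwise disjoint open intervals $I_1,I_2,I_3 \subsetneq \Sbb^1$. Indeed, non-topological-freeness gives a non-trivial element supported outside some interval, i.e.\ supported inside a closed interval $I \subsetneq \Sbb^1$; and since the action is minimal and proximal, the remark made just after the statement of Theorem~\ref{t-Margulis} (or rather the elementary observation recorded at the start of \S\ref{sec-circle}) shows that $G_J$ is non-trivial for \emph{every} non-empty open interval $J$. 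Choosing three disjoint non-empty open intervals $I_1,I_2,I_3$ and picking a non-trivial $g_i \in G_{I_i}$ for each $i$ furnishes the required elements.

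The heart of the proof is then purely formal. Following the tree case verbatim, I would introduce the element $t$ as a free generator and set
\[
c_{i,j} = [g_i^{\,t}, g_j] \in G \ast \langle t \rangle,
\]
for $i,j \in \{1,2,3\}$, and let $w$ be the iterated commutator of the nine elements $c_{1,1},\ldots,c_{3,3}$, say
\[
w = [c_{1,1},[c_{1,2},[\ldots[c_{3,2},c_{3,3}]]\ldots]].
\]
Because $g_1,g_2,g_3$ are non-trivial elements of $G$ and $t$ is a free generator, the normal form theorem for free products shows that $w$ is a non-trivial element of $G \ast \langle t \rangle$: each $c_{i,j}$ is a non-trivial reduced word, and the iterated commutator of non-trivial elements in a free product remains non-trivial. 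Thus $w=1$ is a candidate non-trivial mixed-identity.

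It remains to check that $G$ actually satisfies $w=1$, i.e.\ that every element $g \in G$ substituted for $t$ kills $w$. This is where Lemma~\ref{lem-commutations-S1} enters through the universal formula~\eqref{eq-identity}: for any $g \in G$, at least one of the nine commutators $[g_i^{\,g}, g_j]$ is trivial, so at least one of the specialized $c_{i,j}$ evaluates to the identity in $G$. Since $w$ is built as an iterated commutator containing every $c_{i,j}$ as an innermost or nested factor, the vanishing of any one $c_{i,j}$ forces the iterated commutator $w$ to collapse to the identity. Hence every $G$-homomorphism $G \ast \langle t \rangle \to G$ that is the identity on $G$ sends $w$ to $1$, which is precisely the statement that $G$ satisfies the non-trivial mixed-identity $w=1$. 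The only step requiring genuine (but mild) care is the very first one---producing the three elements with disjoint interval supports from minimality, proximality, and non-topological-freeness---while the algebraic core is an exact transcription of Proposition~\ref{prop--tree-mixed-id}.
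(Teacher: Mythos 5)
Your proposal is correct and follows exactly the paper's own argument: the paper likewise produces $g_1,g_2,g_3$ supported in disjoint intervals (using the observation that minimality, proximality and non-topological freeness make $G_J$ non-trivial for every non-empty open interval $J$), forms the same iterated commutator $w$ of the nine elements $c_{i,j}=[g_i^t,g_j]$, invokes the normal form theorem for its non-triviality, and concludes via the formula~(\ref{eq-identity}) from Lemma~\ref{lem-commutations-S1}. There is nothing to add.
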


\begin{proof}
The assumptions imply that we can choose non-trivial elements $g_1,g_2,g_3$ of $G$ that are supported in disjoint intervals, and the rest of the proof is exactly the same as Proposition \ref{prop--tree-mixed-id}, except that Lemma \ref{lem-mix-id} is replaced by Lemma \ref{lem-commutations-S1}.
\end{proof}

We have reached a first step in the proof of the theorems.

\begin{prop}\label{p-Gdelta-not-minimal}
Suppose that $G\le \homeo(\Sbb^1)$ is minimal, proximal, and not topologically free. Then there does not exist a set $\Omega$ on which $G$ acts faithfully and $3$-transitively and such that for $\Delta \in \Omega^{\{ 2\}}$, the action of $G_\Delta$ on $\Sbb^1$ is minimal.
\end{prop}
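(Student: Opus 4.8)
The plan is to argue by contradiction, following verbatim the strategy of the tree analogue (Lemma \ref{lem-trees-GDelta-gen}), with the commutation formula of Lemma \ref{lem-commutations-S1} playing the role of Lemma \ref{lem-mix-id}. Suppose such a faithful $3$-transitive action on $\Omega$ exists, with $G_\Delta$ acting minimally on $\Sbb^1$ for every $\Delta \in \Omega^{\{ 2\}}$. The crucial enabling step is Lemma \ref{l-Gdelta-proximal}, which upgrades this minimality to the statement that the action of $G_\Delta$ on $\Sbb^1$ is in fact minimal \emph{and} proximal. This is precisely the circle counterpart of the situation in Lemma \ref{lem-trees-GDelta-gen}, where $G_\Delta$ was of general type, and the proximality of $G_\Delta$ is exactly what will allow me to conjugate supports into disjoint position.

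First I would produce a non-trivial element supported in a proper interval and having an orbit of size at least $3$ in $\Omega$. Fix a proper open interval $I \subsetneq \Sbb^1$; since the action is minimal, proximal and not topologically free, $G_I$ is non-trivial, and in fact non-abelian by the same argument as in the parenthetical remark of Lemma \ref{lem-stab-orbits-size3}: given $f \in G_I \setminus \{1\}$, pick an open interval $J \subset I$ with $\overline{f(J)} \cap \overline{J} = \varnothing$, choose $h \in G_J \setminus \{1\}$ (non-trivial by non-topological-freeness together with minimality and proximality), and note that $fhf^{-1}$ is supported in $f(J)$, so $f$ and $h$ cannot commute. As a group in which every element has order at most $2$ is abelian, $G_I$ contains an element $g_1$ with $g_1^2 \neq 1$; by faithfulness the permutation $g_1^2$ of $\Omega$ is non-trivial, so there is $\omega_1$ with $\omega_1, \omega_2 := g_1(\omega_1), \omega_3 := g_1(\omega_2)$ pairwise distinct. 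Set $\Delta = \{\omega_1, \omega_2\}$.

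Next I would spread $g_1$ into three disjoint intervals using the proximality of $G_\Delta$. Picking open intervals $J_2, J_3$ that are disjoint from each other and from $\overline{I}$, proximality of $G_\Delta$ yields $u_2, u_3 \in G_\Delta$ with $u_2(\overline{I}) \subset J_2$ and $u_3(\overline{I}) \subset J_3$; then $g_2 := u_2 g_1 u_2^{-1}$ and $g_3 := u_3 g_1 u_3^{-1}$ are supported in $J_2$ and $J_3$ respectively, so that $g_1, g_2, g_3$ have pairwise disjoint supports. Since $u_2, u_3$ fix both $\omega_1$ and $\omega_2$, one gets $g_i(\omega_1) = \omega_2$ for $i = 1,2,3$; and since $g_2, g_3$ commute with $g_1$ (disjoint supports on $\Sbb^1$), one also gets $g_i(\omega_2) = g_i g_1(\omega_1) = g_1 g_i(\omega_1) = g_1(\omega_2) = \omega_3$.

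Finally I would close the argument exactly as in the tree case. Let $g \in G$ fix both $\omega_1$ and $\omega_2$. Applying Lemma \ref{lem-commutations-S1} to $g_1,g_2,g_3$ and $g$, in either of its two alternatives there are indices $i,j$ with $[g_i^g, g_j] = 1$: alternative (\ref{item-one-commut-S1}) provides such a pair with $i=1$, and alternative (\ref{item-two-commut-S1}) provides one with $i \in \{2,3\}$. Since $g$ fixes $\omega_1$ and $\omega_2$, every conjugate satisfies $g_i^g(\omega_1) = g g_i g^{-1}(\omega_1) = g g_i(\omega_1) = \omega_2$; evaluating $g_i^g g_j = g_j g_i^g$ at $\omega_1$ and using $g_j(\omega_2) = \omega_3$ gives $g_i^g(\omega_2) = g_j(\omega_2) = \omega_3$, while $g_i^g(\omega_2) = g g_i(\omega_2) = g(\omega_3)$, whence $g(\omega_3) = \omega_3$. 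Thus the pointwise stabilizer of $\{\omega_1,\omega_2\}$ fixes $\omega_3$, contradicting $3$-transitivity on the infinite set $\Omega$. I expect the only real subtleties to be the invocation of Lemma \ref{l-Gdelta-proximal} to obtain proximality of $G_\Delta$ (which is what makes the conjugation step available while keeping $g_1$ supported in a proper interval) and the bookkeeping needed to verify that \emph{both} branches of Lemma \ref{lem-commutations-S1} yield an admissible pair $(i,j)$; the latter is painless here because every $g_i^g$ already sends $\omega_1$ to $\omega_2$, so any pair with $[g_i^g,g_j]=1$ suffices.
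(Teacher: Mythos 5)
Your proof is correct and takes essentially the same approach as the paper's: invoke Lemma \ref{l-Gdelta-proximal} to get proximality of $G_\Delta$, produce a non-trivial element supported in a proper interval with orbit pattern $\omega_1 \mapsto \omega_2 \mapsto \omega_3$, conjugate it by elements of $G_\Delta$ into three disjointly supported copies all realizing the same pattern, and apply Lemma \ref{lem-commutations-S1} to force every element of $G_\Delta$ to fix $\omega_3$, contradicting $3$-transitivity. The only cosmetic difference is your detour through non-abelianness of $G_I$ to obtain $g_1^2 \neq 1$, where the paper simply observes that any non-trivial homeomorphism supported in a proper interval has infinite order.
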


% the action of the subgroup Assume that  $G$ acts faithfully and 3-transitively on a set $\Omega$. then for every $\Delta\in \Omega^{\{ 2\}}

\begin{proof}
Assume for a contradiction that such an action exists. By Lemma \ref{l-Gdelta-proximal}, the action of $G_\Delta$ on $\Sbb^1$ is proximal. Choose an interval $I$ and a non-trivial element $h\in G_I$. Since $h$ has infinite order, we can find $\omega_1\in \Omega$ such that $\omega_2:=h(\omega_1)\neq \omega_1$ and $\omega_3:=h(\omega_2)\neq \omega_1$.  Let $\Delta'=\{\omega_1, \omega_2\}$. Using that $G_{\Delta'}$ is minimal and proximal, we can choose $\gamma, \gamma' \in G_{\Delta'}$ such that the intervals $I_1=I, I_2=\gamma(I), I_3=\gamma'(I)$ are disjoint.  The elements $h_1=h, h_2=h^\gamma, h_3=h^{\gamma'}$   verify $h_i(\omega_1)=\omega_2$ for every $i=1, 2,3$. Moreover these elements have disjoint support for their action on the circle, and hence commute. It follows that $h_i(\omega_2)=h_1h(\omega_1)=h(\omega_2)=\omega_3$ for every $i=1,2,3$. Now choose $g\in G_\Delta$ such that $g(\omega_3)\neq \omega_3$. By Lemma \ref{lem-commutations-S1}, there exist $i, j$ such that $h_i^g$ and $h_j$ commute. Since $g\in G_\Delta$, we  must have $h_i^g(\omega_1)=\omega_2$ and $h_i^g(\omega_2)=g(\omega_3)$. Again since $h_i^g$ and $h_j$ commute and both send $\omega_1$ to $\omega_2$, we must have $h_i^g(\omega_2)=\omega_3$, and hence $g(\omega_3) = \omega_3$. This is a contradiction. \qedhere
\end{proof}

%\subsection{The proof of Theorem \ref{thm-bis-intro-circle}}

%This is already enough to prove Theorem \ref{thm-bis-intro-circle}. 

The results that have been established so far allow to prove Theorem \ref{thm-bis-intro-circle}:

\begin{proof}[Proof of Theorem \ref{thm-bis-intro-circle}]
Let $G$ be as in the statement, and assume first that $G$ preserves the orientation on $\Sbb^1$. Towards a  contradiction, assume that $G$ acts faithfully and 3-transitively on a set $\Omega$. According to Proposition \ref{prop-S1}, for every $\Delta\in \Omega^{\{ 2\}}$ the group $G_\Delta$ must act minimally on the circle. But Proposition \ref{p-Gdelta-not-minimal} exactly says that such a situation cannot happen. Therefore we have a contradiction, and the transitivity degree of $G$ is at most $2$.

Assume now that $G$ does not preserve the orientation on $\Sbb^1$. Suppose that $G$ acts faithfully and 4-transitively on a set $\Omega$. Then for $\Delta\in \Omega^{\{2\}}$, the action of the group $G_\Delta$ on $\Omega\setminus \Delta$ is 2-transitive, and hence primitive. Therefore its non-trivial normal subgroup $G_\Delta^+$ acts transitively on $\Omega\setminus \Delta$. Since this holds for arbitrary $\Delta$ and since $\Omega$ is infinite, this implies that the action of $G^+$ on $\Omega$ is $3$-transitive. This contradicts the previous paragraph. Therefore such an action does not exist, and the transitivity degree of $G$ is at most $3$.
\end{proof}

% In particular its normal subgroup $G_\Delta^+$ acts transitively on $\Omega\setminus \Delta$, which implies that the action of $G^+$ on $\Omega$ is 3-transitive. This contradicts the previous case.  \qedhere

%\subsection{The proof of Theorem \ref{thm-intro-circle}}

The proof of Theorem \ref{thm-intro-circle} will require a more detailed discussion depending on the nature of the group $G_\Delta$. 

\begin{lem} \label{lem-stab-trans-ome}
Let $G \leq \homeo(\Sbb^1)$ whose action on $\Sbb^1$ is minimal, proximal and not topologically free. Suppose that $G$ acts $3$-transitively on a set $\Omega$ with the property that for $\omega\in \Omega$, the group $G_\omega$ acts minimally on $\Sbb^1$; and for $\Delta \in \Omega^{\{ 2\}}$, the group $G_\Delta$ admits a proper closed invariant subset $C_\Delta \subsetneq \Sbb^1$ that is preserved by $G_{(\Delta)}$.  Then the stabilizer of $C_\Delta$ in $G$ acts transitively on $\Omega$.
\end{lem}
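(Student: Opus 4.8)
Write $S_\Delta = \{g \in G : g(C_\Delta) = C_\Delta\}$ for the stabilizer of $C_\Delta$ in $G$. Since $C_\Delta$ is preserved by $G_{(\Delta)}$, we have $G_{(\Delta)} \leq S_\Delta$, and the whole point is to show that this inclusion is \emph{strict}. Indeed, by $3$-transitivity the setwise stabilizer $G_{(\Delta)}$ has exactly two orbits on $\Omega$, namely $\Delta$ and its complement $\Omega \setminus \Delta$; moreover $G_{(\Delta)}$ is by definition the full stabilizer of the pair $\Delta$, so any $g \in S_\Delta \setminus G_{(\Delta)}$ necessarily moves $\Delta$ to a different pair and therefore sends a point of $\Delta$ into $\Omega \setminus \Delta$. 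Hence as soon as $S_\Delta \neq G_{(\Delta)}$ the group $S_\Delta$ merges the two $G_{(\Delta)}$-orbits and acts transitively on $\Omega$, whereas if $S_\Delta = G_{(\Delta)}$ the action is not transitive. So the plan is to assume $S_\Delta = G_{(\Delta)}$ and derive a contradiction.

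Under this assumption, fix an open interval $I$ contained in the complement $\Sbb^1 \setminus C_\Delta$. Every element supported in $I$ fixes $C_\Delta$ pointwise, hence stabilizes it, so $G_I \leq S_\Delta = G_{(\Delta)}$; in particular $G_I$ preserves the pair $\Delta$. I first record that $G_I$ is non-abelian: picking a non-trivial $f \in G_I$ and a small subinterval $J \subset I$ with $f(J) \cap J = \varnothing$, the subgroup $G_J$ is non-trivial (the action is minimal, proximal and not topologically free), and any non-trivial $h \in G_J \leq G_I$ has support disjoint from that of $f h f^{-1}$, so that $[f,h] \neq 1$. Consequently $[G_I,G_I] \neq 1$; and since $G_I$ preserves the two-point set $\Delta$, the homomorphism $G_I \to \Sym(\Delta)$ has abelian image, whence $[G_I,G_I]$ fixes both points of $\Delta$. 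Thus $[G_I,G_I]$ is a non-trivial subgroup, supported in $I$, contained in $G_\Delta$; fixing $\omega \in \Delta$, we obtain $1 \neq [G_I,G_I] \leq G_\omega$.

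It remains to spread this local subgroup to a contradiction, and this is the main obstacle. When $G_\omega$ acts proximally on $\Sbb^1$ the argument is the exact analogue of the tree case (Lemma \ref{lem-tree-Strans}): by proximality of $G_\omega$ every open interval $J \subsetneq \Sbb^1$ can be mapped inside $I$ by some $h \in G_\omega$, so $h[G_J,G_J]h^{-1} = [G_{h(J)},G_{h(J)}] \leq [G_I,G_I] \leq G_\omega$ and hence $[G_J,G_J] \leq G_\omega$ for every such $J$. The subgroup generated by all the $[G_J,G_J]$ is then permuted by conjugation, since $\gamma [G_J,G_J]\gamma^{-1} = [G_{\gamma(J)},G_{\gamma(J)}]$, so it is a non-trivial normal subgroup of $G$ contained in the point stabilizer $G_\omega$, contradicting the fact that in a primitive group every non-trivial normal subgroup is transitive. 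The delicate case is the one we are genuinely in here, where by Proposition \ref{prop-S1} the group $G_\omega$ is minimal but \emph{not} proximal. Then proximality is unavailable on $\Sbb^1$, and the plan is to transport the previous argument to the quotient circle furnished by Theorem \ref{t-Margulis}: the centralizer $Z := \aut_{G_\omega^+}(\Sbb^1)$ is a non-trivial finite cyclic group, every element of $G_\omega^+$ commutes with a generator $c$ of $Z$ and therefore has $Z$-invariant support and descends to $Z \backslash \Sbb^1$, on which $G_\omega^+$ does act proximally. Running the contraction on $Z \backslash \Sbb^1$ and pulling back should again produce a non-trivial normal subgroup of $G^+$ inside the point stabilizer $G_\omega^+$, contradicting primitivity of the $G^+$-action. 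The technical heart, and the step I expect to be hardest, is exactly this transfer: one must arrange the interval subgroups and their commutators to be compatible with the central rotation $c$ (so that they descend through $Z \backslash \Sbb^1$) while keeping them supported away from $C_\Delta$, so that the resulting subgroup still fixes $\omega$.
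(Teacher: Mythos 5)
Your reduction to showing $S_\Delta \neq G_{(\Delta)}$, your proof that $G_I$ is non-abelian, and your treatment of the case where $G_\omega$ acts proximally are all correct and coincide with the paper's argument. The genuine gap is your \enquote{delicate case}: when $G_\omega$ is minimal but not proximal, you only sketch a transfer of the contraction argument to the quotient circle $Z\backslash \Sbb^1$ and concede you cannot complete it. This is precisely where the paper does something different, and much simpler: it shows that under the contradiction hypothesis $S_\Delta = G_{(\Delta)}$ the non-proximal case \emph{cannot occur}. Indeed, you have already produced, for every open interval $I$ disjoint from $C_\Delta$, a non-trivial subgroup $[G_I,G_I]\leq G_\omega$ supported in $I$; since commutators preserve orientation, in fact $[G_I,G_I]\leq G_\omega^{+}$. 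If $G_\omega^{+}$ were not proximal, Theorem \ref{t-Margulis} (applied to the minimal action of $G_\omega^{+}$) would give a non-trivial $f\in\homeo^+(\Sbb^1)$ conjugate to a rotation centralizing $G_\omega^{+}$; choosing $I$ disjoint from $C_\Delta$ and small enough that $f(I)\cap I=\varnothing$, the element $f$ would centralize the non-trivial group $[G_I,G_I]$, which is impossible: if $fhf^{-1}=h$ with $h$ supported in $I$, then $h$ is supported in $I\cap f(I)=\varnothing$. This is the very same disjoint-support trick you used to prove $G_I$ non-abelian, applied one more time. Hence proximality of $G_\omega$ is \emph{forced} by the contradiction hypothesis, and your Case A argument finishes the proof.

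Two further remarks. First, your assertion that \enquote{the case we are genuinely in} is the non-proximal one misreads Proposition \ref{prop-S1}: the hypotheses of the lemma are compatible with both cases \eqref{i-minimal-not-proximal} and \eqref{i-minimal-proximal} of that proposition, and in the lemma's actual application (Proposition \ref{prop-st2-min-ptfix}) the eventual conclusion is that $G_\omega$ \emph{is} proximal. Second, your proposed transfer to $Z\backslash\Sbb^1$ could never be \enquote{arranged}: an element of $G_\omega^{+}$ supported in an interval $I$ with $c(I)\cap I=\varnothing$ commutes with the rotation-like element $c$ only if it is trivial, so the interval subgroups you would need to push down to the quotient are exactly the ones whose existence contradicts the case hypothesis. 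The obstruction you flagged as the technical heart is, correctly interpreted, not an obstacle to be overcome but the contradiction that closes the case.
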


\begin{proof}
In all the proof $\Delta$ is fixed, and for simplicity we write $C = C_\Delta$ and we denote by $S$ the stabilizer of $C$ in $G$. We want to show that $S$ acts transitively on $\Omega$.

Note that $G_{(\Delta)}\subset S$ by assumption, and that  by 3-transitivity the group $G_{(\Delta)}$ has two orbits on $\Omega$, namely $\Delta$ and its complement. Hence $S$ acts transitively on $\Omega$ if and only if $S$ contains $G_{(\Delta)}$ as a proper subgroup. Assume for a contradiction that $S =G_{(\Delta)}$. Given an open interval $I$ that does not intersect $C$, we clearly have $G_I \leq S$, so $G_I \leq G_{(\Delta)}$. In particular we have that the commutator subgroup $[G_I,G_I]$ lies inside $G_{\omega}$ for $\omega \in \Delta$. This implies that $G^+_{\omega}$ (hence $G_\omega$) is proximal. Otherwise, by Theorem \ref{t-Margulis} there exists a non-trivial $f$ conjugate to a rotation which centralizes $G_\omega^+$. Clearly we can choose $I$ as above small enough so that $f(I)\cap I=\varnothing$, and we see that $f$ cannot centralize $[G_I, G_I]$, reaching a contradiction. Thus $G_{\omega}$ is  proximal. Hence given an arbitrary proper closed interval $J$ in $\Sbb^1$, we may find $g \in G_{\omega}$ such that $g(J) \subseteq I$. In particular we have $gG_Jg^{-1} \leq G_I$ and $g[G_J,G_J]g^{-1} \leq [G_I,G_I] \leq G_{\omega}$. Since $g$ is in $G_{\omega}$ we deduce that $[G_J,G_J] \leq G_{\omega}$. The subgroup $N$ of $G$ generated by all these $[G_J,G_J]$ is therefore a non-trivial normal subgroup of $G$ that is contained in $G_{\omega}$. By transitivity we would deduce that $N$ is contained in $G_{\omega'}$ for every $\omega'\in \Omega$, and thus acts trivially on $\Omega$, contradicting that the action is faithful. \qedhere \end{proof}

The following statement is analogous to Proposition \ref{prop-trees-GDelta-focal}, and the proof follows essentially the same lines. We will refer to the proof of Proposition \ref{prop-trees-GDelta-focal}, and explain the modifications that are needed to adapt the arguments to the present setting.

\begin{prop} \label{prop-st2-ptfix-contr}
Let $G\le\homeo(\Sbb^1)$ whose action on $\Sbb^1$ is minimal, proximal, and not topologically free. Then  $G$ does not admit any faithful 3-transitive action on a set $\Omega$ with the following properties:
\begin{enumerate}
\item For  every $\omega\in \Omega$ the group $G_\omega$ acts minimally and proximally on $\Sbb^1$.
\item For every $\Delta\in \Omega^{\{ 2\}}$ the action of $G_\Delta$ on $\Sbb^1$ has a unique fixed point $z(\Delta)$, and the action of $G_\Delta$ on $\Sbb^1\setminus \{z(\Delta)\}$ is minimal.
\end{enumerate}
\end{prop}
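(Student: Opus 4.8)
The plan is to transport the proof of Proposition~\ref{prop-trees-GDelta-focal} almost verbatim, under the dictionary: the boundary point $\xi(\Delta)$ is replaced by the fixed point $z(\Delta)\in\Sbb^1$, half-trees by open intervals, and germs at $\xi(\Delta)$ by germs at $z(\Delta)$. Accordingly I set $L:=\Sbb^1\setminus\{z(\Delta)\}\cong\R$, let $S_\Delta:=G_{z(\Delta)}$ be the stabiliser of $z(\Delta)$ in $G$ (note $G_{(\Delta)}\le S_\Delta$, since any element of $G_{(\Delta)}$ normalises $G_\Delta$ and so fixes its unique fixed point $z(\Delta)$), and let $S_\Delta^0\lhd S_\Delta$ be the subgroup of elements acting trivially on a neighbourhood of $z(\Delta)$, i.e.\ supported in an interval whose closure lies in $L$. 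The map $\pi\colon\Omega^{\{2\}}\to\Sbb^1,\ \Delta\mapsto z(\Delta)$ is $G$-equivariant, exactly as $\Delta\mapsto\xi(\Delta)$ was. Non-topological-freeness makes $S_\Delta^0$ non-trivial, and in fact non-abelian: each $G_I$ with $\overline I\subset L$ is non-abelian, for given a non-trivial $f\in G_I$ one picks a subinterval $J\subset I$ with $f(J)\cap J=\varnothing$, and then $f$ cannot commute with the non-trivial group $G_J$.

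With this dictionary the first three steps go through unchanged. First, $S_\Delta$ acts transitively on $\Omega$ and strictly contains $G_{(\Delta)}$: this is Lemma~\ref{lem-stab-trans-ome} applied with the closed invariant set $C_\Delta=\{z(\Delta)\}$. Second, the fibre $\mathcal{P}_\Delta:=\pi^{-1}(z(\Delta))$ is a partition of $\Omega$ into blocks of size $2$ on which $S_\Delta$ acts $2$-transitively, mirroring Lemma~\ref{lem-tree-SDelta-2tran}; the only point to adapt is that two blocks cannot meet, and if they met in a point $w$ then $G_w=\langle G_{\Delta'},G_{\Delta''}\rangle$ would fix $z(\Delta)$, contradicting the \emph{minimality} of $G_w$ on $\Sbb^1$ (this replaces the general-type contradiction of the tree case). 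Third, primitivity of this block action forces the normal subgroup $S_\Delta^0$ to act trivially or transitively on the blocks; it cannot act trivially, as a non-abelian group does not embed into a product of groups of order $2$, so it is transitive and $S_\Delta^0 G_\Delta=S_\Delta$ as in Lemma~\ref{lem-tree-S-germs}.

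The one genuinely new ingredient, which I expect to be the main obstacle, is to upgrade the hypothesis that $G_\Delta$ is merely minimal on $L$ to the statement that it is \emph{proximal} on $L$; this is the circle analogue of the automatic tree fact that a minimal focal group moves any half-tree into any other, and it is exactly what will let one contract a given interval into an arbitrary target interval. I first observe that $G_\Delta$ is not topologically free on $L$: the subgroup $S_\Delta^0\cap G_\Delta$ has index at most $2$ in the non-abelian group $S_\Delta^0$ (the quotient records whether an element fixes or swaps the block $\Delta$), hence is non-trivial, and each of its elements is supported in a proper subinterval of $L$. Since a homeomorphism fixing an interval pointwise preserves the orientation, these elements lie in $G_\Delta^+$. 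I then apply Theorem~\ref{t-Mar-line} to $G_\Delta^+$ (which remains minimal on $L$): case~(1) is impossible because a group conjugate to translations acts freely, hence topologically freely, contradicting the elements just produced; in case~(2) there is a non-trivial $c\in\homeo^+(\R)$, conjugate to a translation and centralising $G_\Delta^+$, and taking a non-trivial $\gamma\in S_\Delta^0\cap G_\Delta^+$ with compact support $C\subset L$, a high power $c^N$ both commutes with $\gamma$ and satisfies $c^N(C)\cap C=\varnothing$, so $\gamma=c^N\gamma c^{-N}$ is supported in $c^N(C)$, forcing $\gamma=1$ — absurd. Hence case~(3) holds and $G_\Delta$ is proximal on $L$.

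Once proximality on $L$ is available the remaining two steps copy the tree argument. For the local exchange statement (analogue of Lemma~\ref{lem-tree-loc-exch}) I take an element of $G_{(\Delta)}\le S_\Delta$ swapping the two points of $\Delta$, write it as $hg$ with $h\in S_\Delta^0$, $g\in G_\Delta$ via $S_\Delta^0 G_\Delta=S_\Delta$, so that $h$ swaps $\Delta$ and is supported in some interval $\overline{B_0}\subset L$; proximality then yields $\gamma\in G_\Delta$ contracting $\overline{B_0}$ into any prescribed interval $A\subset L$, and $\gamma h\gamma^{-1}\in G_A$ still swaps $\Delta$. For the local $3$-cycle (analogue of Lemma~\ref{lem-tree-loc-3cyc}) I use that a non-trivial finite-order orientation-preserving homeomorphism of $\Sbb^1$ is conjugate to a rotation and so fixes no interval; hence the non-trivial orientation-preserving elements of $G_I$ have infinite order, $S_\Delta^0$ is not of exponent dividing $4$, and therefore is not an extension of two elementary abelian $2$-groups, which supplies an element inducing a cycle of length at least $3$ on $\mathcal{P}_\Delta$; conjugating by the $2$-transitive block action places it in some $G_A$. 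Finally I reproduce the endgame of Proposition~\ref{prop-trees-GDelta-focal}: take $s\in G_A$ with $s(\Delta)=\Delta'$ and $s(\Delta')\neq\Delta$, a pair $\Lambda$ meeting both $\Delta$ and $\Delta'$, and an interval $B$ disjoint from $A$ and avoiding $z(\Lambda)$; the local exchange statement gives $t\in G_B$ swapping $\Lambda$, hence swapping the blocks $\Delta$ and $\Delta'$, and since $s,t$ have disjoint support they commute, so $s(\Delta')=s(t(\Delta))=t(s(\Delta))=t(\Delta')=\Delta$, contradicting $s(\Delta')\neq\Delta$.
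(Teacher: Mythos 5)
Your overall route is the paper's own: the same equivariant map $\Delta\mapsto z(\Delta)$, the same three structural facts ($S_\Delta$ transitive and strictly larger than $G_{(\Delta)}$, the partition $\mathcal{P}_\Delta$ into blocks of size $2$ with $S_\Delta$ acting $2$-transitively on blocks, and $S_\Delta^0G_\Delta=S_\Delta$), the same upgrade to proximality of $G_\Delta^+$ on $L=\Sbb^1\setminus\{z(\Delta)\}$ via Theorem \ref{t-Mar-line}, and the same local-exchange/local-$3$-cycle/commuting-supports endgame. (Deducing transitivity of $S_\Delta$ from Lemma \ref{lem-stab-trans-ome} applied to $C_\Delta=\{z(\Delta)\}$ is a small shortcut relative to the paper, which instead redoes the tree argument of Lemma \ref{lem-tree-Strans}; both are fine.)

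However, the step you yourself single out as the crucial one contains a genuine error. You claim that $S_\Delta^0\cap G_\Delta$ has index at most $2$ in $S_\Delta^0$, ``the quotient recording whether an element fixes or swaps the block $\Delta$''. This is false, and it contradicts your own third step: there you prove that $S_\Delta^0$ acts \emph{transitively} on the blocks of $\mathcal{P}_\Delta$, of which there are infinitely many, so most elements of $S_\Delta^0$ do not preserve the block $\Delta$ at all (the fix-or-swap dichotomy is undefined for them), and $S_\Delta^0\cap G_\Delta$ has infinite index in $S_\Delta^0$. The conclusion you want --- that $G_\Delta^+$ contains a non-trivial element compactly supported in $L$, which is the input for excluding cases (1) and (2) of Theorem \ref{t-Mar-line} --- is true, but requires the argument the paper gives, whose ingredients you in fact use one paragraph later: take $g_0\in G_{(\Delta)}$ swapping the two points of $\Delta$, factor $g_0=hg$ with $h\in S_\Delta^0$ and $g\in G_\Delta$ using $S_\Delta^0G_\Delta=S_\Delta$, so that $h\in S_\Delta^0\cap G_{(\Delta)}$ is non-trivial (it swaps $\Delta$); since $S_\Delta^0$ is torsion-free (a non-trivial finite-order element of $\homeo^+(\Sbb^1)$ is conjugate to a rotation, hence fixes no interval, whereas $h$ fixes a neighbourhood of $z(\Delta)$), the element $h^2$ is a non-trivial element of $S_\Delta^0\cap G_\Delta$. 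In other words, the correct index-$2$ statement is for $S_\Delta^0\cap G_\Delta$ inside $S_\Delta^0\cap G_{(\Delta)}$, and one needs torsion-freeness to know the latter is infinite before that observation yields anything. With this repair the proof goes through. One further small omission in the endgame: your interval $B$ must avoid $z(\Delta)$ as well as $z(\Lambda)$, since otherwise $t\in G_B$ need not fix $z(\Delta)$ and you cannot conclude that $t$ preserves $\mathcal{P}_\Delta$; this costs nothing, as you only need $B$ disjoint from $A\cup\{z(\Delta),z(\Lambda)\}$.
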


\begin{proof}
We let $\pi\colon \Omega^{\{ 2\}}\to \Sbb^1$ be the map $\Delta \mapsto z(\Delta)$. Since $G_\Delta$ has index 2 in $G_{(\Delta)}$, we deduce that  $z(\Delta)\in \Sbb^1$ is also  fixed by $G_{(\Delta)}$. For simplicity we denote by $S_\Delta$ the stabilizer of $z(\Delta)$ in $G$. We also denote by $S_{\Delta}^0$ the subgroup of $S_{\Delta}$ consisting of elements having trivial germs around the point $z(\Delta)$, i.e.\ elements $g \in S_{\Delta}$ such that there exists an open interval $I \subset \Sbb^1$ such that $z(\Delta) \in I$ and $g$ acts trivially on $I$. Note that $S_\Delta^0$ consists of orientation-preserving homeomorphisms.

Exactly as in the proof of Proposition \ref{prop-trees-GDelta-focal}, one verifies the following properties:
\begin{enumerate}[label=(\alph*)]
\item \label{i-S-transitive} For $\Delta \in \Omega^{\{ 2\}}$, we have $G_{(\Delta)}\neq S_\Delta$, and the group $S_\Delta$ acts transitively on $\Omega$.
\item  \label{i-S-2transitive} For every $\Delta \in \Omega^{\{ 2\}}$, the fiber $\pi^{-1}(z(\Delta))$ forms a partition $\mathcal{P}_\Delta$ of $\Omega$ into blocks of size $2$, and $S_{\Delta}$ preserves $\mathcal{P}_\Delta$ and acts $2$-transitively on its blocks.
\item  \label{i-S-germs} For every $\Delta \in \Omega^{\{ 2\}}$, we have $S_{\Delta}^0 G_{\Delta} = S_{\Delta}$.

\end{enumerate}
The proof of \ref{i-S-transitive} is the same as Lemma \ref{lem-tree-Strans}, except that \textit{half-tree} is replaced by \textit{non-empty open interval}. The proof of \ref{i-S-2transitive} is identical to the proof of Lemma  \ref{lem-tree-SDelta-2tran}, and the proof of \ref{i-S-germs} is identical to the proof of Lemma \ref{lem-tree-S-germs}. 

%We further have the following:

The following is the analogue of Lemma \ref{lem-tree-loc-exch}. However here the proof differs in our situation.

% more substantially than the previous steps. Therefore we provide full details. 

\begin{lem}\label{lem-circle-loc-exch}
For every non-empty open interval $I\subset \Sbb^{1}$, there exists $g\in G_I$ which preserves $\Delta$ and exchanges the two elements of $\Delta$. 
\end{lem}

\begin{proof}
Let $g\in G_{(\Delta)}\le S_\Delta$ be an element which exchanges the two elements of $\Delta$. By property \ref{i-S-germs} above, we have $g=g'g''$ for some $g'\in S_{\Delta}^0$ and $g''\in G_\Delta$. Since $g''$ fixes $\Delta$ it follows that $g'$ must also exchange the two elements of $\Delta$. Thus, there exist a closed interval $J$ not containing $z(\Delta)$ and an element $g'\in G_J$ which exchanges the two elements of $\Delta$. We now want to upgrade this conclusion to show that this holds for every interval $I$.

The same argument also shows that the intersection $S^0_\Delta \cap G_{(\Delta)}$ is non-trivial. Since $S^0_\Delta$ is torsion free, this implies that $S^0
_\Delta \cap G_{(\Delta)}$ is infinite, and thus its index 2 subgroup $S^0
_\Delta \cap G_{\Delta}$ is also non-trivial. In other words, if we see   $G_\Delta^+$ as a group of homeomorphisms of $\R\simeq \Sbb^{1} \setminus\{z(\Delta)\}$, then $G_\Delta^+$ must contain elements of compact support, and thus $G_\Delta^+$ cannot be centralized by any element of $\homeo^+(\R)$ conjugate to a translation. Since $G_\Delta^+$ acts minimally on $\Sbb^{1} \setminus\{z(\Delta)\}$, Theorem \ref{t-Mar-line} therefore implies that the action of $G_\Delta^+$ on $\Sbb^1\setminus \{z(\Delta)\}$ is proximal. Hence if $I\subset \Sbb^1$ is a non-empty open interval, we can find $h\in G_\Delta$ such that $h(J)\subset I$, and consequently the element $hg'h^{-1}$  belongs to $G_I$ and exchanges the two elements of $\Delta$. 
 \qedhere
\end{proof}

Using these ingredients, the end of the proof  is similar as in Proposition \ref{prop-trees-GDelta-focal}. More precisely, we fix $\Delta \in \Omega^{\{2\}}$ and $\Delta'\in \mathcal{P}_\Delta$ distinct from $\Delta$. Reasoning as in the proof of Lemma \ref{lem-tree-loc-3cyc}, we can find a strict interval $I\subset \Sbb^1$ not containing $z(\Delta)$ and $s\in G_I$ such that $s(\Delta)=\Delta'$ and $s(\Delta')\neq \Delta$. Choose $\Lambda\in \Omega^{\{2\}}$ such that $\Lambda\cap \Delta$ and $\Lambda \cap \Delta'$ are non-empty. Choose a non-empty open interval $J$ disjoint from $I$ and which contains neither $z(\Delta)$ nor $z(\Lambda)$. We apply Lemma \ref{lem-circle-loc-exch} to the point $z(\Lambda)$ and find $t\in G_J$ which exchanges the two elements of $\Lambda$. Exactly as in Proposition \ref{prop-trees-GDelta-focal}, the element $t$ preserves the partition $\mathcal{P}_\Delta$ and hence $t$ exchanges the blocks $\Delta, \Delta'$. Since $t$ and $s$ commute by construction, we obtain a contradiction with the fact that $s(\Delta')$ is distinct from $\Delta$.
\end{proof}

We note that the following result is the only place in the proof of Theorem \ref{thm-intro-circle} where we need the assumption that distinct points of the circle have distinct stabilizers in $G^+$.

\begin{prop} \label{prop-st2-min-ptfix}
Let $G \leq \homeo(\Sbb^1)$ whose action on $\Sbb^1$ is minimal, proximal and not topologically free, and assume that two distinct points of the circle have distinct stabilizers in $G^+$. Suppose that $G$ acts $3$-transitively on a set $\Omega$ such that point stabilizers have a minimal action on the circle. Then the following hold:
\begin{enumerate} 
\item For  $\Delta\in \Omega^{\{ 2\}}$ the group $G_{\Delta}$ has a unique fixed point $z \in \Sbb^1$, and the action of $G_\Delta$ on $\Sbb^1\setminus \{z\}$ is minimal. 

\item For  $\omega \in \Omega$ the group $G_{\omega}$  acts proximally on $\Sbb^1$. 
\end{enumerate}
\end{prop}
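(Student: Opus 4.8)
The plan is to prove the two conclusions in the order (1) then (2): once the focal structure of $G_\Delta$ is known, proximality of point stabilizers follows formally. First I would record the reduction. Since point stabilizers act minimally, Proposition \ref{prop-S1} places us in its case \eqref{i-minimal-not-proximal} or \eqref{i-minimal-proximal}, so for every $\omega$ the group $G_\omega$ is minimal (either proximal or not), while Proposition \ref{p-Gdelta-not-minimal} guarantees that $G_\Delta$ is \emph{never} minimal on $\Sbb^1$. Thus $G_\Delta$ admits a canonical proper closed invariant set $C_\Delta$, namely its exceptional minimal set if it has one, and otherwise its (closed) set of periodic points; in both cases $C_\Delta$ is preserved by $G_{(\Delta)}$, and $C_\Delta \neq \Sbb^1$ because $G_\Delta$ is infinite (being transitive on $\Omega \setminus \Delta$). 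By Lemma \ref{lem-stab-trans-ome}, the stabilizer of $C_\Delta$ in $G$ then acts transitively on $\Omega$.

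The heart of the argument, and the step I expect to be the main obstacle, is conclusion (1): showing that $C_\Delta$ collapses to a single point $z(\Delta)$ and that $G_\Delta$ acts minimally on $\Sbb^1 \setminus \{z(\Delta)\}$. Here I would run the block-partition machinery from the focal case of the tree argument (Proposition \ref{prop-trees-GDelta-focal} and the lemmas preceding it), transported to the circle: using the $G$-equivariant map $\Delta \mapsto C_\Delta$, the transitivity of the stabilizer of $C_\Delta$, and proximality together with $3$-transitivity of the ambient action, to force the fibres of this map to organize $\Omega$ into size-$2$ blocks on which the relevant stabilizer acts $2$-transitively. The dynamical input — that the ambient action is minimal, proximal and not topologically free, so that $G$ contains non-trivial elements supported in arbitrarily small intervals — is what lets one push any invariant configuration into a small interval and thereby squeeze $C_\Delta$ down; the line version of the Margulis--Ghys theorem (Theorem \ref{t-Mar-line}) applied to $G_\Delta^+$ acting on $\Sbb^1 \setminus \{z(\Delta)\} \cong \R$ then upgrades \enquote{fixes $z(\Delta)$} to \enquote{minimal on the complement}. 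The assumption that distinct points of $\Sbb^1$ have distinct stabilizers in $G^+$ enters precisely to guarantee \emph{uniqueness} of the fixed point: if $G_\Delta$ fixed two distinct points $z_1 \neq z_2$, the focal structure on each complementary arc would force $G^+_{z_1} = G^+_{z_2}$, contradicting injectivity of $z \mapsto G_z^+$. This plays the role of the automatic uniqueness of the fixed end built into the notion of a \emph{focal} action on a tree, which on the circle is not free of charge.

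Granting conclusion (1), conclusion (2) is short. Since $G_\Delta$ fixes the single point $z = z(\Delta)$ and acts minimally on the line $\Sbb^1 \setminus \{z\}$, its index-at-most-two subgroup $G_\Delta^+$ cannot fix any point of that line (such a point would have a finite $G_\Delta$-orbit, contradicting minimality on a space with no isolated points), so $\fix(G_\Delta^+) = \{z\}$. Suppose now, for contradiction, that $G_\omega$ is not proximal for some $\omega \in \Delta$. As $G_\omega$ acts minimally and $G_\omega^+$ is non-abelian (being the index-two subgroup of a group carrying a faithful $2$-transitive action on the infinite set $\Omega \setminus \{\omega\}$), Theorem \ref{t-Margulis} provides a non-trivial finite-order $c \in \aut_{G_\omega^+}(\Sbb^1)$, which we may take conjugate to a rotation of angle $2\pi/n$ and hence \emph{fixed-point-free}. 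Because $c$ centralizes $G_\omega^+ \supseteq G_\Delta^+$, it preserves $\fix(G_\Delta^+) = \{z\}$, forcing $c(z) = z$; this contradicts that $c$ has no fixed point. Therefore $G_\omega$ is proximal, which is conclusion (2).

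In summary, conclusion (2) is a formal consequence of conclusion (1) via the fixed-point-free centralizer, whereas conclusion (1) requires the genuine dynamical classification of the non-minimal action of $G_\Delta$ — identifying its invariant set as a single point and its complementary action as minimal — and this is where both the combinatorial $3$-transitivity input and the distinct-stabilizers hypothesis are spent.
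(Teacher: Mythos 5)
Your set-up (the reduction via Propositions \ref{prop-S1} and \ref{p-Gdelta-not-minimal}, the canonical $G_{(\Delta)}$-invariant set $C_\Delta$, and the appeal to Lemma \ref{lem-stab-trans-ome}) is sound, and your deduction of conclusion (2) from conclusion (1) --- via $\fix(G_\Delta^+)=\{z\}$ and the fact that a non-trivial finite-order element of $\homeo^+(\Sbb^1)$ is fixed-point free --- is correct and matches the paper. The genuine gap is in conclusion (1), which you rightly call the heart but do not actually prove. First, the block-partition machinery cannot be \enquote{run} here: in the paper that machinery (Proposition \ref{prop-st2-ptfix-contr}, modelled on Proposition \ref{prop-trees-GDelta-focal}) takes as \emph{hypothesis} exactly the focal structure you are trying to establish --- a single fixed point $z(\Delta)$, around which one can speak of germs, and minimality on its complement --- and it sits downstream of this proposition, where it is used to derive the final contradiction; invoking it to prove conclusion (1) is circular. (The fibers-into-pairs part of that machinery does transport to the set-valued map $\Delta\mapsto C_\Delta$, with minimality of $G_\omega$ replacing the general-type assumption, but it only partitions $\Omega$; it does nothing to shrink $C_\Delta$.) Likewise, proximality does not \enquote{squeeze $C_\Delta$ down}: pushing $C_\Delta$ into a small interval by some $g\in G$ only makes the translate $C_{g(\Delta)}$ small and says nothing about $C_\Delta$ itself. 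Second, your stated use of the distinct-stabilizer hypothesis is not valid: if $G_\Delta$ fixes two points $z_1\neq z_2$, one only gets $G_\Delta^+\leq G_{z_1}^+\cap G_{z_2}^+$; the groups $G_{z_i}^+$ are stabilizers in the whole of $G^+$, vastly larger than $G_\Delta^+$, and nothing forces $G_{z_1}^+$ to fix $z_2$. Third, Theorem \ref{t-Mar-line} cannot \enquote{upgrade} a fixed point to minimality on its complement: minimality is a hypothesis of that theorem, not a conclusion.

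What is actually needed, and what the paper does, is the following. Starting from a proper closed $G_{(\Delta)}$-invariant set $K'$, Lemma \ref{lem-stab-trans-ome} gives $G_\omega S=G$ for the stabilizer $S$ of $K'$, so the $G$-orbit of $K'$ is $G_\omega$-equivariantly identified with $G_\omega/G_\Delta\simeq \Omega\setminus\{\omega\}$, on which $G_\omega$ acts $2$-transitively by $3$-transitivity of $G$ on $\Omega$. Combined with proximality (some translate of $K'$ lies inside a gap of $K'$), this yields the key dichotomy: every $G$-translate of $K'$ either equals $K'$ or is contained in a connected component of its complement. Only now does the distinct-stabilizer hypothesis enter: if $K'$ had at least two points, take a complementary component $I$ with endpoints $a\neq b$; since $G_a^+$ cannot move $K'$ off itself, the dichotomy forces $G_a^+$ to stabilize $K'$, hence (being orientation-preserving and fixing $a$) to preserve $I$ and fix $b$, whence $G_a^+=G_b^+$, a contradiction. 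Thus \emph{every} proper closed $G_\Delta$-invariant subset of $\Sbb^1$ is a singleton; this gives at once the unique fixed point $z$ and minimality on $\Sbb^1\setminus\{z\}$ (a non-empty closed invariant set in the complement, together with $z$, would be a proper closed invariant set with at least two points). Your proposal assembles the ingredients for the first step but never establishes the dichotomy or the endpoint argument, and these are precisely where the hypothesis on stabilizers is spent.
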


\begin{proof}
By Proposition \ref{p-Gdelta-not-minimal}, we know  that $G_{\Delta}$ does not act minimally. Let $K\subset \Sbb^1$ be a closed proper invariant subset. If $\tau$ is an element of $G_{(\Delta)}$ outside $G_{\Delta}$, we denote $K' = K \cup \tau(K)$, which is a proper subset of the circle that is invariant by $G_{\Delta}$. We denote by $S$ the stabilizer of $K'$ in $G$, so that $G_\Delta\leq S$. For $\omega\in \Delta$ observe that since $G_{\omega}$ is minimal on the circle and by maximality of $G_{\Delta}$ in $G_{\omega}$, we have that $G_{\Delta}$ is exactly the stabilizer of $K'$ in $G_{\omega}$. 

According to Lemma \ref{lem-stab-trans-ome} we have $G_{\omega} S = G$. Hence the map from $G_{\omega} / G_{\Delta}$ to $G/S$ is onto, so that $G_{\omega}$ acts $2$-transitively on the $G$-orbit of $K'$. By proximality of the $G$-action, there are elements $g$ in $G$ that send $K'$ inside a connected component of the complement of $K'$. Since $G_{\omega}$ acts $2$-transitively on the $G$-orbit of $K'$, it follows that every $g(K')$ for some $g \in G$ is either equal to $K'$, or contained in a connected component of the complement of $K'$. 

Now argue by contradiction and assume that $K'$ has cardinality at least two. Let $I$ be a connected component of the complement of $K'$, and let $a,b \in K'$ be the endpoints of $I$. We consider the stabilizer $G_a^+$ of $a$ in $G^+$. Since $G_a^+$ does not send $K'$ disjoint from itself, by the previous paragraph we have that $G_a^+$ stabilizes $K'$. In particular $G_a^+$ must send $I$ to another  connected component, which is necessarily $I$ since the point $a$ is fixed. Hence we deduce that the point $b$ is also fixed by $G_a^+$, so that $a$ and $b$ are distinct points of the circle which have the same stabilizers in $G^+$. By assumption this cannot happen. Hence $K'$ is a singleton, and a fortiori $K$ is a singleton.

We have thus shown that $G_{\Delta}$ has a unique fixed point $z$, and that $\{z\}$ is the unique proper closed subset of $\Sbb^1$ which is invariant under $G_\Delta$. In particular, $G_\Delta$ acts minimally on $\Sbb^1\setminus \{z\}$. 

Finally the action of the group $G_\omega^+$ on $\Sbb^1$ must be proximal. Indeed if $c\in \homeo_+(\Sbb^1)$ is an element of finite order that centralizes $G_\omega^+$, then $c$ also centralizes $G_\Delta^+$ for every $\Delta$ containing $\omega$. Therefore $c$ must fix the point $z(\Delta)$, and we deduce that $c=\operatorname{Id}$.  \qedhere
\end{proof}

We are finally able to complete the proof:

\begin{proof}[Proof of Theorem \ref{thm-intro-circle}]
Suppose that $G\le \homeo(\Sbb^1)$ satisfies the assumptions of the theorem, and assume that $\Omega$ is a set on which $G$ acts faithfully and 3-transitively. By Proposition \ref{prop-S1}, if the action of $G$ on $\Omega$ is not conjugate to the action on an orbit in $\Sbb^1$, then for every $\omega\in \Omega$ the group $G_\omega$ acts minimally on $\Sbb^1$. In this situation we can successively apply Proposition \ref{prop-st2-min-ptfix}  and Proposition \ref{prop-st2-ptfix-contr}, and we reach a contradiction. \qedhere
\end{proof}

\subsection{An example: Thompson's group $T$} \label{subsec:ThompsonT}
%Let us briefly recall the definition of Thompson's groups and prove Corollary \ref{cor-intro-S1-T}. 
Recall that Thompson's group $T$ is the group of orientation preserving homeomorphisms $g$ of $\R/\Z$ that are piecewise linear, with finitely many discontinuity points for the derivative, each being a dyadic rational (i.e. a rational number whose denominator is a power of 2), and such that in restriction to each piece $g$ has the form $x\mapsto 2^nx+q$ with $n\in \Z$ and $q$ a dyadic rational. The group $T^{\pm}$ is the group of homeomorphisms of $\R/\Z$ of the form $x\mapsto \pm g(x)$, with $g\in T$. The group $T^{\pm}$ contains the group $T$ as a subgroup of index $2$.

\begin{proof}[Proof of Corollary \ref{cor-intro-S1-T}]
It is well-known and easy to see that the group $T$ acts 2-transitively on the set of dyadic rationals in  $\R/\Z$, and thus $td(T)\geq 2$. On the other hand the action of the group $T$ on the circle is minimal, proximal and not topologically free, so that $td(T)\le 2$ according to Theorem \ref{thm-bis-intro-circle}. Therefore  $td(T) =  2$.

Similarly the group $T^\pm$ acts 3-transitively on the dyadic rationals, and $td(T^\pm )\leq 3$ by Theorem \ref{thm-bis-intro-circle}, so $td(T^\pm ) = 3$.
\end{proof}

%We note also that different points in $\R/\Z$ have distinct stabilizers in $T$. Therefore, Theorem \ref{thm-intro-circle} implies that every 3-transitive action of the group $T^\pm$ is conjugate to the action on one orbit in $\R/\Z$. We do not know whether an analogous result holds for $\T$. 

Distinct points in $\R/\Z$ have distinct stabilizers in $T$, so that the group $T^\pm$ satisfies all the assumptions of Theorem \ref{thm-intro-circle}. Therefore the only 3-transitive actions of the group $T^\pm$ are the actions on an orbit in $\R/\Z$. We do not know whether an analogous result holds for the group $T$. 

\begin{question}
Does Thompson's group $T$ admit a 2-transitive action which is not conjugate to the action on an orbit in $\R/\Z$ ?
\end{question}

\section{Groups acting on the real line} \label{sec-line}

In this section we consider groups acting on the real line. Given $G \leq \homeo(\mathbb{R})$, we denote by $G^0$ the compactly supported elements of $G$. For $x \in \mathbb{R}$ we denote by $G_x^-$ and $G_x^+$ the subgroups of the stabilizer $G_x$ that are respectively supported in $(-\infty,x]$ and $[x,+ \infty)$. 

Recall that a permutation group $G \leq \Sym(\Omega)$ is \textbf{regular} if $G$ is transitive and $G_\omega = 1$ for every $\omega \in \Omega$.

\begin{prop} \label{prop-line-orbits-disj}
Suppose that $G \leq \homeo^+(\mathbb{R})$ acts on $\mathbb{R}$ with no global fixed points, and $G^0 \neq 1$. Let $\Omega$ be a set on which $G$ acts faithfully and $2$-transitively. Then at least one of the following happens:
\begin{enumerate}
	\item $G^0$ is regular on $\Omega$;
	\item for every $x \in \mathbb{R}$, for every orbit $O^- \subset \Omega $ of $G_x^-$ and $O^+ \subset \Omega $ of $G_x^+$, we have $|O^- \cap O^+| \leq 1$.
\end{enumerate}
\end{prop}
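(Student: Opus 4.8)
The plan is to show that the two alternatives cannot both fail, i.e. that ``$G^0$ non-regular'' together with a violation of (2) leads to a contradiction. First I would record the structural features of $G^0$. Conjugation by an orientation-preserving homeomorphism preserves compactness of supports, so $G^0 \lhd G$; since $G^0$ is non-trivial and $G$ is $2$-transitive (hence primitive), $G^0$ acts transitively on $\Omega$ and all the point stabilizers $(G^0)_\omega$ are conjugate in $G$. Thus (1) holds exactly when $(G^0)_\omega = 1$, and negating (1) means that every $(G^0)_\omega$ is non-trivial. It then remains to rule out the simultaneous failure of (2): that for some $x$ there are orbits $O^-$ of $G_x^-$ and $O^+$ of $G_x^+$ meeting in two distinct points $\omega_1,\omega_2$. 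Unwinding this, there are $g\in G_x^-$ and $h\in G_x^+$ with $g\omega_1=h\omega_1=\omega_2$; since $g,h$ are supported respectively in $(-\infty,x]$ and $[x,+\infty)$ they commute, and the element $c:=g^{-1}h$ is a non-trivial member of $G_{\omega_1}$ whose restrictions to the two half-lines are $g^{-1}$ and $h$.

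Two elementary observations would drive the argument. The first is a rigidity coming from commutation: from $g\omega_1=h\omega_1$ and $[g,h]=1$ one gets $g^n\omega_1=h^n\omega_1$ for all $n$ by induction, so $c=g^{-1}h$ fixes the entire orbit $\{g^n\omega_1:n\in\Z\}$ pointwise. The second is that the hypothesis of no global fixed point on $\mathbb{R}$, combined with orientation preservation, forces every $G$-orbit in $\mathbb{R}$ to be unbounded above and below: if some orbit $Gy$ had a finite supremum $s$, then $s$ would be fixed by every (increasing, orbit-preserving) element of $G$, contradicting the hypothesis. Consequently, given any compact interval and any $x$, one can push the interval entirely to the right of $x$ (or to its left) by an element of $G$, which lets me relocate the supports of compactly supported elements at will.

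With these in hand, I would combine a non-trivial compactly supported element $f\in(G^0)_{\omega_1}$ (available since (1) fails) with the crossing element $c$ in order to manufacture a non-trivial normal subgroup of $G$ contained in a single point stabilizer; by transitivity and faithfulness (a non-trivial normal subgroup must be transitive, as recalled in the Preliminaries) this would be the desired contradiction. Concretely, the aim is to show that $G_{\omega_1}$ contains the commutator subgroups $[G_I,G_I]$ of the pointwise stabilizers $G_I$ of the complements of arbitrarily placed intervals $I$: the commuting half-line structure allows one to trap such commutators inside $G_{\omega_1}$ near $x$, and the unbounded-orbit fact allows one to spread them, so that the subgroup they generate is normal in $G$ yet fixes $\omega_1$.

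I expect the main obstacle to be precisely this coordination of the two actions: an element of $G$ acts simultaneously on $\mathbb{R}$ and on $\Omega$, so one cannot freely displace a support in $\mathbb{R}$ without also moving the relevant point of $\Omega$. The key to overcoming it is to carry out all manipulations inside the stabilizers $G_{\omega_1}$ and $G_{\{\omega_1,\omega_2\}}$, exploiting the commutation of $G_x^-$ with $G_x^+$ to keep the chosen point of $\Omega$ fixed while the supports in $\mathbb{R}$ are moved. As a fallback, in the spirit of the order argument used in Proposition \ref{prop-S1}, I would instead attach $\mathbb{R}$-ordered data to the crossing and derive a contradiction from an order-reversing element of a pair stabilizer, should the normal-subgroup approach prove too delicate.
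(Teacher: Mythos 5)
Your setup is sound: $G^0$ is indeed normal, hence transitive by primitivity; failure of (1) is indeed equivalent to the stabilizers $(G^0)_\omega$ being non-trivial; the crossing element $c=g^{-1}h$ does fix the $\langle g\rangle$-orbit of $\omega_1$; and all $G$-orbits in $\R$ are unbounded in both directions (this last fact is also used in the paper). But the heart of your argument is missing, and the obstacles are real, not just technical. The central claim --- that $G_{\omega_1}$ contains $[G_I,G_I]$ for arbitrarily placed intervals $I$ --- is stated as an aim with no mechanism behind it. In the circle argument you are imitating (Lemma \ref{lem-stab-trans-ome}), the trapping comes from a genuine inclusion $G_I\le G_{(\Delta)}$ valid for \emph{every} interval $I$ disjoint from an invariant set, and the spreading comes from proximality of $G_\omega$ acting on $\Sbb^1$, which lets one map an arbitrary interval into $I$ by an element fixing $\omega$. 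Here neither ingredient exists: the failure of (2) produces data at a single point $x$ and constrains nothing about elements supported in other intervals, and no dynamical hypothesis (minimality, proximality) is made on the $\R$-action, so you have no way to move intervals around by elements of $G_{\omega_1}$. There is a second, independent gap: even granting the trapping, your normal subgroup could be trivial, since nothing in the hypotheses prevents all compactly supported elements of $G$ from commuting with one another, in which case every $[G_I,G_I]=1$. (This particular hole is repairable --- if $G^0$ is abelian then, being transitive, it is regular and (1) holds --- but you would need to say so.)

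For comparison, the paper's proof runs on a different and more elementary engine: it is a dichotomy, for fixed $x$, on whether $G_x^-$ (or $G_x^+$) acts transitively on $\Omega$. If it does, any non-trivial $g\in G^0$ can be conjugated into $G_x^+$ (pushing its compact support past $x$, using unboundedness of orbits); the conjugate then commutes with the transitive group $G_x^-$, hence acts freely on $\Omega$, and it follows that $G^0$ is regular, giving (1). If both $G_x^{\pm}$ have at least two orbits, the key observation is that $u(x)\le x$ implies $uG_x^-u^{-1}\le G_x^-$, so such $u$ maps each $G_x^-$-orbit into a single $G_x^-$-orbit (symmetrically for $+$); combined with $2$-transitivity this forces $|O^-\cap O^+|\le 1$ directly. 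Note that this second half never uses the failure of (1), so the statement is proved as a true dichotomy rather than by the double contradiction you set up. That conjugation observation is the engine of the proof and is absent from your proposal; your observations about the crossing element $c$, while correct, do not substitute for it.
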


\begin{proof}
Since $G^0$ is a non-trivial normal subgroup of $G$ and the action of $G$ on $\Omega$ is $2$-transitive, $G^0$ acts transitively on $\Omega$. Fix $x \in \mathbb{R}$, and assume that $G_x^-$ acts transitively on $\Omega$. Let $g \in G^0$, $g \neq 1$, and let $C$ be a compact interval into which $g$ is supported. Since $G$ acts on $\mathbb{R}$ with no fixed points, we may find $h \in G$ such that $h(C) \subset [x,+ \infty)$. It follows that $hgh^{-1} \in G_x^+$, and hence commutes with $G_x^-$. Since $G_x^-$ acts transitively on $\Omega$ and $G_x^-$ commutes with $G_x^+$, we deduce that $hgh^{-1}$ has no fixed point in $\Omega$, and therefore $g$ has no fixed point in $\Omega$. So any non-trivial element of $G^0$ has no fixed point in $\Omega$, and $G^0$ is regular. The argument is the same if $G_x^+$ acts transitively on $\Omega$.

Hence we assume that $G_x^-$ and $G_x^+$ both have at least two orbits in $\Omega$, and we denote these orbits respectively by $(O_i^-)_{i \in I}$ and $(O_j^+)_{j \in J}$. We will show that the second situation holds. We make the observation that for $g \in G$, if $g(x) \leq x$ then $g G_x^- g^{-1} \leq G_x^-$, and hence for every $i \in I$ there exists $i' \in I$ such that $g(O_i^-) \subset O_{i'}^-$. Similarly if $g(x) \geq x$ then $g G_x^+ g^{-1} \leq G_x^+$, and for every $j \in J$ there exists $j' \in J$ such that $g(O_j^+) \subset O_{j'}^+$.

Suppose for a contradiction that there exist $O_i^-$ and $O_j^+$ such that $|O_i^- \cap O_j^+|$ contains at least two points, and fix $\omega_1,\omega_2 \in O_i^- \cap O_j^+$ two distinct points. Suppose that there is $\ell \in J$ such that $O_\ell^+ \neq O_j^+$ and $O_\ell^+$ is not contained in $O_i^-$, and choose $\omega_3 \in O_\ell^+$ such that $\omega_3 \notin O_i^-$. By $2$-transitivity we know that there is $g \in G$ such that $g(\omega_1) = \omega_1$ and $g(\omega_2) = \omega_3$. If $g(x) \leq x$ then by the observation above and the fact that $g$ fixes $\omega_1$, we would have $g(O_i^-) \subset O_i^-$ and hence $\omega_3 \in O_i^-$, a contradiction. Similarly if $g(x) \geq x$ then $g(O_j^+) \subset O_j^+$ and $\omega_3 \in O_j^+$, which is also a contradiction. Hence all possibilities lead to a contradiction, and it follows that every $O_\ell^+$ distinct from $O_j^+$ is contained in $O_i^-$. By the same argument every $O_k^-$ distinct from $O_i^-$ is contained in $O_j^+$. 

Fix $O_\ell^+$ distinct from $O_j^+$ and $O_k^-$ distinct from $O_i^-$, and let $\omega_3 \in O_\ell^+$ and $\omega_4 \in O_k^-$. Again there is $g \in G$ such that $g(\omega_1) = \omega_3$ and $g(\omega_3) = \omega_4$. By a similar argument as before, if $g(x) \leq x$ then $g(O_i^-) \subset O_i^-$ and $\omega_4 \in O_i^-$; and if $g(x) \geq x$ then $g(O_j^+) \subset O_j^+$ and $\omega_3 \in O_j^+$. So in any case we have reached a contradiction, so we have shown that $|O_i^- \cap O_j^+| \geq 2$ is impossible.
\end{proof}

\begin{prop} \label{prop-line-td2}
Suppose that $G \leq \homeo^+(\mathbb{R})$ acts on $\mathbb{R}$ with no fixed points, and $G^0 \neq 1$. Then the transitivity degree of $G$ is at most $2$.
\end{prop}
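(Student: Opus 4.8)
The plan is to argue by contradiction: suppose $G$ admits a faithful $3$-transitive action on a set $\Omega$. Since a $3$-transitive action is $2$-transitive, hence primitive, and $G^0$ is a non-trivial normal subgroup of $G$, the subgroup $G^0$ acts transitively on $\Omega$. Two preliminary observations will be used throughout. First, $\homeo^+(\mathbb{R})$ is torsion-free: a non-trivial orientation-preserving homeomorphism $g$ is strictly increasing, so if $g(t)>t$ at some point then $g^n(t)>t$ for all $n\geq 1$ (and symmetrically if $g(t)<t$), whence $g$ has infinite order. In particular $G^0$ is torsion-free and, being non-trivial and acting faithfully, $\Omega$ is infinite. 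Second, since $G$ has no global fixed point, every $G$-orbit in $\mathbb{R}$ is unbounded above and below: if $\sup(Gp)=s<\infty$ for some $p$, then every $g\in G$ fixes $s$ by continuity and monotonicity, contradicting the absence of fixed points. Consequently any non-trivial element of $G^0$ can be conjugated to have support in $(x,+\infty)$ or in $(-\infty,x)$ for any prescribed $x$, so $G_x^-\neq 1$ and $G_x^+\neq 1$ for every $x$. I would then feed the $2$-transitivity into Proposition~\ref{prop-line-orbits-disj}, which leaves exactly two possibilities: either $G^0$ is regular on $\Omega$, or the disjointness property (2) holds.

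First I treat the regular case. Here I would invoke the classical description of a regular normal subgroup: writing $N=G^0$ and identifying $\Omega$ with $N$ (base point the identity $e$, with $N$ acting by left translation), the stabilizer $H=G_e$ acts on $N$ by conjugation automorphisms, and $3$-transitivity of $G$ makes $H$ act $2$-transitively on $N\setminus\{e\}$. The key elementary remark is that any $\phi\in H$ fixing an element $a\in N\setminus\{e\}$ must also fix $a^2$, because $\phi$ is an automorphism; as $N$ is torsion-free, $a^2$ is a point of $(N\setminus\{e\})\setminus\{a\}$ fixed by the \emph{entire} stabilizer $H_a$. Since $H_a$ is transitive on $(N\setminus\{e\})\setminus\{a\}$ by $2$-transitivity, this set has at most one element, forcing $|N|\leq 3$ and contradicting that $N$ is infinite. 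This rules out the regular case.

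The disjointness case is the step I expect to be the main obstacle. Fix $x\in\mathbb{R}$ and set $M=G_x^-$, $P=G_x^+$; both are non-trivial by the preliminary observation, they commute (their supports meet only at the fixed point $x$), and each is normal in $G_x$ (an element of $G_x$ preserves $(-\infty,x]$ setwise). The hypothesis says that every $M$-orbit meets every $P$-orbit in at most one point, which imposes a ``grid'' structure on $\Omega$ and yields, concretely, the two symmetric facts that $P_{\omega_0}$ fixes the whole $M$-orbit of $\omega_0$ pointwise and $M_{\omega_0}$ fixes the whole $P$-orbit of $\omega_0$ pointwise (if $q$ commutes with $M$ and fixes $\omega_0$ then $q(m\omega_0)=m(q\omega_0)=m\omega_0$). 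The plan is to convert this rigidity into a violation of $3$-transitivity by exhibiting two points of $\Omega$ whose pointwise stabilizer fixes a third point. The delicate point is that the grid partitions are only $G_x$-invariant, not $G$-invariant, so the local product structure must be played off against the global $3$-transitivity. I expect the decisive mechanism to be the ``disjoint supports commute, and a common fixed point then propagates'' phenomenon: already in the degenerate sub-case where point stabilizers act regularly (that is, $G^0$ is sharply $2$-transitive), any two compactly supported elements with disjoint supports commute, their centralizers preserve one another's fixed sets, and one is forced to conclude that all non-trivial compactly supported elements share a single fixed point of $\Omega$ — contradicting transitivity of $G^0$. Carrying this propagation through under the general disjointness hypothesis, while organizing the argument according to the sizes of the $M$- and $P$-orbits, is where the real work lies; once some two-point stabilizer is shown to fix a further point, $3$-transitivity is contradicted and the proposition follows, giving $td(G)\leq 2$.
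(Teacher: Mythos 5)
Your global structure (argue by contradiction, invoke Proposition~\ref{prop-line-orbits-disj} to split into the regular case and the disjointness case) is the same as the paper's, and your treatment of the regular case is correct. In fact it is a nice self-contained substitute for the paper's citation of Dixon--Mortimer (Theorem~7.2.A): your observation that any automorphism fixing $a$ also fixes $a^2$, combined with torsion-freeness of $G^0\le\homeo^+(\R)$ and $2$-transitivity of the point stabilizer on $N\setminus\{e\}$, forces $|N|\le 3$, which is exactly the standard proof that a regular normal subgroup of a $3$-transitive group is elementary abelian of exponent~$2$.

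The genuine gap is the disjointness case, which you explicitly leave unresolved (``where the real work lies''): what you give there is a heuristic plan, and the mechanism you propose (propagation of common fixed points through commuting supports, organized by orbit sizes) is not the one that closes the argument. The paper finishes with two steps you are missing. First, it proves that $G_x^+$ (and symmetrically $G_x^-$) fixes \emph{no} point of $\Omega$: if $G_x^+$ fixed $\omega$, then $G_\omega$ could have no fixed point in $\R$ (otherwise, by maximality of $G_\omega$, the action on $\Omega$ would be conjugate to the action on a $G$-orbit in $\R$, which is never $3$-transitive for order-preserving homeomorphisms); hence every $g\in G^0$, having compact support, can be conjugated by an element of $G_\omega$ into $G_x^+$, so $g$ fixes $\omega$, and the transitive normal subgroup $G^0$ would fix $\omega$ --- a contradiction. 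This guarantees that every $G_x^{\pm}$-orbit has at least two points, which your proposal never establishes ($G_x^{\pm}\neq 1$ is weaker). Second --- and this is the decisive lever you do not identify --- for any $g\in G$ there is an order dichotomy: either $g(x)\le x$, in which case $gG_x^-g^{-1}\le G_x^-$ so $g$ maps each $G_x^-$-orbit into a $G_x^-$-orbit, or $g(x)\ge x$, in which case the same holds for $G_x^+$-orbits. (This observation is already used inside the proof of Proposition~\ref{prop-line-orbits-disj}, so it was available to you.) Given these two facts the contradiction is immediate: take $\omega_1$ with $G_x^-$-orbit $O^-$ and $G_x^+$-orbit $O^+$, so $O^-\cap O^+=\{\omega_1\}$ by disjointness; choose $\omega_2\in O^-\setminus\{\omega_1\}$ and $\omega_3\in O^+\setminus\{\omega_1\}$, and by $3$-transitivity pick $g$ fixing $\omega_1$ and swapping $\omega_2,\omega_3$. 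If $g(x)\le x$ then $g(O^-)\subseteq O^-$ (it meets $O^-$ at $\omega_1$), so $\omega_3\in O^-$, contradicting $O^-\cap O^+=\{\omega_1\}$; if $g(x)\ge x$ the symmetric argument gives $\omega_2\in O^+$. Without these two steps your proof does not go through, so as it stands the proposal is incomplete.
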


\begin{proof}
Suppose for a contradiction that $\Omega$ is a set on which $G$ acts faithfully and $3$-transitively, and fix $x \in \mathbb{R}$. We first claim that the group $G_x^+$ does not fix a point in $\Omega$. Otherwise, let $\omega \in \Omega$ that is fixed by $G_x^+$. If $G_\omega$ fixes a point $z \in \mathbb{R}$ then by maximality of $G_\omega$ we would have $G_\omega = G_z$, and the $G$-action on $\Omega$ would be conjugate to the $G$-action on the orbit $G(z)$, which clearly contradicts $3$-transitivity. So $G_\omega$ does not fix a point in $\mathbb{R}$. It follows that every element $g \in G^0$ can be conjugated inside $G_x^+$ with an element of $G_\omega$, and hence that $g$ fixes $\omega$ because $G_x^+$ does. Therefore the normal subgroup $G^0$ fixes the point $\omega$, and hence $G^0$ is trivial by $2$-transitivity. This is a contradiction. So the group $G_x^+$ does not fix a point in $\Omega$, and the same argument applies to $G_x^-$.

We now apply Proposition \ref{prop-line-orbits-disj}. If $G^0$ is regular on $\Omega$, then by $3$-transitivity $G^0$ would be an elementary abelian $2$-group \cite[Theorem 7.2.A]{Dixon-Mortimer}, which is absurd because $G^0$ is torsion free. Therefore whenever $O^-$ and $O^+$ are orbits under $G_x^-$ and $G_x^+$ in $\Omega$, we have $|O^- \cap O^+| \leq 1$. The end of the proof is similar as the argument in Proposition \ref{prop-line-orbits-disj}. Fix $O^-$ and $O^+$ which intersect each other along a singleton $\omega_1$. By the first paragraph $O^-$ and $O^+$ have cardinality at least $2$, so that we may find $\omega_2 \in O^-$ and $\omega_3 \in O^+$ such that $\omega_1,\omega_2,\omega_3$ are all distinct. Since $G$ acts $3$-transitively on $\Omega$, there exists $g \in G$ such that $g(\omega_1) = \omega_1$, $g(\omega_2) = \omega_3$ and $g(\omega_3) = \omega_2$. If $g(x) \leq x$ then $g(O^-) \subset O^-$ and $\omega_3 \in O^-$; and if $g(x) \geq x$ then $g(O^+) \subset O^+$ and $\omega_2 \in O^+$. In both cases we have a contradiction because $\omega_3 \notin O^-$ and $\omega_2 \notin O^+$ by definition. This terminates the proof.
\end{proof}

Hull and Osin asked the question of computing the transitivity degree of Thompson's group $F$ \cite{Hull-Osin}. The following partial answer follows from Proposition \ref{prop-line-td2}:

\begin{cor} \label{cor-line-F}
The transitivity degree of Thompson's group $F$ is at most $2$.
\end{cor}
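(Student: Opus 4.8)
The plan is simply to verify that Thompson's group $F$ satisfies the hypotheses of Proposition \ref{prop-line-td2}, after which the conclusion is immediate, since the transitivity degree is an isomorphism invariant of the abstract group. Recall that $F$ is defined as the group of orientation-preserving piecewise-linear homeomorphisms of $[0,1]$ with finitely many breakpoints, all dyadic, and all slopes powers of $2$. Every element of $F$ fixes the two endpoints $0$ and $1$, so the action restricts to the open interval $(0,1)$, and it is on this interval that I would work.

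First I would fix an orientation-preserving homeomorphism $(0,1)\to\R$ (for instance $t\mapsto \log\!\left(t/(1-t)\right)$, which is increasing and sends $0^+$ to $-\infty$ and $1^-$ to $+\infty$), and use it to realize $F$ as a subgroup of $\homeo^+(\R)$. This change of model is harmless, as it affects only the realization and not the abstract isomorphism type, hence not $td(F)$. Next I would check the two remaining hypotheses. For the absence of global fixed points, I would invoke the standard fact that $F$ acts transitively on the set of dyadic rationals of $(0,1)$; consequently no point of $(0,1)$, equivalently no point of $\R$ under the above identification, is fixed by all of $F$. For the existence of non-trivial compactly supported elements, I would exhibit a non-trivial element of $F$ supported in a compact subinterval such as $[1/4,3/4]\subset(0,1)$; under the identification $(0,1)\cong\R$ such an element lies in $F^0$, so $F^0\neq 1$.

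With all hypotheses in place, Proposition \ref{prop-line-td2} yields $td(F)\leq 2$. There is essentially no genuine obstacle here: the only point requiring care is the choice of model for the action. The most familiar action of $F$ on $[0,1]$ has the two global fixed points $0$ and $1$, so one cannot apply the proposition directly; the key observation is that passing to the interior $(0,1)\cong\R$ removes these fixed points while retaining an abundance of compactly supported elements, exactly as the hypotheses of Proposition \ref{prop-line-td2} demand.
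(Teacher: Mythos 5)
Your proposal is correct and is exactly the argument the paper intends: the paper derives Corollary \ref{cor-line-F} by applying Proposition \ref{prop-line-td2} to $F$, leaving implicit the verification you spell out (identifying $(0,1)\cong\R$, no global fixed points via transitivity on dyadics, and non-trivial elements supported in a compact subinterval). Since $td$ is an invariant of the abstract group, your change of model is indeed harmless, and nothing further is needed.
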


Recently certain maximal subgroups of $F$ were investigated by Golan and Sapir in \cite{Golan-Sapir-max}. Equivalently, these correspond to primitive actions of $F$. One maximal subgroup exhibited there is the stabilizer of a partition of an orbit of $F$ in the interval \cite{Golan-Sapir-JS}. We do not know whether subgroups of this kind, i.e.\ subgroups $H \leq F$ such that there exists an $F$-orbit $\mathcal{O} \subset [0,1]$ and a partition $\mathcal{P}$ of $\mathcal{O}$ such that $H$ is the stabilizer of $\mathcal{P}$; could give rise to an action of $F$ on $F/H$ that is $2$-transitive.

\newpage 

\appendix

\section{Mixed identities and transitivity degree}

%\begin{defin}
Let $G$ be a group, and $w \in G \ast \mathbb{Z}$. In the sequel by the \textbf{length} of $w$ we mean the word length of $w$ with respect to the generating subset $S = G \cup \left\{t^{\pm 1}\right\}$, where $t$ is a generator of $\mathbb{Z}$. Recall that it is the smallest integer $\ell$ such that there exist $s_1,\ldots,s_\ell \in S$ such that $w = s_1 \cdots s_\ell$. Recall that  $G$ satisfies the mixed-identity $w=1$ if every homomorphism from $G \ast \mathbb{Z}$ to $G$ that is identical of $G$ is trivial on $w$. Replacing $\Z$ with any non-abelian free group $\mathbb{F}_m$ yields an equivalent definition \cite[Remark 5.1]{Hull-Osin}.

% We say that $G$ satisfies a non-trivial mixed-identity if $w$ is a non-trivial element of $G \ast \mathbb{Z}$.
%\end{defin}

Given an infinite set $\Omega$, we denote by $\sym_f(\Omega)<\sym(\Omega)$ the group of permutations with finite support, and by $\mathrm{Alt}_f(\Omega)$ its alternating subgroup. The latter is simple and is the only non-trivial proper normal subgroup of $\sym_f(\Omega)$.

Hull and Osin showed in \cite{Hull-Osin} that if a group $G$ satisfies a non-trivial mixed identity, then $G$ cannot act faithfully and highly transitively on a set $\Omega$, unless $G$ contains the subgroup $\alt_f(\Omega)$.  This raises the natural question whether this is  can be strengthened to obtain that $G$ has finite transitivity degree (this question is discussed after Question 6.2  in \cite{Hull-Osin}). The purpose of this appendix is to prove that this is indeed the case.  

\begin{prop} \label{prop-mixid-td}
Let $G$ be an infinite group that satisfies a non-trivial mixed identity $w\in G\ast \Z$ of length $k$. Then exactly one of the following holds: 

\begin{enumerate}
\item \label{i-alt-normal} There exists a set $\Omega$ and an injective homomorphism $G\to \sym(\Omega)$ whose image contains $\mathrm{Alt}_f(\Omega)$. 
 \item \label{i-bound} We have $td(G)<k$.
 \end{enumerate} 
\end{prop}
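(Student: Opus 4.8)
The plan is to establish the dichotomy by showing the two alternatives are mutually exclusive and jointly exhaustive. First I would note that they cannot both hold: if alternative (\ref{i-alt-normal}) holds then the image of $G$ contains $\mathrm{Alt}_f(\Omega)$, which already acts highly transitively on the infinite set $\Omega$ (it is infinite because $G$ is and the action is faithful); hence $G$ admits a faithful highly transitive action, $td(G)=\infty$, and (\ref{i-bound}) fails. The substance is to show that \emph{at least} one holds, and for this I would isolate the single statement: if $G$ admits a faithful $k$-transitive action on a set $\Omega$ and the image does not contain $\mathrm{Alt}_f(\Omega)$, then $G$ does \emph{not} satisfy $w=1$. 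Granting this, if $td(G)\ge k$ then $G$ has a faithful $k$-transitive action whose image must contain $\mathrm{Alt}_f(\Omega)$ (otherwise the displayed statement contradicts the hypothesis that $w$ is a mixed identity), giving (\ref{i-alt-normal}); and if $td(G)<k$ we are in (\ref{i-bound}). Two degenerate shapes of $w$ are dispatched immediately: we may assume $t$ occurs in $w$, for otherwise $w$ is a non-trivial element of $G$ and $w=1$ fails outright; and $k\ge 2$, since a length-one identity $t^{\pm1}$ would force $G$ to be trivial.

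So fix a faithful $k$-transitive action $G\le\sym(\Omega)$ with $\mathrm{Alt}_f(\Omega)\not\le G$. As $k$-transitivity with $k\ge 2$ implies primitivity, the classical theorem of Jordan--Wielandt on primitive groups containing a finitary permutation applies: since $\mathrm{Alt}_f(\Omega)\not\le G$, the group $G$ contains \emph{no} non-trivial finitary permutation, i.e.\ every non-trivial element of $G$ moves infinitely many points. Write $w$ in normal form in $G\ast\langle t\rangle$ as $w=g_0 t^{m_1}g_1\cdots t^{m_r}g_r$ with $m_i\neq 0$ and $g_1,\dots,g_{r-1}\neq 1$, so that its length is $k=\sum_i|m_i|+\#\{i:g_i\neq 1\}$. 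The goal is to produce $s\in G$ and $\omega\in\Omega$ with $w(s)\,\omega\neq\omega$, where $w(s)=g_0 s^{m_1}g_1\cdots s^{m_r}g_r$.

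The construction reads the word right-to-left as a path $p_k=\omega,\ p_{k-1},\dots,\ p_0=w(s)\,\omega$, each letter determining an edge $p_{j-1}=\ell_j(p_j)$: a letter $g_i$ \emph{forces} $p_{j-1}=g_i p_j$, while each letter $t^{\pm1}$ is used to \emph{define} $s$ on a freshly chosen point. Processing one $t$-block at a time, I would take every newly introduced point to be distinct from all points seen so far; the only rigid steps are the (isolated) $g$-edges, where the forced point $g_i p_j$ must also be new. This is exactly where infinite supports enter: the point $p_j$ feeding such an edge is itself a point we are still free to choose, and choosing it inside $\mathrm{supp}(g_i)$, away from the finite set of already-used points and their $g_i$-preimages, guarantees that $g_i p_j$ is new; this is possible precisely because $\mathrm{supp}(g_i)$ is infinite. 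Carrying this out along the whole word makes $p_0,\dots,p_k$ pairwise distinct, so in particular $p_0\neq p_k=\omega$. The partial map $s$ thus defined is an injection between two tuples of at most $\sum_i|m_i|\le k$ distinct points, hence extends to an element of $G$ by $k$-transitivity, and for this $s$ one reads off the path to get $w(s)\,\omega=p_0\neq\omega$, so $w(s)\neq 1$; this contradicts $w=1$ and completes the reduction.

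The main obstacle is the collision control of the previous paragraph: the $g$-edges are rigid (the $g_i$ are fixed once $w$ is), so the whole path cannot be chosen freely, and a careless choice risks forcing $p_0=\omega$ or forcing $s$ to be ill-defined. Two ingredients defuse this: (i) the normal-form alternation, which keeps the $g$-edges isolated so that each is fed by a point still at our disposal, and (ii) the infinitude of every $\mathrm{supp}(g_i)$, supplied by Jordan--Wielandt, which leaves enough room to route each forced point to a fresh location. It is also worth recording why the bound is exactly the length $k$: the number of points on which $s$ must be prescribed equals the number of $t$-letters $\sum_i|m_i|\le k$, and it is $k$-transitivity that realises that prescription inside $G$.
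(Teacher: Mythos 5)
Your proof is correct, and at the top level it runs parallel to the paper's: assume $td(G)\geq k$, fix a faithful $k$-transitive action on $\Omega$, split according to whether the image contains $\alt_f(\Omega)$, and otherwise produce $s\in G$ --- prescribed on at most $\sum_i |m_i|\leq k$ points and realized inside $G$ by $k$-transitivity --- that moves a base point along a path of pairwise distinct points, so that $w(s)\neq 1$; mutual exclusivity is handled identically in both texts. The genuine difference is the lemma powering the dichotomy. The paper stays self-contained: if some non-trivial $g\in G$ has support of size at most $k$, then $k$-transitivity gives all $\sym_f(\Omega)$-conjugates of $g$, hence a non-trivial subgroup of $\sym_f(\Omega)$ normalized by $\sym_f(\Omega)$, hence $\alt_f(\Omega)$; in the complementary case it retains only the bound that every non-trivial element has support of size at least $k+1$. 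You instead invoke the Jordan--Wielandt theorem (legitimately: $k\geq 2$ gives primitivity), so that in the complementary case every non-trivial element of $G$ has \emph{infinite} support. This stronger input is what makes your collision control trivial, and it is a real gain rather than a stylistic one: the paper's corresponding selection step --- choosing $\omega_i\in\operatorname{supp}(g_i)$ so that the $2r\leq k$ points $\omega_i, g_i(\omega_i)$ are pairwise distinct, using only $|\operatorname{supp}(g_i)|\geq k+1$ --- is not valid for arbitrary permutations satisfying that bound: with $r=4$, all $n_i=1$ (so $k=8$) and all $g_i$ equal to a single product $g$ of three $3$-cycles (support of size $9=k+1$), at most three pairwise disjoint pairs $\{\omega, g(\omega)\}$ exist, so four cannot be chosen. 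Such a configuration cannot actually occur under the case hypotheses, but ruling it out requires precisely the Jordan--Wielandt input that you supply and the paper avoids. A second small advantage of your write-up is the normal form $w=g_0t^{m_1}g_1\cdots t^{m_r}g_r$ with possibly trivial $g_0,g_r$, which covers words such as $w=t^n$ that the paper's normalization $t^{n_r}g_r\cdots t^{n_1}g_1$ with all $g_i\neq 1$ does not reach. The price of your route is dependence on a classical but non-elementary theorem where the paper aims to be elementary; the gains are a more robust selection argument and uniform treatment of all word shapes.
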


\begin{proof}
Let $w\in G\ast \Z $ be a mixed identity of length $k$ that is satisfied by $G$, and let $t$ be a generator of $\Z$. Upon replacing  $w$ by a cyclic conjugate without increasing its length, we can suppose that $w$ is of the form  $w=t^{n_r}g_r\cdots  t^{n_1}g_1$, with $g_i\neq 1$ and $n_i\neq 0$ for $i=1,\ldots, r$, so that the length of $w$ is   $k= r+\sum |n_i|$. We proceed by assuming that $td(G)\geq k$ and showing that \eqref{i-alt-normal} must hold.  To this end, let $\Omega$ be a set on which $G$ acts faithfully $k$-transitively. Assume first that $G$ contains a non-trivial element whose support in $\Omega$  has cardinality which does not exceed $k$. By $k$-transitivity, we deduce that $G$ must contain all conjugates of $g$ in $\sym_f(\Omega)$. In particular, $G$ contains a non-trivial normal subgroup of $\sym_f(\Omega)$, and thus contains $\alt_f(\Omega)$.

Assume now that the support of every non-trivial element  $g
\in G$ has at least $k+1$ points, and let us show that this leads to a contradiction. This assumption applied to the elements $g_i$ shows that we can choose points $\omega_i$ in the support of $g_i$ in such a way that the $2r\leq k$ elements $\omega_1, g_1(\omega_1), \ldots, \omega_r, g_r(\omega_r)$ are distinct. Since $\Omega$ is infinite, for each $i$ it is possible to find $\omega'_{i, 1},\ldots, \omega'_{i, |n_i|} \in \Omega$ such that all the points \[ \omega_1, g_1(\omega_1), \omega'_{1,1},\ldots,  \omega'_{1, |n_1|}, \omega_2, g_2(\omega_2), \omega'_{2,1},\ldots,  \omega'_{2, |n_2|},  \ldots, \omega_r, g_r(\omega_r), \omega'_{r, 1},\ldots, \omega'_{r, |n_r|}\] are  distinct. Using $k$-transitivity along with the fact that $k=\sum_{i=1}^r (|n_i|+1)$, we can find $h\in G$ which verifies the following condition for every $i=1,\ldots, r$:  
\[\left\{\begin{array}{lr}  h\colon (g_i(\omega_i),  \omega'_{i, 1}, \cdots,  \omega'_{i, |n_i|}) \mapsto (  \omega'_{i, 1}, \cdots,  \omega'_{i, |n_i|}, \omega_{i+1}) & \text{if } n_i>0\\ h\colon  (  \omega'_{i, 1}, \cdots,  \omega'_{i, |n_i|}, \omega_{i+1})\mapsto (g_i(\omega_i),  \omega'_{i, 1}, \cdots,  \omega'_{i, |n_i|})   & \text{if } n_i<0 \end{array} \right. \quad \]
Observe that by construction we have $h^{n_i}g_i(\omega_i)= \omega_{i+1}$ for $i=1,\ldots, r-1$ and $h^{n_r}g_r(\omega_r)=\omega'_{r, |n_r|}$. 
Therefore if we let $\overline{w}\in G$ be the element obtained by evaluating $w$ on $t=h$, we have 
\[\overline{w}(\omega_1)=h^{n_r}g_r\cdots h^{n_1}g_1(\omega_1)= \cdots = h^{n_r}g_r\cdots h^{n_{i+1}}g_i(\omega_i)= \cdots =h^{n_r}g_r(\omega_r)=\omega'_{r, |n_r|}.\]
Since $\omega'_{r, |n_r|}\neq \omega_1$, this implies that $\overline{w}\neq 1$ in $G$, reaching a contradiction.

So we have shown that either \eqref{i-alt-normal} or \eqref{i-bound} must hold, and the two  cases are clearly mutually exclusive because in case \eqref{i-alt-normal} the group is highly transitive. 
 \qedhere
\end{proof}

\bibliographystyle{abbrv}
\bibliography{bib-trans-deg}

\begin{thebibliography}{10}

\bibitem{Bri-Hae}
M.~R. Bridson and A.~Haefliger.
\newblock {\em Metric spaces of non-positive curvature}, volume 319 of {\em
  Grundlehren der Mathematischen Wissenschaften [Fundamental Principles of
  Mathematical Sciences]}.
\newblock Springer-Verlag, Berlin, 1999.

\bibitem{BM-IHES}
M.~Burger and S.~Mozes.
\newblock Groups acting on trees: from local to global structure.
\newblock {\em Inst. Hautes \'Etudes Sci. Publ. Math.}, (92):113--150 (2001),
  2000.

\bibitem{Cameron-book}
P.~J. Cameron.
\newblock {\em Permutation groups}, volume~45 of {\em London Mathematical
  Society Student Texts}.
\newblock Cambridge University Press, Cambridge, 1999.

\bibitem{CFP}
J.~W. Cannon, W.~J. Floyd, and W.~R. Parry.
\newblock Introductory notes on {R}ichard {T}hompson's groups.
\newblock {\em Enseign. Math. (2)}, 42(3-4):215--256, 1996.

\bibitem{Chaynikov}
V.~V. Chaynikov.
\newblock {\em Properties of hyperbolic groups: {F}ree normal subgroups,
  quasiconvex subgroups and actions of maximal growth}.
\newblock ProQuest LLC, Ann Arbor, MI, 2012.
\newblock Thesis (Ph.D.)--Vanderbilt University.

\bibitem{Dixon-free}
J.~D. Dixon.
\newblock Most finitely generated permutation groups are free.
\newblock {\em Bull. London Math. Soc.}, 22(3):222--226, 1990.

\bibitem{Dixon-Mortimer}
J.~D. Dixon and B.~Mortimer.
\newblock {\em Permutation groups}, volume 163 of {\em Graduate Texts in
  Mathematics}.
\newblock Springer-Verlag, New York, 1996.

\bibitem{FLMMS}
P.~Fima, F.~Le~Ma\^{i}tre, S.~Moon, and Y.~Stalder.
\newblock High transitivity for more groups acting on trees.
\newblock {\em arXiv:2003.11116}, 2020.

\bibitem{Fim-Moon-Stal}
P.~Fima, S.~Moon, and Y.~Stalder.
\newblock Highly transitive actions of groups acting on trees.
\newblock {\em Proc. Amer. Math. Soc.}, 143(12):5083--5095, 2015.

\bibitem{Garion-Glasner}
S.~Garion and Y.~Glasner.
\newblock Highly transitive actions of {${\rm Out}(F_n)$}.
\newblock {\em Groups Geom. Dyn.}, 7(2):357--376, 2013.

\bibitem{Gel-Glas-prim}
T.~Gelander and Y.~Glasner.
\newblock Countable primitive groups.
\newblock {\em Geom. Funct. Anal.}, 17(5):1479--1523, 2008.

\bibitem{Ghys-circ}
E.~Ghys.
\newblock Groups acting on the circle.
\newblock {\em Enseign. Math. (2)}, 47(3-4):329--407, 2001.

\bibitem{Gla-Wei}
E.~Glasner and B.~Weiss.
\newblock Uniformly recurrent subgroups.
\newblock In {\em Recent trends in ergodic theory and dynamical systems},
  volume 631 of {\em Contemp. Math.}, pages 63--75. Amer. Math. Soc.,
  Providence, RI, 2015.

\bibitem{Glas-Gulk-lin}
Y.~Glasner and D.~D. Gulko.
\newblock Sharply 2-transitive linear groups.
\newblock {\em Int. Math. Res. Not. IMRN}, (10):2691--2701, 2014.

\bibitem{Golan-Sapir-JS}
G.~Golan and M.~Sapir.
\newblock On {J}ones' subgroup of {R}. {T}hompson group {$F$}.
\newblock {\em J. Algebra}, 470:122--159, 2017.

\bibitem{Golan-Sapir-max}
G.~Golan and M.~Sapir.
\newblock On subgroups of {R}. {T}hompson's group {$F$}.
\newblock {\em Trans. Amer. Math. Soc.}, 369(12):8857--8878, 2017.

\bibitem{Gunhouse-freeprod}
S.~V. Gunhouse.
\newblock Highly transitive representations of free products on the natural
  numbers.
\newblock {\em Arch. Math. (Basel)}, 58(5):435--443, 1992.

\bibitem{Hickin-freeprod}
K.~K. Hickin.
\newblock Highly transitive {J}ordan representations of free products.
\newblock {\em J. London Math. Soc. (2)}, 46(1):81--91, 1992.

\bibitem{Hull-Osin}
M.~Hull and D.~Osin.
\newblock Transitivity degrees of countable groups and acylindrical
  hyperbolicity.
\newblock {\em Israel J. Math.}, 216(1):307--353, 2016.

\bibitem{Ju-Mo}
K.~Juschenko and N.~Monod.
\newblock Cantor systems, piecewise translations and simple amenable groups.
\newblock {\em Ann. of Math. (2)}, 178(2):775--787, 2013.

\bibitem{Kitroser-surface}
D.~Kitroser.
\newblock Highly-transitive actions of surface groups.
\newblock {\em Proc. Amer. Math. Soc.}, 140(10):3365--3375, 2012.

\bibitem{LB-ae}
A.~Le~Boudec.
\newblock Groups acting on trees with almost prescribed local action.
\newblock {\em Comment. Math. Helv.}, 91(2):253--293, 2016.

\bibitem{LB-cs}
A.~Le~Boudec.
\newblock {$C^*$}-simplicity and the amenable radical.
\newblock {\em Invent. Math.}, 209(1):159--174, 2017.

\bibitem{Mal-class}
A.~V. Malyutin.
\newblock Classification of group actions on the line and the circle.
\newblock {\em Algebra i Analiz}, 19(2):156--182, 2007.

\bibitem{Mar-Tits}
G.~Margulis.
\newblock Free subgroups of the homeomorphism group of the circle.
\newblock {\em C. R. Acad. Sci. Paris S\'{e}r. I Math.}, 331(9):669--674, 2000.

\bibitem{Mat-SFT}
H.~Matui.
\newblock Topological full groups of one-sided shifts of finite type.
\newblock {\em J. Reine Angew. Math.}, 705:35--84, 2015.

\bibitem{McDonough-free}
T.~P. McDonough.
\newblock A permutation representation of a free group.
\newblock {\em Quart. J. Math. Oxford Ser. (2)}, 28(111):353--356, 1977.

\bibitem{Monod-Pw}
N.~Monod.
\newblock Groups of piecewise projective homeomorphisms.
\newblock {\em Proc. Natl. Acad. Sci. USA}, 110(12):4524--4527, 2013.

\bibitem{Moon-Stalder}
S.~Moon and Y.~Stalder.
\newblock Highly transitive actions of free products.
\newblock {\em Algebr. Geom. Topol.}, 13(1):589--607, 2013.

\bibitem{Nek-frag}
V.~Nekrashevych.
\newblock Palindromic subshifts and simple periodic groups of intermediate
  growth.
\newblock {\em Ann. of Math. (2)}, 187(3):667--719, 2018.

\bibitem{Nek-simple}
V.~Nekrashevych.
\newblock Simple groups of dynamical origin.
\newblock {\em Ergodic Theory Dynam. Systems}, 39(3):707--732, 2019.

\bibitem{Olshanskii-free}
A.~Y. Olshanskii.
\newblock On pairs of finitely generated subgroups in free groups.
\newblock {\em Proc. Amer. Math. Soc.}, 143(10):4177--4188, 2015.

\bibitem{Pa-Va}
I.~Pays and A.~Valette.
\newblock Sous-groupes libres dans les groupes d'automorphismes d'arbres.
\newblock {\em Enseign. Math. (2)}, 37(1-2):151--174, 1991.

\bibitem{RST-2trans}
E.~Rips, Y.~Segev, and K.~Tent.
\newblock A sharply 2-transitive group without a non-trivial abelian normal
  subgroup.
\newblock {\em J. Eur. Math. Soc. (JEMS)}, 19(10):2895--2910, 2017.

\bibitem{Rips-Tent-2trans}
E.~Rips and K.~Tent.
\newblock Sharply 2-transitive groups of characteristic 0.
\newblock {\em J. Reine Angew. Math.}, 750:227--238, 2019.

\bibitem{Tent-3trans}
K.~Tent.
\newblock Sharply 3-transitive groups.
\newblock {\em Adv. Math.}, 286:722--728, 2016.

\bibitem{Tits}
J.~Tits.
\newblock Sur le groupe des automorphismes d'un arbre.
\newblock In {\em Essays on topology and related topics ({M}\'emoires
  d\'edi\'es \`a {G}eorges de {R}ham)}, pages 188--211. Springer, New York,
  1970.

\bibitem{Tits-oeuvres}
J.~Tits.
\newblock {\em \OE uvres/{C}ollected works. {V}ol. {I}, {II}, {III}, {IV}}.
\newblock Heritage of European Mathematics. European Mathematical Society
  (EMS), Z\"{u}rich, 2013.
\newblock Edited by Francis Buekenhout, Bernhard Matthias M\"{u}hlherr,
  Jean-Pierre Tignol and Hendrik Van Maldeghem.

\bibitem{Wielandt-book}
H.~Wielandt.
\newblock {\em Finite permutation groups}.
\newblock Translated from the German by R. Bercov. Academic Press, New
  York-London, 1964.

\end{thebibliography}

\end{document}